\theoremstyle{plain}
\newtheorem{theorem}{Theorem}[section]
\newtheorem{corollary}[theorem]{Corollary}
\newtheorem{lemma}[theorem]{Lemma}
\newtheorem{proposition}[theorem]{Proposition}
\theoremstyle{definition}
\newtheorem{definition}[theorem]{Definition}
\theoremstyle{remark}
\newtheorem{remark}[theorem]{Remark}
\newcommand{\ep}{\varepsilon}
\newcommand*\di{\mathop{}\!\mathrm{d}}
\newcommand{\bb}[1]{\mathbb{#1}}
\newcommand{\al}[1]{\mathcal{#1}}
\newcommand{\RomanNumeralCaps}[1]
    {\MakeUppercase{\romannumeral #1}}
\begin{document}
\title[Wasserstein geometry of nonnegative measures on finite Markov chains II]{Wasserstein geometry of nonnegative measures on finite Markov chains II: Geodesic and duality formulae}

\author[Q. Mao, X. Wang, and X. Xue]{Qifan Mao, Xinyu Wang, and Xiaoping Xue$^{*}$}

\address{School of Mathematics, Harbin Institute of Technology, Harbin  150001, People's Republic of China}
\email{qifanmao@stu.hit.edu.cn}
\email{wangxinyumath@hit.edu.cn}
\email{xiaopingxue@hit.edu.cn}
\begin{abstract}
In this paper, we investigate the geodesic structure and the associated Kantorovich-type duality for a Benamou–Brenier-type transportation metric defined on the space of nonnegative measures over a finite reversible Markov chain. 
The metric is introduced through a dynamic formulation that combines transport and source costs along solutions of a nonconservative continuity equation, where mass variation is constrained to occur along a fixed strictly positive reference direction. 
We show that geodesics associated with this metric exhibit a \textit{non-locality} property: almost every time, they are supported on the whole state space, independently of the choice of endpoints. 
Moreover, along optimal curves, the source term displays a characteristic temporal profile, with mass creation occurring at early times and subsequent decay as the curve approaches the target measure. 
As an application of this property, we compare our metric with the shift–transport distance and prove that the latter is always bounded above by our metric. 
Finally, we establish a Kantorovich-type duality formula in terms of Hamilton–Jacobi subsolutions, which provides a characterization of the metric and highlights the role of the momentum associated with geodesic curves.
\end{abstract}
\subjclass{49Q22,60J27} 
\keywords{Benamou--Brenier formula, geodesic properties, Hamilton-Jacobi subsolutions}
\thanks{$^{*}$Corresponding author. }
\maketitle

\tableofcontents
\setcounter{equation}{0}

\section{Introduction}
Moving mass on graphs and networks is ubiquitous in diffusion on discrete structures, 
    transportation and logistics, and population dynamics on contact networks \cite{lovasz1993random,chung1997spectral,farahani2013review,pastor2015epidemic}. 
In many such models, however, total mass is not conserved. 
Nodes may create or absorb mass, external inflows and outflows may act at selected locations, 
    and reaction or birth--death mechanisms may change the overall amount of mass, 
    as is standard in epidemic and population models and in reaction--diffusion systems on graphs \cite{pastor2015epidemic,zuniga2020reaction,weber2006multicomponent}. 
This raises a basic geometric question: how to compare and interpolate nonnegative measures on a graph when both transport and mass variation are intrinsic to the dynamics.

Classical optimal transport provides two guiding viewpoints. 
The Benamou--Brenier formulation turns the quadratic Wasserstein distance into a convex action minimization along time-dependent flows under a continuity equation \cite{benamou2000computational}, 
    while Otto’s calculus interprets diffusion equations as gradient flows of entropy in Wasserstein space \cite{otto2001geometry,ambrosio2005gradient}. 
On graphs, discrete analogues have been developed on the probability simplex, most notably the discrete transport metric and its geometric consequences such as entropy convexity and discrete curvature notions \cite{maas2011gradient,erbar2012ricci,mielke2013geodesic,erbar2019geometry}.
Recent development also extends the corresponding optimal transport theory to graphon and infinite graphs \cite{erbar2014gradient,warren2024gradient,CarrilloWang2025}.
These conservative formulations are tailored to probability measures and therefore do not directly capture the nonconservative mechanisms above.

In parallel, a systematic theory of unbalanced transport on nonnegative measures has emerged in the continuum, 
    including generalized Wasserstein distances, optimal entropy--transport, and the Hellinger--Kantorovich geometry, all featuring dynamic Benamou--Brenier descriptions that couple transport with reaction or source terms \cite{piccoli2014generalized,piccoli2016properties,liero2018optimal,chizat2018interpolating,kondratyev2016new}. 
For graph-based unequal-mass data, recent work provides scalable computational tools \cite{le2023scalable,keriven2023entropic}, 
    yet an intrinsic, graph-adapted geometric framework that simultaneously reflects discrete calculus and accommodates nonconservative effects remains to be developed. 
This motivates a Benamou--Brenier type transportation metric on finite graphs for general nonnegative measures, 
    designed to encode both transport along edges and mass variation driven by sources.

For this purpose, we introduced a Benamou--Brenier type metric on the space of nonnegative measure in our previous work \cite{mao2026wasserstein}. More precisely, 
    on a finite reversible Markov chain $\al X$,
    given $\mu_0,\mu_1$ in the set of all nonnegative measures $\al M$, we introduce their Benamou--Brenier type distance by
    \begin{equation*}
    \begin{gathered}
    \al W_{p}^{a,b}(\mu_0,\mu_1)^{2}
    := \inf\left\{
    a^{2}\!\int_{0}^{1} h_t^{2}\di t
    \;+\;
    b^{2}\!\int_{0}^{1} \|\nabla \psi_t\|_{\mu_t}^{2}\di t
    \right\},
    \end{gathered}
    \end{equation*}
    where the infimum is taken over curves $t\mapsto \mu_t$ connecting $\mu_0$ to $\mu_1$, 
    potentials $\psi_t$, and scalar source rates $h_t$ such that
    \begin{equation*}
    \left\{
    \begin{aligned}
    & \dot{\mu}_t + \nabla\!\cdot\big(\,\hat{\mu}_t * \nabla \psi_t\,\big) \;=\; h_t\,p, \\
    & \mu_{t=0}=\mu_0,\qquad \mu_{t=1}=\mu_1 .
    \end{aligned}
    \right.
    \end{equation*}
Here $\|\nabla \psi_t\|_{\mu_t}$ denotes the graph--weighted flux norm associated with $\mu_t$, 
    $\hat{\mu}_t$ is the mobility corresponding to $\mu_t$, 
    and $\ast$ is the Hadamard product, namely 
    $(\hat{\mu}_t \ast \nabla \psi_t)(x,y)=\hat{\mu}_t(x,y)\,\nabla \psi_t(x,y)$.
    The parameters $a>0$ and $b>0$ balance the source and transport contributions.
And we prescribe the \emph{direction} of mass variation by a fixed strictly positive probability vector $p$.
In \cite{mao2026wasserstein}, 
    we proved that on the strictly positive cone, 
    the induced Riemannian structure
    aligns the length distance with $\al W^{a,b}_p$,  
    and we identified the generalized heat equation as the entropy gradient flow, 
    with global well-posedness and exponential convergence to equilibrium. However, the geodesic and duality formulae of our Benamou--Brenier type distance need to be further explored.
\newline

For the conservative discrete dynamical transport metric on the probability simplex that originally motivated our construction, 
    Erbar, Maas, and Wirth investigated in \cite{erbar2019geometry} a basic locality problem for geodesics. 
Given a subset $\al Y\subset \al X$ and two probability measures $\rho_0,\rho_1$ supported in $\al Y$, 
    they asked whether one can find a constant-speed geodesic connecting $\rho_0$ to $\rho_1$ that remains supported in $\al Y$ for all times. 
They termed this property \emph{weak locality}, 
and proved that it holds whenever the induced subgraph $\al Y$ satisfies a suitable structural condition, 
    the \emph{retraction property}. 
    In the present nonconservative setting, where mass variation is encoded by a source term constrained to a fixed fully supported direction $p$, it is natural to ask whether an analogous weak locality property persists, despite the coupling between transport and source:
\begin{itemize}
\item[(Q1)] Does $\al W^{a,b}_p$ satisfy such a weak locality principle?
\end{itemize}
A second, complementary question concerns a dual characterization of $\al W^{a,b}_p$. In the conservative setting, Kantorovich-type duality links the distance to Hamilton--Jacobi inequalities and provides a variational certificate for optimality. In the present nonconservative framework, one may ask:
\begin{itemize}
\item[(Q2)] Can $\al W^{a,b}_p$ be characterized by a dual formula involving Hamilton--Jacobi subsolutions that govern the momentum of geodesic curves?
\end{itemize}

\vspace{0.2cm}

The purpose of this paper is to answer the above questions (Q1) and (Q2). 
Regarding (Q1), we provide a negative answer in Section \ref{sec_geodesic}.
We first show that $\al W^{a,b}_p$ defines a geodesic metric on $\mathcal M$:
    the action infimum is always attained, 
    and any pair of endpoints is joined by a constant-speed geodesic $t\mapsto\mu_t$.
We then exploit the monotonicity of the logarithmic mean appearing in the edge mobility $\hat\mu$
    and the strict positivity of the reference direction $p$
    to uncover a structure of optimal trajectories.
First, the source amplitude $t\mapsto h_t$ is monotone nonincreasing,
    so that mass is typically created at early times and then decreases as the curve approaches the target measure
    (Lemma~\ref{lem_frontload}).
Second, $(h_t)_{t\in[0,1]}$ cannot vanish on a time set of positive measure
    (Lemma~\ref{lem:no_zero_platform}).
Combining these properties, 
    we prove in Theorem~\ref{thm_a.e._not_in_boundary} that geodesics are,
    for almost every time, supported on the whole chain:
    if $(\mu_t)_{t\in[0,1]}$ is a constant-speed $\al W_p^{a,b}$-geodesic, then
    \begin{equation*}
    \operatorname{supp}(\mu_t)=\al X,
    \qquad\text{for a.e.\ }t\in(0,1).
    \end{equation*}
Since this non-locality phenomenon occurs for arbitrary endpoints, 
    it rules out any weak locality principle
    in the sense of \cite{erbar2019geometry}, thus answering (Q1) in the negative.
As an application of the strict-positivity result, 
    we compare $\al W^{a,b}_p$ with the discrete shift--transport distance
    in the spirit of Rossi--Piccoli \cite{piccoli2014generalized,piccoli2016properties},
    and obtain an explicit upper bound together with a strictness criterion
    (Theorem~\ref{thm_comparision_RP}).

We further conjecture that if both endpoints are fully supported on $\al X$, then the entire geodesic remains fully supported for all $t\in(0,1)$. Conditional on this conjecture, we establish uniqueness of geodesics in Proposition~\ref{prop_conditioned_uniqueness} and derive a weak formulation of the geodesic equations in Theorem~\ref{thm:GE-weak}.
We also include a numerical experiment illustrating the shape of geodesics in Remark~\ref{rmk_numerical}, which provides supporting evidence for the conjectured strict positivity.
\newline

Regarding (Q2), in Section \ref{sec_duality} we establish a Kantorovich-type duality for $\al W^{a,b}_p$
    in terms of Hamilton--Jacobi subsolutions.
We introduce the class $\mathsf{HJ}^{1,b}_{\al X}$ of potentials $\varphi\in \mathrm H^{1}((0,1);\mathbb R^{X})$
    satisfying the discrete Hamilton--Jacobi inequality
    \begin{equation*}
    \langle \dot\varphi_t,\mu\rangle_{\varpi}
    +\frac{1}{2b^{2}}\|\nabla\varphi_t\|_{\mu}^{2}
    \le 0,
    \qquad
    \text{for a.e.\ }t\in(0,1)\text{ and for all }\mu\in\mathcal M,
    \end{equation*}
    which serves as a graph-adapted analogue of the usual subsolution condition in the conservative theory.
We then prove in Theorem~\ref{thm_duality} the dual representation
    \begin{equation*}
    \frac12\,\al W^{a,b}_p(\mu_0,\mu_1)^{2}
    =
    \sup\left\{
    \langle \varphi_1,\mu_1\rangle_{\varpi}
    -
    \langle \varphi_0,\mu_0\rangle_{\varpi}
    -
    \frac{1}{2a^{2}}\int_{0}^{1}\langle \varphi_t,p\rangle_{\varpi}^{2}\di t
    \;\middle|\;
    \varphi\in \mathsf{HJ}^{1,b}_{\al X}
    \right\},
    \end{equation*}
    and show that the supremum restricted to $\varphi\in \mathrm C^{1}([0,1];\mathbb R^{X})$
    still satisfying the same Hamilton--Jacobi inequality.
The additional penalty term involving $\langle \varphi_t,p\rangle_{\varpi}$
    encodes the constrained source direction and is the only modification of the classical quadratic duality.
Our proof follows a convex-analytic route:
    we recast the primal action problem as a constrained convex minimization in flux--source variables,
    apply a minimax argument together with Fenchel--Rockafellar duality,
    and identify the dual constraint precisely as the Hamilton--Jacobi subsolution condition.
\newline 

The paper is organized as follows.
Section~\ref{sec_preliminaries} introduces the reversible Markov chain framework and recalls the main definitions and basic facts from \cite{mao2026wasserstein}.
In Section~\ref{sec_geodesic}, we show that $(\al M,\al W^{a,b}_p)$ is a geodesic metric space, and we analyze the resulting nonlocal features of geodesics.
Section~\ref{sec_duality} establishes the Hamilton--Jacobi-based Kantorovich duality formula for $\al W^{a,b}_p$.
Finally, Section~\ref{sec_conc} summarizes the main findings and discusses several open questions and directions for future work.

\section{Preliminaries}
\label{sec_preliminaries}

In this section, we first introduce the Markov chain and several notations, including discrete divergence, inner product, logarithmic mean, and so on. Then, we recall our Benamou--Brenier-type metric introduce in \cite{mao2026wasserstein}.

\subsection{Basic notations}\label{sec:2.1}
Throughout the paper we work with a finite reversible Markov chain and follow the notational conventions of \cite{maas2011gradient,erbar2012ricci}.
Let $\al X$ be a finite set with $|\al X|=N$, and let $K:\al X\times \al X\to \bb R_+$ be an irreducible Markov kernel.
Standard Markov chain theory ensures the existence of a unique invariant distribution $\varpi:\al X\to \bb R_+$, namely
\begin{equation*}
\varpi(x)=\sum_{y\in \al X}\varpi(y)K(y,x),
\ \forall~x\in \al X,
\qquad
\text{and}\qquad
\sum_{y\in \al X}\varpi(y)=1.
\end{equation*}
We assume that $\varpi$ is reversible for $K$, in the sense that
\begin{equation}
\label{eq_markov_reversible}
K(x,y)\varpi(x)=K(y,x)\varpi(y),
\quad \forall~x,y\in \al X.
\end{equation}
We denote by
\begin{equation*}
\al M=\al M(\al X):=\{\mu:\al X\to \bb R \mid \mu(x)\ge 0 \text{ for all }x\in \al X\}
\end{equation*}
the cone of nonnegative densities, and we write
\begin{equation*}
\al M_+=\al M_+(\al X):=\{\mu:\al X\to \bb R \mid \mu(x)>0 \text{ for all }x\in \al X\}
\end{equation*}
for its strictly positive part.

Since any real-valued map on $\al X$ is simply an array of length $N$,
we use $\bb R^{\al X}$ and $\bb R^{\al X\times \al X}$ for real-valued functions on vertices and on ordered pairs,
and freely identify them with $\bb R^{N}$ and $\bb R^{N\times N}$.
We write $[\cdot,\cdot]$ for the Euclidean inner product and $|\cdot|_2$ for the Euclidean norm.

For $\psi\in \bb R^{\al X}$ we define its $\mathrm L^1$-norm with respect to $\varpi$ by
\begin{equation*}
\label{L_1norm}
\|\psi\|_{\varpi,1}:=[|\psi|,\varpi]=\sum_{x\in \al X}\varpi(x)\,|\psi(x)|.
\end{equation*}
For $\mu\in \al M$, the quantity $\|\mu\|_{\varpi,1}$ is interpreted as the total mass of $\mu$.
The discrete gradient of $\psi$ is
\begin{equation*}
\nabla\psi(x,y):=\psi(y)-\psi(x),
\quad \forall~x,y\in \al X.
\end{equation*}
Given $\Psi:\al X\times \al X\to \bb R$, its discrete divergence is
\begin{equation*}
(\nabla\cdot \Psi)(x):=\frac12\sum_{y\in \al X}\big(\Psi(x,y)-\Psi(y,x)\big)K(x,y),
\quad \forall~x\in \al X.
\end{equation*}

We equip $\bb R^{\al X}$ and $\bb R^{\al X\times \al X}$ with the $\varpi$-weighted bilinear forms
\begin{equation*}
\begin{gathered}
\langle \varphi,\psi\rangle_{\varpi}:=\sum_{x\in \al X}\varphi(x)\psi(x)\varpi(x),\\
\langle \Phi,\Psi\rangle_{\varpi}:=\frac12\sum_{x,y\in \al X}\Phi(x,y)\Psi(x,y)K(x,y)\varpi(x),
\end{gathered}
\end{equation*}
for $\varphi,\psi\in \bb R^{\al X}$ and $\Phi,\Psi\in \bb R^{\al X\times \al X}$.
With these conventions one has the integration by parts identity
\begin{equation}
\label{eq_int_by_parts}
\langle \nabla\psi,\Psi\rangle_{\varpi}
= -\langle \psi,\nabla\cdot \Psi\rangle_{\varpi}.
\end{equation}

The logarithmic mean $\theta$ is defined for $u,v\in \bb R_+$ by
\begin{equation}
\label{eq_logarithmic_mean}
\theta(u,v):=\int_0^1 u^\xi v^{1-\xi}\di \xi
=
\left\{
\begin{aligned}
& v, && \text{if } u=v, \\
& \frac{u-v}{\log u -\log v}, && \text{if } u\neq v \text{ and } u,v>0, \\
& 0, && \text{if } u=0 \text{ or } v=0.
\end{aligned}
\right.
\end{equation}
It is nonnegative and symmetric, and it is strictly increasing in each argument on $(0,\infty)\times(0,\infty)$.
Its reciprocal admits the representation
\begin{equation}
\label{eq_logarithmic_mean_reciprocal}
\theta(u,v)^{-1}
=
\int_0^1 \frac{1}{\xi u + (1-\xi)v}\di \xi
=
\left\{
\begin{aligned}
& \frac{1}{v}, && \text{if } u=v, \\
& \frac{\log u -\log v}{u-v}, && \text{if } u\neq v \text{ and } u,v>0, \\
& \infty, && \text{if } u=0 \text{ or } v=0.
\end{aligned}
\right.
\end{equation}
Although \cite{maas2011gradient,erbar2012ricci} allow more general choices of $\theta$,
the logarithmic mean is particularly suited to entropy-related arguments.
We therefore fix \eqref{eq_logarithmic_mean} throughout, while keeping the notation $\theta$.

For $\mu\in \al M$ we define the associated mobility $\hat\mu:\al X\times \al X\to \bb R$ by
\begin{equation*}
\label{eq_hatmu}
\hat\mu(x,y):=\theta\big(\mu(x),\mu(y)\big),
\quad \forall~x,y\in \al X.
\end{equation*}
Using $\hat\mu$, we introduce the $\mu$-dependent bilinear form on $\bb R^{\al X\times \al X}$,
\begin{equation}
\label{inner_mu}
\langle \Phi,\Psi\rangle_{\mu}
:=\langle \Phi,\hat\mu*\Psi\rangle_{\varpi}
=
\frac12\sum_{x,y\in \al X}\Phi(x,y)\Psi(x,y)\hat\mu(x,y)K(x,y)\varpi(x),
\end{equation}
where $(\hat\mu*\Psi)(x,y)=\hat\mu(x,y)\Psi(x,y)$ denotes the Hadamard product.
The induced seminorm is
\begin{equation*}
\label{eq_mu_2norm}
\|\Phi\|_{\mu}:=\sqrt{\langle \Phi,\Phi\rangle_{\mu}}.
\end{equation*}

We impose an equivalence relation on $\bb R^{\al X\times \al X}$ by declaring $\Phi\sim \Psi$
if and only if $\Phi(x,y)=\Psi(x,y)$ for every pair $(x,y)$ such that $\hat\mu(x,y)K(x,y)>0$.
We denote by $\mathscr G_\mu$ the corresponding space of equivalence classes.
In the degenerate case where $\hat\mu(x,y)K(x,y)=0$ for all $(x,y)$,
the quotient $\mathscr G_\mu$ is a singleton.
When no confusion arises, we use the same symbol for an element of $\bb R^{\al X\times \al X}$
and for its class in $\mathscr G_\mu$.
Since \eqref{inner_mu} depends only on values on pairs with $\hat\mu(x,y)K(x,y)>0$,
it descends to an inner product on $\mathscr G_\mu$.
In the finite-state setting, $\mathscr G_\mu$ is therefore a Hilbert space.

Let $\mathrm L^2(\al X,\varpi)$ denote $\bb R^{\al X}$ endowed with $\langle\cdot,\cdot\rangle_{\varpi}$.
The gradient map $\nabla:\mathrm L^2(\al X,\varpi)\to \bb R^{\al X\times \al X}$
descends, via the $\mu$-dependent quotient defining $\mathscr G_\mu$,
to a well-defined linear operator $\nabla:\mathrm L^2(\al X,\varpi)\to \mathscr G_\mu$,
and we henceforth regard $\nabla\psi$ as an element of $\mathscr G_\mu$.
The negative adjoint of $\nabla$ is the $\mu$-divergence operator
$\nabla_\mu\cdot:\mathscr G_\mu\to \mathrm L^2(\al X,\varpi)$, defined by
\begin{equation*}
(\nabla_\mu\cdot \Psi)(x)
:=(\nabla\cdot(\hat\mu*\Psi))(x)
=\frac12\sum_{y\in \al X}\big(\Psi(x,y)-\Psi(y,x)\big)\hat\mu(x,y)K(x,y),
\quad \forall~x\in \al X.
\end{equation*}

\subsection{Benamou--Brenier-type metric}\label{sec:2.2}
We recall from \cite{mao2026wasserstein} the Benamou--Brenier-type distance on $\al M$,
defined as the infimum of a dynamic action over admissible curves.
Throughout we fix parameters $a,b>0$, and we also fix a reference density $p\in \al M_+$
with $\|p\|_{\varpi,1}=1$, so that $p$ is a probability density.

\begin{definition}[Benamou--Brenier-type metric on $\al M$]
\label{def_Wpab}
For $\mu_0,\mu_1\in \al M$, we set
\begin{equation}
\label{eq_def_w}
\al W^{a,b}_p(\mu_0,\mu_1)^2
:=
\inf\left\{
E^{a,b}_{\rm Quad}\big((\mu_t,\psi_t,h_t)_{t\in[0,1]}\big)
\ \middle|\
(\mu_t,\psi_t,h_t)_{t\in[0,1]}\in \mathrm{CE}_p(\mu_0,\mu_1;[0,1])
\right\}.
\end{equation}
Here $\mathrm{CE}_p(\mu_0,\mu_1;[0,1])$ denotes the collection of all triples $(\mu_t,\psi_t,h_t)_{t\in[0,1]}$ such that
\begin{itemize}
\item[$\circ$] the triple is sufficiently regular, with $\mu_t\in \mathrm{A\!C}([0,1];\al M)$,
$\psi_t:[0,1]\to \bb R^{\al X}$ measurable, and $h_t:[0,1]\to \bb R$ measurable;
\item[$\circ$] it satisfies the continuity equation, namely for a.e.\ $t\in[0,1]$ and all $x\in \al X$,
\begin{equation}
\label{eq_ce}
\dot\mu_t(x)
+
\sum_{y\in \al X}\big(\psi_t(y)-\psi_t(x)\big)\hat\mu_t(x,y)K(x,y)
=
h_t\,p(x),
\end{equation}
or, equivalently,
\begin{equation}
\label{eq_ce_discrete_calculus}
\dot\mu_t + \nabla_{\!\mu_t}\!\cdot\nabla\psi_t = h_t\,p,
\qquad
\text{or}
\qquad
\dot\mu_t + \nabla\!\cdot(\hat\mu_t*\nabla\psi_t)=h_t\,p;
\end{equation}
\item[$\circ$] it meets the endpoint constraints
\begin{equation*}
\mu_t|_{t=0}=\mu_0,
\qquad
\mu_t|_{t=1}=\mu_1.
\end{equation*}
\end{itemize}
The action functional $E^{a,b}_{\rm Quad}$ is defined by
\begin{equation}
\label{eq_def_E2_origin}
E^{a,b}_{\rm Quad}\big((\mu_t,\psi_t,h_t)_{t\in[0,1]}\big)
:=
a^2\int_0^1 h_t^2\,\di t
+
b^2\int_0^1
\frac12\sum_{x,y\in \al X}\big(\psi_t(y)-\psi_t(x)\big)^2
\hat\mu_t(x,y)K(x,y)\varpi(x)\,\di t.
\end{equation}
The label ``Quad'' refers to the quadratic time integrand.
In the discrete-calculus notation, \eqref{eq_def_E2_origin} can be rewritten as
\begin{equation}
\label{eq_def_E2_discrete_calculus}
\begin{gathered}
E^{a,b}_{\rm Quad}\big((\mu_t,\psi_t,h_t)_{t\in[0,1]}\big)
=
a^2\int_0^1 h_t^2\,\di t
+
b^2\int_0^1 \|\nabla\psi_t\|_{\mu_t}^2\,\di t
=
a^2\int_0^1 h_t^2\,\di t
+
b^2\int_0^1 \langle \nabla\psi_t,\nabla\psi_t\rangle_{\mu_t}\,\di t.
\end{gathered}
\end{equation}
\end{definition}

We verified the axioms of metric for $\al W_p^{a,b}$ on $\al M$ in \cite[Theorem 3.10]{mao2026wasserstein}.
\begin{theorem}
$(\al M,\al W^{a,b}_p)$ is a metric space.
\end{theorem}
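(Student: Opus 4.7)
The plan is to check the four metric axioms for $\mathcal W_p^{a,b}$ on $\mathcal M$: non-negativity together with vanishing on the diagonal, symmetry, the separation implication $\mathcal W_p^{a,b}(\mu_0,\mu_1)=0\Rightarrow\mu_0=\mu_1$, the triangle inequality, and finiteness. Non-negativity and $\mathcal W_p^{a,b}(\mu,\mu)=0$ are immediate: the integrand of \eqref{eq_def_E2_origin} is pointwise nonnegative, and the constant triple $(\mu,0,0)$ lies in $\mathrm{CE}_p(\mu,\mu;[0,1])$. Symmetry would follow from the time-reversal ansatz $(\tilde\mu_t,\tilde\psi_t,\tilde h_t):=(\mu_{1-t},-\psi_{1-t},-h_{1-t})$: a direct substitution into \eqref{eq_ce_discrete_calculus} confirms admissibility, and both integrands in \eqref{eq_def_E2_discrete_calculus} are invariant under the reversal because they are quadratic in $\psi$ and $h$.

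For finiteness I would exploit that $p\in\mathcal M_+$. Pick $C>0$ large enough that $\mu_0+Cp,\mu_1+Cp\in\mathcal M_+$. On $[0,\tfrac13]$ and $[\tfrac23,1]$ use pure-source segments with $\psi\equiv 0$ and $h\equiv\pm 3C$, implementing $\mu_0\leadsto\mu_0+Cp$ and $\mu_1+Cp\leadsto\mu_1$ at bounded cost. On the middle interval the two endpoints are fully supported, so along the linear interpolation the mobility $\hat\mu_t$ is uniformly bounded below; the operator $\nabla_{\!\mu_t}\!\cdot\nabla$ then has full range on the zero-mean subspace of $\mathbb R^{\al X}$, and one can solve \eqref{eq_ce_discrete_calculus} with a constant $h_t$ absorbing the mass difference and a bounded $\psi_t$, giving finite total action.

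For separation I would argue by testing against an arbitrary $\varphi\in\mathbb R^{\al X}$. Along any admissible $(\mu_t,\psi_t,h_t)$, pairing \eqref{eq_ce_discrete_calculus} with $\varphi$ and invoking \eqref{eq_int_by_parts} yields
\begin{equation*}
\langle\varphi,\mu_1-\mu_0\rangle_{\varpi}
=-\int_0^1\langle\nabla\varphi,\nabla\psi_t\rangle_{\mu_t}\di t
+\langle\varphi,p\rangle_{\varpi}\int_0^1 h_t\di t.
\end{equation*}
Two applications of Cauchy--Schwarz (in time and in $\langle\cdot,\cdot\rangle_{\mu_t}$) bound the right-hand side by a constant multiple of $\sqrt{E^{a,b}_{\mathrm{Quad}}}$, the constant depending on $\varphi$ and on an $\mathrm L^\infty$-bound for $\mu_t$; such a bound is available because $\|\mu_t\|_{\varpi,1}=\|\mu_0\|_{\varpi,1}+\int_0^t h_s\di s$ is itself controlled by the action. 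Feeding in a minimizing sequence whose action tends to $0$ forces $\langle\varphi,\mu_1-\mu_0\rangle_{\varpi}=0$ for every $\varphi$, hence $\mu_0=\mu_1$.

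For the triangle inequality I would use the standard Benamou--Brenier concatenation. Given admissible curves $\mu^1$ from $\mu_0$ to $\mu_1$ and $\mu^2$ from $\mu_1$ to $\mu_2$ with actions $E_1,E_2$, reparametrise them onto $[0,s]$ and $[s,1]$ while multiplying the corresponding $\psi$ and $h$ by $1/s$ and $1/(1-s)$, so that \eqref{eq_ce_discrete_calculus} still holds on each piece. The total action becomes $E_1/s+E_2/(1-s)$; optimizing in $s\in(0,1)$ gives $(\sqrt{E_1}+\sqrt{E_2})^2$, and taking infima yields the triangle inequality. The only genuinely delicate step is finiteness, since when $\mu_0$ or $\mu_1$ fails to be fully supported the mobility degenerates and $\nabla_{\!\mu}\!\cdot\nabla$ drops rank; the hypothesis $p\in\mathcal M_+$ is precisely what lets us detour through $\mathcal M_+$ via pure-source segments and thereby close the argument.
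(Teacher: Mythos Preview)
The paper does not prove this theorem here; it simply cites \cite[Theorem 3.10]{mao2026wasserstein} from the companion paper. Your sketch is therefore not competing with a proof in this paper, and it follows exactly the standard route one would expect for a Benamou--Brenier-type metric: trivial nonnegativity and diagonal, time-reversal for symmetry, concatenation-and-reparametrization for the triangle inequality, a detour through $\mathcal M_+$ via pure-source segments for finiteness, and a weak-formulation testing argument for separation. This is almost certainly the same skeleton as in the cited work.

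Two minor points. First, the sign in your testing identity is off: integrating $\langle\varphi,\dot\mu_t\rangle_\varpi=\langle\nabla\varphi,\nabla\psi_t\rangle_{\mu_t}+h_t\langle\varphi,p\rangle_\varpi$ gives a $+$ in front of the transport integral, not a $-$; this is harmless since you bound in absolute value. Second, in the separation step you should be explicit that the uniform bound on $\|\nabla\varphi\|_{\mu_t}$ comes from a uniform bound on each $\mu_t(x)$, not just on the total mass; this follows because $\mu_t\ge 0$ componentwise and $\|\mu_t\|_{\varpi,1}\le \|\mu_0\|_{\varpi,1}+\bigl(\int_0^1 h_s^2\,\di s\bigr)^{1/2}$, so along a minimizing sequence with action tending to $0$ the curves stay in a fixed compact set and $\hat\mu_t$ is uniformly bounded above. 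With these clarifications your outline is correct.
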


We also had the following result for the instantaneous solvability of the continuity equation \eqref{eq_ce}-\eqref{eq_ce_discrete_calculus} as 
\cite[Lemma 3.4]{mao2026wasserstein} and \cite[Lemma 3.5]{mao2026wasserstein}.
\begin{lemma}
\label{lem_ce_solver_combined}
Fix $\mu \in \al M_+$.
The linear equation in the unknown $\psi$
\begin{equation*}
\label{eq_cenos_proto}
\nu + \nabla_\mu \cdot \nabla \psi=0
\end{equation*}
is solvable if and only if $\nu \in \{
    \psi \in \bb R ^{\al X} \mid  \sum_{x\in \al X} \psi(x) \varpi (x)=0
    \}$.
Moreover, for any arbitrary $\rho\in \mathbb{R}^{\al X}$, 
    the linear equation in the unknowns $(\nabla\psi_\rho,h_\rho)$
    \begin{equation*}
    \label{eq_tangent_identification_0}
    \rho+\nabla_{\!\mu}\!\cdot\nabla\psi_\rho=h_\rho\,p
    \end{equation*}
    has a unique solution.
We write the solution mapping as
    \begin{equation*}
    D_\mu(\rho)
    :=(\nabla\psi_\rho,h_\rho).
    \end{equation*}
\end{lemma}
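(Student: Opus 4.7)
The plan is to treat $L_\mu := -\nabla_\mu\cdot\nabla$ as a self-adjoint nonnegative operator on the finite-dimensional Hilbert space $\mathrm L^2(\al X,\varpi)$, identify its kernel and range by a Fredholm-type argument, and then reduce the second assertion to the first by extracting $h_\rho$ from a scalar solvability condition coming from $p$.

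As a first step, I would record the adjoint identity: substituting $\Psi=\hat\mu*\nabla\phi$ into \eqref{eq_int_by_parts} and using the symmetry of $\hat\mu$ together with reversibility \eqref{eq_markov_reversible}, one obtains
\begin{equation*}
\langle L_\mu \psi,\phi\rangle_{\varpi} \;=\; \langle \nabla\psi,\nabla\phi\rangle_{\mu},
\qquad \forall~\psi,\phi\in \bb R^{\al X}.
\end{equation*}
Hence $L_\mu$ is symmetric and nonnegative on $\mathrm L^2(\al X,\varpi)$, and in finite dimensions this yields the orthogonal decomposition $\bb R^{\al X}=\ker(L_\mu)\oplus \mathrm{Range}(L_\mu)$ with respect to $\langle\cdot,\cdot\rangle_{\varpi}$.

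Next, I would pin down the kernel. Since $\mu\in\al M_+$, the logarithmic mean satisfies $\hat\mu(x,y)=\theta(\mu(x),\mu(y))>0$ on every pair, while irreducibility of $K$ makes the underlying graph $\{(x,y):K(x,y)>0\}$ connected. Therefore $L_\mu\psi=0$ forces $\|\nabla\psi\|_\mu^{2}=0$, which propagates across $K$-edges to give $\psi\equiv\mathrm{const}$. Thus $\ker(L_\mu)=\mathrm{span}\{\mathbf 1\}$, and its $\varpi$-orthogonal complement is precisely $\{\nu\in\bb R^{\al X}:\sum_{x\in\al X}\nu(x)\varpi(x)=0\}$. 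Since the equation in the first part rewrites as $L_\mu\psi=\nu$, this settles the first assertion.

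For the second assertion, I would rewrite $\rho+\nabla_{\!\mu}\!\cdot\nabla\psi_\rho=h_\rho p$ as $L_\mu\psi_\rho=\rho-h_\rho p$ and impose the solvability condition from the first part, namely $\langle \rho-h_\rho p,\mathbf 1\rangle_\varpi=0$. Since $p\in\al M_+$ with $\|p\|_{\varpi,1}=1$ gives $\langle p,\mathbf 1\rangle_\varpi=1$, the scalar $h_\rho$ is uniquely forced to equal $\sum_{x\in\al X}\rho(x)\varpi(x)$. With this $h_\rho$, the first part supplies a potential $\psi_\rho$ unique up to an additive constant, and since $\nabla$ annihilates constants, the pair $(\nabla\psi_\rho,h_\rho)$ is uniquely determined. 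The only technical subtlety is the reduction of \eqref{eq_int_by_parts} to the adjoint identity for $L_\mu$; beyond that the proof is elementary finite-dimensional linear algebra, with the strict positivity of $\mu$ and of $p$ entering solely to ensure connectivity of the weighted graph and nonvanishing of $\langle p,\mathbf 1\rangle_\varpi$, respectively.
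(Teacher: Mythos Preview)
Your proof is correct and follows the standard route: identify $L_\mu=-\nabla_\mu\cdot\nabla$ as a symmetric nonnegative operator on the finite-dimensional space $\mathrm L^2(\al X,\varpi)$, use irreducibility of $K$ together with $\mu\in\al M_+$ to pin down $\ker L_\mu=\mathrm{span}\{\mathbf 1\}$, and then read off the range as the $\varpi$-mean-zero hyperplane; the second assertion then reduces to a one-dimensional solvability condition fixing $h_\rho$. The paper itself does not give a proof of this lemma in the present text---it is quoted as a preliminary from \cite[Lemmas~3.4 and~3.5]{mao2026wasserstein}---so there is no in-paper argument to compare against, but your approach is exactly the natural one and would be what the cited reference contains.
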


\section{Geodesic geometry}
\label{sec_geodesic}
In this section, we investigate the geodesic geometry induced by our Benamou--Brenier-type metric \cite{mao2026wasserstein}.
Section \ref{sec_geoexists} develops a flux formulation of the continuity equation and of the action,
which is the main tool for proving the existence of minimizers,
and in particular yields constant-speed geodesics.
In Section \ref{sec_nonlocality}, we show that the prescribed source direction $p$ together with the monotonicity of the logarithmic mean $\theta$
forces optimal trajectories to attain positive levels across all components for most times,
leading to a pronounced non-local behavior of geodesics.
Section \ref{sec_compare} applies this non-locality to compare $\al W^{a,b}_p$ with the shift--transport metric of \cite{piccoli2014generalized},
and derives an explicit bound together with a strictness criterion.
Assuming in addition the existence of geodesics that remain fully supported on the chain for all times in $[0,1]$,
Section \ref{sec_uniqueness} proves a conditioned uniqueness statement.
Finally, under the same all-time full-support assumption, Section \ref{sec_geoeq} derives a weak form of the geodesic equation.

\subsection{$(\al M,\al W_p^{a,b})$ is a geodesic space}
\label{sec_geoexists}
In this subsection, we follow the approach of \cite{erbar2012ricci} for the existence of an optimal trajectory whose action realizes $\al W_p^{a,b}$, which turns out to be a constant-speed geodesic.
It all relies on what we call the \emph{flux-reformulation of $\al W_p^{a,b}$}, 
    which turns the admissible constraints linear and the action convex.

We shall reformulate ${\al W}_p^{a,b}$ 
    by replacing $\hat\mu\ast\nabla\psi_t$ with $V_t:[0,1]\rightarrow\mathbb R^{\al X \times \al X}$. 
Consider the modified admissible trajectory set $\mathcal{CE}_p(\mu_0,\mu_1;[0,1])$ with the new continuity equation:
    \begin{equation}
    \label{new_ce}
    \left\{
    \begin{aligned}
    & \mu_t:[0,1] \rightarrow \al M ~\text{is absolutely continuous},
    \\
    & V_t:[0,1]\rightarrow\mathbb R^{\al X \times \al X} ~\text{is measurable},
    \\
    & h_t:[0,1]\rightarrow \bb R ~\text{is measurable},
    \\
    & \left\{
    \begin{aligned}
    &\dot\mu_t+\nabla \cdot  V_t=h_tp, \qquad \text{for a.e. }t\in[0,1],\\
    &\mu_t |_{t=0} = \mu_0,\quad \mu_t |_{t=1}=\mu_1.
    \end{aligned}
    \right.
    \end{aligned}
    \right.
    \end{equation}
We also define
    \begin{equation}
    \label{def_A'}
        \begin{gathered}
            A'(\mu,V):= \frac 1 2 \sum_{x,y\in \al X}\alpha(V(x,y),\mu(x),\mu(y)) K(x,y)\varpi(x),
        \end{gathered}
    \end{equation}
    where $\alpha:\bb R \times \bb R_+^2\rightarrow \bb R \cup\{+\infty\}$ is defined by
    \begin{equation}
    \label{def_alpha}
    \alpha(v,s,t):=
    \left\{
    \begin{aligned}
    & 0, & & \text{if }\theta(s,t)=0 \text{ and }v=0, \\
    &\frac{v^2}{\theta(s,t)}, & &\text{if }\theta(s,t)\ne 0, \\
    &\infty, && \text{if } \theta(s,t)=0 \text{ and }v\ne0. 
    \end{aligned}
    \right.
    \end{equation}
With the usual convention that $v^2/0 = 0$ for $v = 0$ and $v^2/0 = \infty$ for $v \neq 0$,
it is convenient to directly write
    \begin{equation*}
    \alpha(v,s,t) = \frac{v^2}{\theta(s,t)}.
    \end{equation*}
We have the flux-actions:
    \begin{equation*}
    \begin{gathered}
    \mathfrak E^{a,b}_{\rm Quad}((\mu_t,V_t,h_t)_{t\in[0,1]}):=
    a^2
    \int_0^1 | h_t |^2 \text d t
    +
    b^2 \int_0^1 A'(\mu_t,V_t )\text d t,
    \\
    \mathfrak E^{a,b}_{\rm LinSq}((\mu_t,V_t,h_t)_{t\in[0,1]}):=
    \left(\int_0^1 \bigg[ a^2 | h_t |^2 
    +
    b^2 A'(\mu_t,V_t )\bigg]^{\frac{1}{2}}\text d t
    \right)^2.
    \end{gathered}
    \end{equation*}
As ``Quad'' refers to the quadratic time integrand,
    the subscript ``LinSq'' indicates integrating the linear amplitude first and then squaring.

The two flux action functionals yield the same infimum value.
This is a standard fact for action minimization of this type; see, for instance, \cite[Theorem 5.4]{dolbeault2009new}.
A similar proof for $E^{a,b}_{\rm Quad}$ was also given in our previous work \cite[Lemma 3.6]{mao2026wasserstein}.
We therefore omit the proof here.
\begin{lemma}
\label{lem_frakELinSq_potential_rewrite}
For any $\mu_0,\mu_1\in \al M$,
    \begin{equation*}
    \begin{aligned}
    &\inf \left\{ \mathfrak E^{a,b}_{\rm Quad}((\mu_t,V_t,h_t)_{t\in[0,1]} )\mid (\mu_t,V_t,h_t)_{t\in[0,1]}\in\mathcal{CE}_p (\mu_0,\mu_1;[0,1])\right\}
    \\
    &\qquad \qquad \qquad \qquad =
    \inf \left\{ \mathfrak E^{a,b}_{\rm LinSq}((\mu_t,V_t,h_t)_{t\in[0,1]}) \mid (\mu_t,V_t,h_t)_{t\in[0,1]}\in\mathcal{CE}_p (\mu_0,\mu_1;[0,1])\right\}.
    \end{aligned}
    \end{equation*}
\end{lemma}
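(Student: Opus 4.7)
The plan is to establish the two inequalities $\inf \mathfrak{E}^{a,b}_{\rm LinSq} \leq \inf \mathfrak{E}^{a,b}_{\rm Quad}$ and $\inf \mathfrak{E}^{a,b}_{\rm Quad} \leq \inf \mathfrak{E}^{a,b}_{\rm LinSq}$ on the common admissible set $\mathcal{CE}_p(\mu_0,\mu_1;[0,1])$. For any admissible $(\mu_t, V_t, h_t)$, writing $f_t := a^2 h_t^2 + b^2 A'(\mu_t, V_t) \geq 0$, the first direction is immediate from Jensen's inequality on $([0,1],dt)$ applied to the concave function $\sqrt{\cdot}$:
\begin{equation*}
\mathfrak{E}^{a,b}_{\rm LinSq}(\mu,V,h) = \Big(\int_0^1 f_t^{1/2}\,dt\Big)^{\!2} \leq \int_0^1 f_t\,dt = \mathfrak{E}^{a,b}_{\rm Quad}(\mu,V,h),
\end{equation*}
and the inequality transfers to the infima.

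For the converse I would employ a time reparametrization that renders the integrand constant. Given any admissible triple with $L := \int_0^1 f_t^{1/2}\,dt \in (0,\infty)$, I set $\sigma(t) := L^{-1}\int_0^t f_\tau^{1/2}\,d\tau$ (regularizing $f^{1/2}$ by $\varepsilon > 0$ if necessary to enforce strict monotonicity) and let $\tau(s)$ denote its inverse. Defining
\begin{equation*}
\tilde \mu_s := \mu_{\tau(s)}, \qquad \tilde V_s := \tau'(s)\, V_{\tau(s)}, \qquad \tilde h_s := \tau'(s)\, h_{\tau(s)},
\end{equation*}
the continuity equation in \eqref{new_ce} is preserved because every term scales by the common factor $\tau'(s)$, and the endpoints are unchanged. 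Since $v \mapsto \alpha(v,s,t)$ is quadratic in its first argument, $A'$ rescales by $(\tau'(s))^2$, and the choice of $\sigma$ forces $\tau'(s) = L / f_{\tau(s)}^{1/2}$, so that
\begin{equation*}
a^2 \tilde h_s^2 + b^2 A'(\tilde \mu_s, \tilde V_s) = (\tau'(s))^2 f_{\tau(s)} = L^2
\end{equation*}
is constant in $s$. Thus $\mathfrak{E}^{a,b}_{\rm Quad}(\tilde \mu, \tilde V, \tilde h) = L^2 = \mathfrak{E}^{a,b}_{\rm LinSq}(\mu, V, h)$, and passing to the infimum on both sides gives the desired inequality.

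The main technical hurdle is the degenerate case in which $f_t$ vanishes on a subset of $(0,1)$ with positive measure, where $\sigma$ is only weakly monotone and the change of variables is ill-defined. I would note that $A'(\mu_t, V_t) = 0$ together with the convention on $\alpha$ forces $V_t \equiv 0$ on every edge with $K(x,y) > 0$, so the continuity equation reduces there to $\dot\mu_t = 0$ and $\mu_t$ is constant on such a subinterval; both actions already vanish on it. One may therefore excise these flat subintervals and reglue the remaining pieces without altering the endpoints or either action, or equivalently regularize $f_t^{1/2}$ by adding $\varepsilon>0$, carry out the construction in that regime, and pass to $\varepsilon\to 0^+$ invoking lower semicontinuity of the action. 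The standard verifications of the absolute continuity of $\tilde\mu$ and the measurability of $\tilde V,\tilde h$ then follow from composition with a monotone Lipschitz change of time.
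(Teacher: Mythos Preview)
Your proof is correct and follows exactly the standard reparametrization argument that the paper invokes by reference (to \cite[Theorem 5.4]{dolbeault2009new} and \cite[Lemma 3.6]{mao2026wasserstein}) without writing out; in particular, the $\varepsilon$-regularization of $f^{1/2}$ cleanly handles both the strict monotonicity of $\sigma$ and the absolute continuity of $\tilde\mu$, yielding $\inf \mathfrak E^{a,b}_{\rm Quad}\le (L+\varepsilon)^2\to L^2$.
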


The next result provides a characterization of the convexity and affine cases of $\alpha$. Proof deferred to Appendix \ref{sec_app_e}.
\begin{lemma} 
\label{lem_alpha_convex} On $\mathbb{R}\times \mathbb{R}_+^2$, the extended-valued map $\alpha$ is convex and lower semi-continuous. 
Moreover, on $\mathbb{R}\times(0,\infty)^2$, for two distinct points 
    \begin{equation*} Z_1=(v_1,s_1,t_1),\qquad Z_2=(v_2,s_2,t_2),\end{equation*} 
    the statements (\RomanNumeralCaps{1}) and (\RomanNumeralCaps{2}) are equivalent:
    \begin{enumerate}
    \item[(\RomanNumeralCaps{1})] $\alpha$ is affine on the segment $[Z_1,Z_2]$;
    \item[(\RomanNumeralCaps{2})] $v_1=v_2=0$ or $\exists\kappa>0, \kappa\neq 1$ with $Z_2=\kappa Z_1$.
    \end{enumerate}
\end{lemma}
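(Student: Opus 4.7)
The starting point is the reciprocal formula \eqref{eq_logarithmic_mean_reciprocal} for $\theta$, which on $\mathbb{R}\times(0,\infty)^2$ yields the integral representation
\begin{equation*}
\alpha(v,s,t)
=
\int_0^1 \frac{v^2}{\xi s+(1-\xi)t}\,\di\xi.
\end{equation*}
Each integrand $(v,s,t)\mapsto v^2/(\xi s+(1-\xi)t)$ is the perspective of $v\mapsto v^2$ composed with an affine map, hence extends to a convex and lower semicontinuous function on $\mathbb{R}\times\mathbb{R}_+^2$ under the conventions $v^2/0=+\infty$ for $v\neq 0$ and $0/0=0$. Convexity of $\alpha$ then follows by integration, and lower semicontinuity from Fatou's lemma, which disposes of the first assertion.

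For the equivalence on $\mathbb{R}\times(0,\infty)^2$, the direction (II) $\Rightarrow$ (I) is immediate: a direct inspection shows that $\alpha$ is $1$-homogeneous, so it is linear on any ray through the origin, while in the case $v_1=v_2=0$ one has $\alpha\equiv 0$ on the segment $[Z_1,Z_2]$ trivially.

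The substantive content is the converse (I) $\Rightarrow$ (II). Set $\phi(\lambda):=\alpha(Z_1+\lambda(Z_2-Z_1))$ for $\lambda\in[0,1]$. I would first rule out degenerate sign configurations for $v_1,v_2$. If $v_1$ and $v_2$ have opposite signs, then $v_\lambda=0$ at some interior $\lambda^\ast\in(0,1)$, so $\phi(\lambda^\ast)=0$; affinity of $\phi$ together with nonnegativity at the endpoints then forces $\phi\equiv 0$, hence $v_1=v_2=0$, a contradiction. If exactly one of $v_1,v_2$ vanishes, say $v_1=0$, then $\phi(0)=0$, and affinity demands $\phi(\lambda)=\lambda\phi(1)$; substituting the integral representation and comparing powers of $\lambda$ yields $\theta(s_\lambda,t_\lambda)=\lambda\,\theta(s_2,t_2)$ on $(0,1]$, which is incompatible with $\theta(s_1,t_1)>0$. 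Thus either $v_1=v_2=0$ (already covered by (II)) or both $v_1,v_2$ are nonzero and share the same sign.

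In this remaining case I would rewrite $\phi(\lambda)=\int_0^1 g(v_\lambda,u_\lambda^\xi)\,\di\xi$, where $g(v,u):=v^2/u$ is convex on $\mathbb{R}\times(0,\infty)$ and $u_\lambda^\xi:=\xi s_\lambda+(1-\xi)t_\lambda>0$. Since each $\lambda\mapsto g(v_\lambda,u_\lambda^\xi)$ is convex and their integral is affine, each integrand must itself be affine on $[0,1]$ for a.e.\ $\xi$, and then for every $\xi\in[0,1]$ by continuity in $\xi$. A short Hessian computation reveals that $\nabla^2 g(v,u)$ has a one-dimensional null space spanned by $(v,u)$, so $g$ is affine along a non-trivial segment only when that segment lies on a ray through the origin. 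This forces $(v_2,u_2^\xi)=\kappa(v_1,u_1^\xi)$ with $\kappa=v_2/v_1>0$ independent of $\xi$; matching $u_2^\xi=\kappa u_1^\xi$ for every $\xi\in[0,1]$ forces $s_2=\kappa s_1$ and $t_2=\kappa t_1$, hence $Z_2=\kappa Z_1$ with $\kappa\neq 1$ because $Z_1\neq Z_2$. The main obstacle is precisely the degenerate case analysis near $v=0$, where the two-dimensional Hessian argument fails and one has to argue directly with $\phi$ and the integral representation; once those cases are cleanly dispatched, the rigidity of $(v,u)\mapsto v^2/u$ along line segments closes the argument.
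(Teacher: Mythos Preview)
Your proof is correct and follows essentially the same route as the paper: both use the integral representation $\alpha(v,s,t)=\int_0^1 v^2/(\xi s+(1-\xi)t)\,\di\xi$ and reduce the affine-segment characterization to that of the perspective function $(v,u)\mapsto v^2/u$. The only stylistic difference is that the paper computes the second derivative of each integrand explicitly (obtaining $2(R_\xi c-Q_\xi d)^2/(Q_\xi\tau+R_\xi)^3$), which handles all sign configurations of $v_1,v_2$ uniformly, whereas your Hessian null-space argument requires first dispatching the degenerate cases $v_1v_2\le 0$ by hand; both arguments arrive at the same proportionality conditions.
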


The following result provides a reformulation of $\al W^{a,b}_p$ in terms of $\mathcal{CE}_p$ and $A'$.
A related statement was obtained by Erbar and Maas in \cite[Lemma 2.9]{erbar2012ricci}, and our argument is a slightly adapted version of theirs.
The proof is deferred to Appendix \ref{sec_app_W_rewrite}.
\begin{lemma}
\label{lem_W_rewrite}
For $\mu_0,\mu_1\in \al M$, we have 
    \begin{equation}
    \label{eq_Wpab_with_V}
    ({\al W}_p^{a,b}(\mu_0,\mu_1))^2=
    \inf_{}
    \left\{
    \mathfrak E^{a,b}_{\rm Quad}((\mu_t,V_t,h_t)_{t\in[0,1]})
    \mid
    (\mu_t,V_t,h_t)_{t\in[0,1]}\in\mathcal{CE}_p(\mu_0,\mu_1;[0,1])
    \right\}.
    \end{equation}
\end{lemma}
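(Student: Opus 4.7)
The plan is to prove \eqref{eq_Wpab_with_V} as two matching inequalities. For the easy direction $\al W^{a,b}_p(\mu_0,\mu_1)^{2}\ge \inf \mathfrak E^{a,b}_{\rm Quad}$, I would argue by direct substitution: given any $(\mu_t,\psi_t,h_t)\in \mathrm{CE}_p(\mu_0,\mu_1;[0,1])$, set $V_t := \hat\mu_t\ast\nabla\psi_t$. The second form of the continuity equation in \eqref{eq_ce_discrete_calculus} immediately yields $(\mu_t,V_t,h_t)\in \mathcal{CE}_p(\mu_0,\mu_1;[0,1])$, and on each edge the identity $\alpha(V_t(x,y),\mu_t(x),\mu_t(y))=\hat\mu_t(x,y)(\nabla\psi_t(x,y))^{2}$, with both sides vanishing when $\hat\mu_t=0$, shows that the two actions agree term by term.

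For the reverse inequality, starting from $(\mu_t,V_t,h_t)\in \mathcal{CE}_p(\mu_0,\mu_1;[0,1])$ with finite flux action, I plan to construct a compatible potential $\psi_t$. Finiteness of $A'(\mu_t,V_t)$ for a.e.\ $t$, together with the definition of $\alpha$, forces $V_t(x,y)=0$ on every edge with $\hat\mu_t(x,y)K(x,y)=0$, so $V_t$ descends to an element of $\mathscr G_{\mu_t}$. At a.e.\ time $t$ I would then solve the discrete Poisson equation
\begin{equation*}
\nabla_{\!\mu_t}\!\cdot\nabla\psi_t = \nabla\cdot V_t,
\end{equation*}
which is precisely the requirement that replacing $V_t$ by $\hat\mu_t\ast\nabla\psi_t$ preserves the continuity equation. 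Solvability holds because the right-hand side has zero $\varpi$-mean by antisymmetrization of $V_t$: when $\mu_t\in \al M_+$ this is Lemma~\ref{lem_ce_solver_combined}, while in general $\nabla\psi_t$ arises as the orthogonal projection of $V_t$ onto the range of $\nabla$ inside the finite-dimensional Hilbert space $\mathscr G_{\mu_t}$.

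The comparison of actions then reduces to the pointwise-in-$t$ estimate $\|\nabla\psi_t\|_{\mu_t}^{2}\le A'(\mu_t,V_t)$, obtained from
\begin{equation*}
\|\nabla\psi_t\|_{\mu_t}^{2}
= -\langle \psi_t,\nabla_{\!\mu_t}\!\cdot\nabla\psi_t\rangle_{\varpi}
= -\langle \psi_t,\nabla\cdot V_t\rangle_{\varpi}
= \langle \nabla\psi_t,V_t\rangle_{\varpi}
\end{equation*}
(two applications of \eqref{eq_int_by_parts}) followed by a Cauchy--Schwarz step that pairs $\sqrt{\hat\mu_t}\,\nabla\psi_t$ against $V_t/\sqrt{\hat\mu_t}$ on the support of $\hat\mu_t K$. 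Integrating in $t$ produces $(\mu_t,\psi_t,h_t)\in \mathrm{CE}_p$ with $E^{a,b}_{\rm Quad}\le \mathfrak E^{a,b}_{\rm Quad}$, and the desired infimum inequality follows. The main obstacle will be the measurability of the selection $t\mapsto\psi_t$ when $\mu_t$ fails to be strictly positive on a set of positive measure, since the kernel of $\nabla$ inside $\mathscr G_{\mu_t}$ may have varying dimension in $t$; I would resolve this by taking $\psi_t$ to be the minimal $\mathrm L^{2}(\al X,\varpi)$-norm solution of the Poisson equation, a Moore--Penrose-type choice that depends measurably on $(\mu_t,V_t)$, following the same adaptation as in \cite[Lemma~2.9]{erbar2012ricci}. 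The source term $h_t\,p$ enters only through the right-hand side of the continuity equation and plays no role in the selection step.
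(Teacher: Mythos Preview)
Your overall strategy is correct and in fact more direct than the paper's. The paper proves the ``$\le$'' direction via a time mollification detour: it first squeezes $(\mu_t,V_t,h_t)$ into $[\ep,1-\ep]$, pads with constant segments, convolves in time, and only then passes to $\Psi_t^\ep:=V_t^\ep/\hat\mu_t^\ep$ and projects onto $\mathrm{Ran}(\nabla)$ inside $\mathscr G_{\mu_t^\ep}$. This buys that the set $\{t:\hat\mu_t^\ep(x,y)=0,\ V_t^\ep(x,y)\ne 0\}$ is genuinely empty rather than merely null, at the price of a $1/(1-2\ep)$ factor that is then sent to $1$. You work directly at a.e.\ $t$, which is enough and shorter; the Cauchy--Schwarz bound $\|\nabla\psi_t\|_{\mu_t}^2\le A'(\mu_t,V_t)$ and the Moore--Penrose measurable selection are both fine.

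Two corrections are needed. First, what should be projected in $(\mathscr G_{\mu_t},\langle\cdot,\cdot\rangle_{\mu_t})$ is not $V_t$ but $\Psi_t:=V_t/\hat\mu_t$ (set to zero on edges with $\hat\mu_t K=0$, which is licit since finiteness of $A'$ forces $V_t=0$ there). Projecting $V_t$ itself characterizes $\nabla_{\mu_t}\!\cdot\nabla\psi_t=\nabla_{\mu_t}\!\cdot V_t=\nabla\!\cdot(\hat\mu_t\ast V_t)$, which is the wrong right-hand side; projecting $\Psi_t$ gives $\nabla_{\mu_t}\!\cdot\nabla\psi_t=\nabla_{\mu_t}\!\cdot\Psi_t=\nabla\!\cdot V_t$ as desired, and solvability is then automatic from the orthogonal decomposition $\mathscr G_{\mu_t}=\mathrm{Ran}(\nabla)\oplus^\perp\mathrm{Ker}(\nabla_{\mu_t}\cdot)$. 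Second, when $\mu_t\notin\al M_+$ the zero $\varpi$-mean of $\nabla\!\cdot V_t$ is \emph{not} by itself sufficient for the Poisson equation to be solvable: $\mathrm{Ran}(\nabla_{\mu_t}\!\cdot\nabla)$ consists of functions whose $\varpi$-sum vanishes on each connected component of the subgraph $\{\hat\mu_t K>0\}$. This does hold here, because $V_t$ vanishes on all boundary edges of such components, but you should either verify it or simply bypass it via the projection of $\Psi_t$ as above (which is what the paper does in its Step~2.3).
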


We now give the existence of a minimizer for the reformulated optimization problem \eqref{eq_Wpab_with_V}.
The argument follows the strategy of \cite[Theorem 3.2]{erbar2012ricci} and is presented in Appendix \ref{sec_app_minimizer}.
\begin{theorem}
\label{thm_minimizer}
For $\mu_0,\mu_1\in \al M$,
    there exists a triplet $(\mu_t,V_t,h_t)_{t\in[0,1]}\in\mathcal{CE}_p(\mu_0,\mu_1;[0,1])$  
    such that ${\al W}_p^{a,b}(\mu_0,\mu_1)^2=\mathfrak E^{a,b}_{\rm Quad}((\mu_t,V_t,h_t)_{t\in[0,1]})$.
\end{theorem}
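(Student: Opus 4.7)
The plan is to apply the direct method of the calculus of variations to the flux-formulated problem provided by Lemma \ref{lem_W_rewrite}. I would take a minimizing sequence $(\mu^n_t, V^n_t, h^n_t)_n \subset \mathcal{CE}_p(\mu_0,\mu_1;[0,1])$ with $\mathfrak E^{a,b}_{\rm Quad}(\mu^n,V^n,h^n) \le C < \infty$, extract subsequences converging in appropriate topologies, pass to the limit in the linear continuity equation \eqref{new_ce}, and close the argument by a lower semicontinuity statement built on the joint convexity of $\alpha$ from Lemma \ref{lem_alpha_convex}.

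For compactness, the source bound $a^2\int_0^1 |h^n_t|^2\,\di t \le C$ yields $h^n \rightharpoonup h$ in $\mathrm L^2(0,1)$. Testing the continuity equation against the constant function $1$ and using \eqref{eq_int_by_parts} together with $\|p\|_{\varpi,1}=1$ gives $\tfrac{\di}{\di t}\|\mu^n_t\|_{\varpi,1} = h^n_t$, so each $\mu^n_t(x)$ is bounded uniformly in $(n,t)$; consequently $\theta(\mu^n_t(x),\mu^n_t(y))$ is uniformly bounded. Combining this with the pointwise identity $V^2 = \alpha(V,s,t)\cdot\theta(s,t)$ (and $V=0$ wherever $\theta=0$, forced by finiteness of the action) yields a uniform bound on $V^n$ in the weighted $\mathrm L^2$-space on $[0,1]\times\{(x,y):K(x,y)>0\}$, from which, modulo the equivalence relation of Section \ref{sec:2.1}, I extract $V^n \rightharpoonup V$. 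The continuity equation then bounds $\dot\mu^n$ uniformly in $\mathrm L^2([0,1];\bb R^{\al X})$, and Arzelà--Ascoli delivers a uniformly convergent subsequence $\mu^n \to \mu$ on $[0,1]$; nonnegativity, absolute continuity of $\mu$, and the endpoint constraints all pass through.

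Since the continuity equation is linear in $(\mu,V,h)$ and $\al X$ is finite, testing it against each $\mathbf 1_{\{x\}}$ and letting $n\to\infty$ shows $(\mu,V,h) \in \mathcal{CE}_p(\mu_0,\mu_1;[0,1])$. It remains to prove lower semicontinuity of the action. The source term $\int_0^1 h_t^2\,\di t$ is weakly LSC on $\mathrm L^2(0,1)$. For the transport term, Lemma \ref{lem_alpha_convex} guarantees that $\alpha$ is jointly convex and lower semicontinuous in all three arguments; combined with the strong (uniform) convergence $\mu^n\to\mu$ and the weak $\mathrm L^2$-convergence $V^n\rightharpoonup V$, an Ioffe-type lower semicontinuity theorem for convex integral functionals yields $\int_0^1 A'(\mu_t,V_t)\,\di t \le \liminf_n \int_0^1 A'(\mu^n_t,V^n_t)\,\di t$. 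Summing gives $\mathfrak E^{a,b}_{\rm Quad}(\mu,V,h) \le \al W^{a,b}_p(\mu_0,\mu_1)^2$, and admissibility forces equality. The principal obstacle I anticipate is precisely this last lower semicontinuity step on the ``vacuum'' set where some $\mu^n(x)$ approach zero: the nonlinearity $V^2/\theta$ degenerates there, and only the extended-valued joint convexity of $\alpha$ supplied by Lemma \ref{lem_alpha_convex} prevents weak limits of the flux from escaping into configurations of infinite cost, which would otherwise break the semicontinuity argument.
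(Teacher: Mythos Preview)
Your approach is correct and follows the same direct-method skeleton as the paper's proof in Appendix~\ref{sec_app_minimizer}: minimizing sequence, compactness, passage to the limit in the linear constraint, and lower semicontinuity via the joint convexity of $\alpha$ from Lemma~\ref{lem_alpha_convex}. The functional-analytic packaging differs. You exploit the pointwise identity $|V^n|^2=\alpha(V^n,\mu^n,\mu^n)\,\theta(\mu^n,\mu^n)\le C'\alpha$ to obtain an $\mathrm L^2$ bound on $V^n$, hence weak $\mathrm L^2$ compactness by reflexivity, and then upgrade to uniform convergence of $\mu^n$ via Arzel\`a--Ascoli using the $\mathrm L^2$ bound on $\dot\mu^n$; the paper instead bounds $V^n$ only in $\mathrm L^1$ (via Cauchy--Schwarz in time), passes to weak$^*$ limits of the associated signed time-measures $\di\rho^n_{x,y}=V^n_t(x,y)\,\di t$ and $\di\eta^n=h^n_t\,\di t$, recovers densities for the limits by a uniform-absolute-continuity argument, and defines $\mu_t$ pointwise from the integrated continuity equation. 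Your route is somewhat more elementary and self-contained in this finite-state setting, since reflexivity plus Ioffe-type strong--weak lower semicontinuity suffice; the paper's measure-theoretic version, inherited from \cite{erbar2012ricci}, has the virtue of invoking a single off-the-shelf lower-semicontinuity theorem for convex integral functionals on measures \cite{buttazzo1989semicontinuity}, which absorbs both the source and the transport term at once and handles the degeneration of $V^2/\theta$ at the vacuum without distinguishing modes of convergence.
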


A direct consequence of Theorem \ref{thm_minimizer} by Jensen inequality is that:
\begin{corollary}
\label{cor_geodesic}
$(\al M,\al W_p^{a,b})$ is a geodesic space. 
In particular, for any $\mu_0$ and $\mu_1$,
    any minimizer in $\mathcal{CE}_p (\mu_0,\mu_1;[0,1]) $ realizing $\al W_p^{a,b}(\mu_0,\mu_1)^2$ is a constant-speed geodesic.
\end{corollary}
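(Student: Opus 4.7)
The plan is to take a minimizer produced by Theorem \ref{thm_minimizer}, use a Cauchy--Schwarz/Jensen inequality to show its instantaneous speed is constant in time, and then invoke the triangle inequality together with a standard affine time-rescaling to conclude the constant-speed geodesic property. In particular, once any pair of endpoints is joined by such a curve, $(\al M,\al W^{a,b}_p)$ automatically qualifies as a geodesic space.

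More concretely, let $(\mu_t,V_t,h_t)_{t\in[0,1]} \in \mathcal{CE}_p(\mu_0,\mu_1;[0,1])$ be a minimizer provided by Theorem \ref{thm_minimizer}, and set the instantaneous ``speed''
\begin{equation*}
\ell(t):=\sqrt{a^2 h_t^2 + b^2 A'(\mu_t,V_t)},\qquad t\in[0,1].
\end{equation*}
Then $\mathfrak E^{a,b}_{\rm Quad} = \int_0^1 \ell(t)^2\,\di t$ and $\mathfrak E^{a,b}_{\rm LinSq} = \bigl(\int_0^1 \ell(t)\,\di t\bigr)^2$, so Cauchy--Schwarz on $\mathrm L^2(0,1)$ yields $\mathfrak E^{a,b}_{\rm LinSq} \le \mathfrak E^{a,b}_{\rm Quad}$ with equality if and only if $\ell$ is constant a.e. Minimality of our triple combined with Lemmas \ref{lem_frakELinSq_potential_rewrite}--\ref{lem_W_rewrite} produces the chain
\begin{equation*}
\mathfrak E^{a,b}_{\rm Quad}(\mu_t,V_t,h_t)=\al W^{a,b}_p(\mu_0,\mu_1)^2=\inf \mathfrak E^{a,b}_{\rm LinSq} \le \mathfrak E^{a,b}_{\rm LinSq}(\mu_t,V_t,h_t) \le \mathfrak E^{a,b}_{\rm Quad}(\mu_t,V_t,h_t),
\end{equation*}
which forces equality throughout Jensen and hence $\ell(t) \equiv c := \al W^{a,b}_p(\mu_0,\mu_1)$ for a.e.\ $t$.

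To upgrade constant speed to a constant-speed geodesic, for any $0\le s<t\le 1$ I would restrict the minimizer to $[s,t]$ and rescale it to $[0,1]$ via $\tilde\mu_\tau = \mu_{s+(t-s)\tau}$, $\tilde V_\tau = (t-s)\, V_{s+(t-s)\tau}$, $\tilde h_\tau = (t-s)\, h_{s+(t-s)\tau}$. The prefactor $(t-s)$ is precisely what is needed to preserve the continuity equation \eqref{new_ce} under the affine time change, so the rescaled triple lies in $\mathcal{CE}_p(\mu_s,\mu_t;[0,1])$; using the quadratic homogeneity $\alpha(\lambda v,\cdot,\cdot)=\lambda^2\alpha(v,\cdot,\cdot)$ from \eqref{def_alpha} and changing variables $r=s+(t-s)\tau$, its action evaluates to $\mathfrak E^{a,b}_{\rm Quad}(\tilde\mu,\tilde V,\tilde h) = (t-s)\int_s^t \ell(r)^2\,\di r = c^2(t-s)^2$. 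Hence $\al W^{a,b}_p(\mu_s,\mu_t)\le c(t-s)$, and chaining this bound with the triangle inequality
\begin{equation*}
c=\al W^{a,b}_p(\mu_0,\mu_1)\le \al W^{a,b}_p(\mu_0,\mu_s)+\al W^{a,b}_p(\mu_s,\mu_t)+\al W^{a,b}_p(\mu_t,\mu_1)\le cs+c(t-s)+c(1-t)=c
\end{equation*}
forces equality throughout, yielding the constant-speed geodesic identity $\al W^{a,b}_p(\mu_s,\mu_t)=(t-s)\,\al W^{a,b}_p(\mu_0,\mu_1)$.

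The heavy lifting is already contained in Theorem \ref{thm_minimizer}; once a flux minimizer is in hand, everything reduces to Jensen and time-rescaling, and no substantive obstacle remains. The only point requiring a bit of care is the scaling law of $A'$ under the affine reparametrization, which follows cleanly from the quadratic dependence of $\alpha$ on its first variable in \eqref{def_alpha}.
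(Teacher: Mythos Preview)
Your proposal is correct and follows essentially the same approach as the paper: both use Jensen's inequality on $\mathfrak E^{a,b}_{\rm Quad}$ versus $\mathfrak E^{a,b}_{\rm LinSq}$ (invoking Lemma \ref{lem_frakELinSq_potential_rewrite}) to force the instantaneous speed $\ell(t)$ to be constant, and then verify the constant-speed geodesic identity on sub-intervals. The only cosmetic difference is that the paper argues directly that the restriction to $[s_0,s_1]$ must itself be optimal (else one could splice in a better curve and contradict global optimality), while you rescale to $[0,1]$ and close with the triangle inequality; both routes are standard and equivalent.
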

\begin{proof}
Fix a pair of $\mu_0,\mu_1$.
Let $(\mu_t,V_t,h_t)_{t\in[0,1]}$ be the minimizing curve in Theorem \ref{thm_minimizer},
    then we have
    \begin{equation*}
    \al W_p^{a,b}(\mu_0,\mu_1)^2 =  \mathfrak E^{a,b}_{\rm Quad}((\mu_t,V_t,h_t)_{t\in[0,1]}) \ge  \mathfrak E^{a,b}_{\rm LinSq}((\mu_t,V_t,h_t)_{t\in[0,1]}) \ge \al W_p^{a,b}(\mu_0,\mu_1)^2,
    \end{equation*}
    where we used Jensen's inequality to compare $ \mathfrak E^{a,b}_{\rm Quad}$ and $\mathfrak E^{a,b}_{\rm LinSq}$
    and Lemma \ref{lem_frakELinSq_potential_rewrite} for the last ``$\ge$''.
Thus the equality must hold, giving $\mathfrak E^{a,b}_{\rm Quad}((\mu_t,V_t,h_t)_{t\in[0,1]}) = \mathfrak E^{a,b}_{\rm LinSq}((\mu_t,V_t,h_t)_{t\in[0,1]})$, which is
    \begin{equation*}
    \int_0^1 a^2 | h_t |^2 
    +
    b^2 A'(\mu_t,V_t ) \text d t
    =
    \left(\int_0^1 \bigg[ a^2 | h_t |^2 
    +
    b^2 A'(\mu_t,V_t )\bigg]^{\frac{1}{2}}\text d t
    \right)^2.
    \end{equation*}
For the above equality to hold,
    the integrand $a^2 | h_t |^2 + b^2 A'(\mu_t,V_t )$
    has to be constant almost always.
In particular, for almost every $t\in[0,1]$,
    \begin{equation*}
    a^2 | h_t |^2 + b^2 A'(\mu_t,V_t ) =  (\al W_p^{a,b}(\mu_0,\mu_1))^2.
    \end{equation*}

Take an interval $[s_0,s_1]\subset[0,1]$.
The restriction $(\mu_t,V_t,h_t)_{t\in[s_0,s_1]}$ is an admissible trajectory in $\al{CE}_p(\mu_{s_0},\mu_{s_1};[s_0, s_1])$.
By optimality on $[0,1]$, $(\mu_t,V_t,h_t)_{t\in[s_0,s_1]}$ also 
    realizes $(\al W_p^{a,b}(\mu_{s_0},\mu_{s_1}))^2$ on $[s_0,s_1]$,
    otherwise one could find a better trajectory between $\mu_{s_0},\mu_{s_1}$ to replace $(\mu_t,V_t,h_t)_{t\in[s_0,s_1]}$,
    contradicting the optimality of $(\mu_t,V_t,h_t)_{t\in[0,1]}$.
Thus we have
    \begin{equation*}
    \al W_p^{a,b}(\mu_{s_0},\mu_{s_1})
    =
    \int_{s_0}^{s_1} \bigg[ a^2 | h_t |^2 
    +
    b^2 A'(\mu_t,V_t )\bigg]^{\frac{1}{2}}\text d t
    =(s_1-s_0) \al W_p^{a,b}(\mu_{0},\mu_{1}).
    \end{equation*}
\end{proof}

\subsection{Non-locality of geodesics}
\label{sec_nonlocality}
At this part, we show the non-locality of optimal trajectories:
    given any endpoints $\mu_0,\mu_1$,
    the geodesic $(\mu_t)_t$ is almost always supported on the whole chain.

The argument proceeds in four steps.
First, Lemma~\ref{lem_frontload} shows it is optimal to {front–load} mass creation: increasing mass earlier never increases the action.
Second, Lemma~\ref{lem:no_zero_platform} shows that an optimal curve cannot contain a non-negligible time set with vanishing source rate $h_t$. Otherwise one can construct a strictly better competitor.
Third, Theorem~\ref{thm_a.e._not_in_boundary} shows that whenever the curve has vanishing components, the source vanishes. 
Taken in the sense of time measure, these statements imply that optimal curves stay in the interior $\al M_{+}$ for almost all times.
Finally, as a by-product, this non-locality allows us to pass from flux–minimizers to potential-minimizers, recovering $\psi$ from $V$ along the optimal curve.

Let $f\in \mathrm L^{1}([0,1];\bb R)$ and let $\lambda$ denote the Lebesgue measure on $[0,1]$.
Define the {value–distribution measure} of $f$ by
    \begin{equation*}
    \lambda_{f}:=f_{\#}\lambda\quad\text{on }\mathbb{R},
    \end{equation*}
    its right-cumulative distribution function by
    \begin{equation*}
    F_{f}(\alpha):=\lambda_{f}((\alpha,\infty))=\lambda\{t\in[0,1]:\,f(t)> \alpha\},
    \end{equation*}
    and the right–continuous \emph{decreasing rearrangement} of $f$ by
    \begin{equation*}
    f^{\ast}(u):=\inf\bigl\{\alpha\in\mathbb{R}:\, F_f(\alpha)\le u\bigr\},\qquad u\in(0,1).
    \end{equation*}
Then $f$ and $f^{\ast}$ are \emph{equimeasurable}, i.e., $f_{\#}\lambda=(f^{\ast})_{\#}\lambda$, 
    and for any measurable $S\subset[0,1]$ with $\lambda(S)=t$
    \begin{equation}
    \label{ineq_majorization}
    \int_S f(s)\di s\le \int_0^t f^*(s)\di s.
    \end{equation}
For a concise account, see \cite[§1.4.1]{grafakos2008classical}.

\begin{lemma}
\label{lem_frontload}
Let $(\mu_t,V_t,h_t)_{t\in[0,1]}\in\mathcal{CE}_p(\mu_0,\mu_1;[0,1])$ be an admissible trajectory 
    and let $h^*_t$ be the decreasing rearrangement of $h_t$.
Let $\nu_t$ be the unique absolutely continuous solution of
    \begin{equation}
    \label{eq_nu_def}
    \dot\nu_t + \nabla\!\cdot V_t \;=\; h^*_t\,p,\qquad \nu_{0}=\mu_{0}.
    \end{equation}
Then $(\nu_t,V_t, h^*_t)_{t\in[0,1]}\in \mathcal{CE}_p(\mu_0,\mu_1;[0, 1])$ and the action does not increase:
    \begin{equation}
    \label{eq_nu_action_decrease}
    \mathfrak{E}^{a,b}_{\rm Quad}((\mu_t,V_t,h_t)_{t\in[0,1]})\;\ge\;\mathfrak{E}^{a,b}_{\rm Quad}((\nu_t,V_t, h^*_t)_{t\in[0,1]}).
    \end{equation}
\end{lemma}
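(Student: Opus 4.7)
The plan is to exploit three facts recalled just before the statement: equimeasurability of $h$ and $h^{*}$, the forward majorization $\int_{0}^{t}h^{*}_{s}\di s\ge\int_{0}^{t}h_{s}\di s$ for every $t\in[0,1]$ (apply \eqref{ineq_majorization} with $S=[0,t]$), and monotonicity of $\theta$ in each argument from \eqref{eq_logarithmic_mean}. Together these reduce the lemma to an edge-by-edge pointwise inequality once one shows that $\nu_{t}$ dominates $\mu_{t}$.

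First I would verify admissibility. Subtracting the continuity equation \eqref{eq_nu_def} for $\nu$ from the one satisfied by $\mu$ cancels both the initial datum and the divergence term, leaving
\[
\nu_{t}-\mu_{t}=p\int_{0}^{t}\bigl(h^{*}_{s}-h_{s}\bigr)\di s.
\]
The scalar factor is nonnegative for every $t$ by the majorization and vanishes at $t=1$ by equimeasurability. Since $p\in\al M_{+}$, this simultaneously yields $\nu_{t}\ge\mu_{t}\ge 0$ on all of $[0,1]$ and $\nu_{1}=\mu_{1}$. Absolute continuity of $\nu$ is immediate from the integral representation $\nu_{t}=\mu_{0}-\int_{0}^{t}\nabla\!\cdot V_{s}\di s+p\int_{0}^{t}h^{*}_{s}\di s$ together with the $\mathrm L^{1}$ regularity of $h^{*}$ and $V$ inherited from the finite original action, so $(\nu_{t},V_{t},h^{*}_{t})\in\mathcal{CE}_{p}(\mu_{0},\mu_{1};[0,1])$.

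Next I would split $\mathfrak E^{a,b}_{\rm Quad}$ into its source and flux pieces. Equimeasurability applied to $\alpha\mapsto\alpha^{2}$ gives $\int_{0}^{1}(h^{*}_{t})^{2}\di t=\int_{0}^{1}h_{t}^{2}\di t$, so the source contribution is unchanged. For the flux part, the bound $\nu_{t}\ge\mu_{t}$ combined with the monotonicity of $\theta$ in each argument yields $\hat\nu_{t}(x,y)\ge\hat\mu_{t}(x,y)$ on every ordered pair; since $\alpha(v,s,t)=v^{2}/\theta(s,t)$ is nonincreasing in $(s,t)$ (with the convention of \eqref{def_alpha} handling $\theta=0$), this gives
\[
\alpha\bigl(V_{t}(x,y),\nu_{t}(x),\nu_{t}(y)\bigr)\le\alpha\bigl(V_{t}(x,y),\mu_{t}(x),\mu_{t}(y)\bigr)
\]
for every edge and a.e.\ $t\in[0,1]$. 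Summing against $K(x,y)\varpi(x)/2$ and integrating in time produces $\int_{0}^{1}A'(\nu_{t},V_{t})\di t\le\int_{0}^{1}A'(\mu_{t},V_{t})\di t$, which combined with the invariance of the source term gives \eqref{eq_nu_action_decrease}. The only bookkeeping point is the degenerate edges where $\hat\mu_{t}(x,y)=0$: finiteness of $\mathfrak E^{a,b}_{\rm Quad}((\mu_{t},V_{t},h_{t})_{t\in[0,1]})$ forces $V_{t}(x,y)=0$ there, so both sides of the pointwise inequality vanish and the convention $v^{2}/0=\infty$ for $v\ne 0$ never plays against us. Beyond this I do not anticipate any genuine obstacle, since the entire argument is driven by standard rearrangement facts and the monotonicity of the logarithmic mean.
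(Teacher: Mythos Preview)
Your proof is correct and follows essentially the same approach as the paper: derive $\nu_t-\mu_t=p\int_0^t(h^*_s-h_s)\di s$ to obtain endpoint matching and the pointwise ordering $\nu_t\ge\mu_t$, use equimeasurability to preserve the source term, and then invoke monotonicity of $\theta$ edgewise. Your extra bookkeeping on degenerate edges is unnecessary (the convention $v^2/0=+\infty$ for $v\ne0$ already makes the pointwise inequality hold unconditionally), but it is harmless.
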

\begin{proof}
We split the proofs into four steps.
\vspace{0.2cm}

\noindent$\bullet$ {\textbf{Step 1. Endpoint matching.}}
By definition, $\nu_t$ solves~\eqref{eq_nu_def} with the {same} $V_t$ and the rearranged source $h^\ast_t$.  
By integration, we obtain the identities
    \begin{equation*}
    \mu_t-\mu_0=\int_0^t\bigl(h_s\,p-\nabla\!\cdot V_s\bigr)\di s,
    \qquad
    \nu_t-\mu_0=\int_0^t\bigl(h^\ast_s\, p-\nabla\!\cdot V_s\bigr)\di s.
    \end{equation*}
Thus
    \begin{equation}
    \label{eq_variation of constants}
    \nu_t-\mu_t=\Bigl(\int_0^t (h^\ast_s-h_s)\di s\Bigr)\,p.
    \end{equation}
Equimeasurability of $h_t$ and $h^\ast_t$ yields $\int_0^1 h_t\di t=\int_0^1 h^\ast_t\di t$, hence
    \begin{equation*}
    \nu_1-\mu_1=\Bigl(\int_0^1 (h^\ast_t-h_t)\di t\Bigr)\,p=0,
    \end{equation*}
    i.e. $\nu_1=\mu_1$. 
Thus $(\nu_t,V_t, h^*_t)_{t\in[0,1]}$ is admissible with the same endpoints.
\vspace{0.2cm}

\noindent$\bullet$ {\textbf{Step 2. Source action is preserved.}}
Equimeasurability also preserves $\mathrm L^2$ norms, therefore
    \begin{equation*}
    \int_0^1 |h^\ast_t|^2\di t=\int_0^1 |h_t|^2\di t.
    \end{equation*}
\vspace{0.2cm}

\noindent$\bullet$ {\textbf{Step 3. Pointwise transport comparison and integration.}}
Combining \eqref{ineq_majorization} and \eqref{eq_variation of constants}, we have, componentwise,
    \begin{equation*}
    \nu_t(x)\ge \mu_t(x)\quad\text{for a.e.\ }t\in(0,1).
    \end{equation*}
Hence \eqref{eq_logarithmic_mean} gives for a.e. $t$ and all $(x,y)$,
    \begin{equation*}
    \theta\bigl(\nu_t(x),\nu_t(y)\bigr)\ \ge\ \theta\bigl(\mu_t(x),\mu_t(y)\bigr),
    \end{equation*}
    and consequently
    \begin{equation*}
    \alpha\bigl(V_t(x,y),\nu_t(x),\nu_t(y)\bigr)
    =\frac{V_t(x,y)^2}{\theta(\nu_t(x),\nu_t(y))}
    \ \le\
    \frac{V_t(x,y)^2}{\theta(\mu_t(x),\mu_t(y))}
    =\alpha\bigl(V_t(x,y),\mu_t(x),\mu_t(y)\bigr).
    \end{equation*}
Here we adopt the convention that $v^2/0=+\infty$ for $v\neq 0$ and $v^2/0=0$ for $v=0$.
Summing with the nonnegative weights $K(x,y)\varpi(x)$ gives
    \begin{equation*}
    A'(\nu_t,V_t)\ \le\ A'(\mu_t,V_t)\quad\text{for a.e.\ }t\in(0,1).
    \end{equation*}
Integrating in time yields
    \begin{equation*}
    \int_0^1 A'(\nu_t,V_t)\di t\ \le\ \int_0^1 A'(\mu_t,V_t)\di t.
    \end{equation*}

\medskip
\noindent$\bullet$ {\textbf{Step 4. Conclusion.}}
Combining the transport comparison with the preservation of the source part,
    \begin{align*}
    \mathfrak{E}^{a,b}_{\rm Quad}((\nu_t,V_t, h^*_t)_{t\in[0,1]})
    &=
    a^{2}\!\int_0^1 |h^\ast_t|^2\di t
    +
    b^{2}\!\int_0^1 A'(\nu_t,V_t)\di t\\\
    & \le\
    a^{2}\!\int_0^1 |h_t|^2\di t
    +
    b^{2}\!\int_0^1 A'(\mu_t,V_t)\di t=
    \mathfrak{E}^{a,b}_{\rm Quad}((\mu_t,V_t,h_t)_{t\in[0,1]}),
    \end{align*}
    which is exactly~\eqref{eq_nu_action_decrease}.
\end{proof}

Consequently, the optimal source can not vanish for a positive measure of time.
\begin{lemma}
\label{lem:no_zero_platform}
For $\mu_0\neq\mu_1\in \al M$, let $(\mu_t,V_t,h_t)_{t\in[0,1]}\in\mathcal{CE}_p(\mu_0,\mu_1;[0,1])$ be an action-minimizing trajectory.
Then $h_t\neq0$ for a.e. $t\in[0,1]$.
\end{lemma}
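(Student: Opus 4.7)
The plan is to argue by contradiction: assume the set $E := \{t\in[0,1] : h_t = 0\}$ has positive Lebesgue measure. By Lemma~\ref{lem_frontload}, the rearranged triple $(\nu_t,V_t,h_t^*)$ lies in $\mathcal{CE}_p(\mu_0,\mu_1;[0,1])$ with action no greater than the original, and is therefore itself an action-minimizer. By Corollary~\ref{cor_geodesic}, it is a constant-speed geodesic, so
\begin{equation*}
a^2 (h_t^*)^2 + b^2 A'(\nu_t,V_t) \equiv \al W_p^{a,b}(\mu_0,\mu_1)^2 > 0 \quad \text{for a.e.\ } t\in[0,1],
\end{equation*}
where strict positivity uses $\mu_0\neq\mu_1$. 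Since $h_t^*$ is nonincreasing, the set $\{h_t^* = 0\}$ is an interval in $[0,1]$; by equimeasurability its measure equals $\lambda(E)>0$, so it contains a nontrivial subinterval $[a,b]$. On $[a,b]$, $h_t^*\equiv 0$ forces $A'(\nu_t,V_t)>0$ for a.e.\ $t$, and the finiteness of $A'$ on any edge $(x,y)$ with $V_t(x,y)\neq 0$ forces $\nu_t(x),\nu_t(y)>0$.

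I then construct a competitor that injects mass in the interior of $[a,b]$ and recovers it at the endpoints. Choose a $C^1$ bump $\phi:[a,b]\to[0,\infty)$ with $\phi(a)=\phi(b)=0$ and $\phi>0$ on $(a,b)$. For small $\eta>0$, define on $[a,b]$
\begin{equation*}
\tilde\nu_t := \nu_t + \eta\phi(t)\,p, \qquad \tilde V_t := V_t, \qquad \tilde h_t := \eta\dot\phi(t),
\end{equation*}
and splice this with the unchanged trajectory outside $[a,b]$. Verification of $\dot{\tilde\nu}_t + \nabla\cdot \tilde V_t = \tilde h_t p$ together with $\tilde\nu_a=\nu_a$, $\tilde\nu_b=\nu_b$ yields admissibility. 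Only the $[a,b]$-portion of the action is altered: the new source cost is $a^2\eta^2\int_a^b \dot\phi(t)^2\di t = O(\eta^2)$, while strict monotonicity of $\theta$ on $(0,\infty)^2$ together with $p>0$ yields, after first-order expansion on the edges where $\nu_t(x),\nu_t(y)>0$ (the only ones contributing to $A'$),
\begin{equation*}
A'(\tilde\nu_t,V_t) = A'(\nu_t,V_t) - \eta\,\phi(t)\,B_t + O(\eta^2),
\end{equation*}
with
\begin{equation*}
B_t = \frac{1}{2}\sum_{x,y:\,\nu_t(x),\nu_t(y)>0} \frac{V_t(x,y)^2 \bigl(\partial_1\theta\cdot p(x)+\partial_2\theta\cdot p(y)\bigr)}{\theta(\nu_t(x),\nu_t(y))^2}\, K(x,y)\varpi(x) > 0 \quad \text{for a.e.\ } t\in[a,b].
\end{equation*}
Integrating in time, the linear-in-$\eta$ contribution $-b^2\eta\int_a^b\phi(t) B_t\di t$ is strictly negative and dominates the $O(\eta^2)$ remainders for $\eta$ small, producing a spliced trajectory with strictly smaller action and contradicting optimality.

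The main obstacle is securing uniform control of the first-order expansion over $t\in[a,b]$ when $\nu_t$ may touch the boundary of $\al M$. The resolution is the observation already noted: on any edge where $V_t\neq 0$, both $\nu_t(x)$ and $\nu_t(y)$ are strictly positive, so the map $\eta\mapsto V_t(x,y)^2/\theta(\nu_t(x)+\eta\phi(t) p(x),\nu_t(y)+\eta\phi(t) p(y))$ is smooth at $\eta=0$; the uniform boundedness $A'(\nu_t,V_t)\equiv\al W_p^{a,b}(\mu_0,\mu_1)^2/b^2$ from the constant-speed identity then provides the integrable control of the $O(\eta^2)$ remainder needed to conclude.
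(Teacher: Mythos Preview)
Your argument follows the same route as the paper's: pass to the rearranged minimizer $(\nu_t,V_t,h_t^*)$ via Lemma~\ref{lem_frontload}, locate a nontrivial interval on which $h_t^*\equiv 0$, and perturb by injecting mass with zero net change so that the source cost is second order in the perturbation parameter while the transport cost drops at first order by strict monotonicity of $\theta$ and positivity of $p$. The paper uses the piecewise-constant perturbation $\tilde h^\varepsilon=\pm\varepsilon$ on the two halves of the interval; your smooth bump $\eta\dot\phi$ is a harmless variant.

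There is, however, a gap in your remainder control. You write the transport-cost change as $-\eta\phi(t)B_t+O(\eta^2)$ and justify integrability of the $O(\eta^2)$ term via the constant-speed identity $A'(\nu_t,V_t)\equiv \al W_p^{a,b}(\mu_0,\mu_1)^2/b^2$. This is not sufficient: on an edge with $V_t(x,y)\neq0$ you indeed have $\nu_t(x),\nu_t(y)>0$, but these values are not bounded away from zero uniformly in $t$, and the second derivatives of $(s,r)\mapsto 1/\theta(s,r)$ blow up as $s\wedge r\to 0$; uniform boundedness of $V_t^2/\theta$ alone does not dominate $V_t^2\cdot\partial^2(1/\theta)$. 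The clean fix, which is what the paper effectively does by computing a one-sided derivative, is to dispense with the second-order expansion: since $(s,r)\mapsto 1/\theta(s,r)=\int_0^1(\xi s+(1-\xi)r)^{-1}\di\xi$ is jointly convex, for each $t$ the difference quotient
\[
\eta^{-1}\bigl(A'(\nu_t+\eta\phi(t)p,V_t)-A'(\nu_t,V_t)\bigr)
\]
is nonpositive, nondecreasing in $\eta>0$, and decreases to $-\phi(t)B_t$ as $\eta\downarrow 0$. Monotone convergence applied to its negative then gives
\[
\lim_{\eta\downarrow 0}\eta^{-1}\int_a^b\bigl(A'(\tilde\nu_t,V_t)-A'(\nu_t,V_t)\bigr)\di t=-\int_a^b\phi(t)B_t\,\di t<0
\]
(possibly $-\infty$), which together with the exact source contribution $a^2\eta^2\int_a^b\dot\phi^2\,\di t$ yields the contradiction for all sufficiently small $\eta>0$.
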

\begin{proof}
Let $\lambda$ be the Lebesgue measure on $[0,1]$
    and assume by contradiction that $\lambda([h_t=0])>0$.
Here $[h_t=0]=\{t\in[0,1]\mid h_t=0\}$.
Then, we can construct some feasible trajectories $(\tilde\nu^\varepsilon_t,V_t,\tilde h^\varepsilon_t)_{t\in[0,1]}$ such that    
    \begin{equation*}
    \mathfrak{E}^{a,b}_{\rm Quad}((\tilde\nu^\varepsilon_t,V_t,\tilde h^\varepsilon_t)_{t\in[0,1]})
    <
    \mathfrak{E}^{a,b}_{\rm Quad}((\mu_t,V_t,h_t)_{t\in[0,1]}).
    \end{equation*}
This is contradictory to the optimality of $(\mu_t,V_t,h_t)_{t\in[0,1]}$.  
We split the proof into three steps.
\vspace{0.2cm}

\noindent$\bullet$ {\textbf{Step 1. Rearrange $h_t$ and keep $V_t$.}}
Let $h^\ast_t$ be the decreasing rearrangement of $h_t$, and let $\nu_t$ be the unique absolutely continuous solution of
    \begin{equation}
    \label{eq:nu_weak_CE}
    \dot\nu_t+\nabla\!\cdot V_t=h^\ast_t\,p,\qquad \nu_0=\mu_0.
    \end{equation}
By Lemma \ref{lem_frontload}, 
    we have $(\nu_t,V_t,h^\ast_t)_{t\in[0,1]}\in\mathcal{CE}_p(\mu_0,\mu_1;[0,1])$, and
    \begin{equation*}
    \mathfrak{E}^{a,b}_{\rm Quad}((\nu_t,V_t,h^\ast_t)_{t\in[0,1]})\le \mathfrak{E}^{a,b}_{\rm Quad}((\mu_t,V_t,h_t)_{t\in[0,1]}).
    \end{equation*}
Optimality of $(\mu_t,V_t,h_t)_{t\in[0,1]}$ forces equality, hence $(\nu_t,V_t,h^\ast_t)_{t\in[0,1]}$ is also optimal. 
Since $h_t$ and $h^\ast_t$ are equimeasurable, 
    $\lambda([h^\ast_t=0])=\lambda([h_t=0])>0$.
And from that $h^\ast_t$ is decreasing,
    there exists a nonempty open interval, call it $(t_0,t_1)$, such that
    \begin{equation*}
    h^\ast_t=0\quad\text{for a.e.\ }t\in(t_0,t_1).
    \end{equation*}
\vspace{0.1cm}

\noindent$\bullet$ {\textbf{Step 2. A localized admissible perturbation on the $0$-platform of $h^\ast_t$.}}
For $\varepsilon\ge 0$, define $\tilde h^\varepsilon_t$ by
    \begin{equation}
    \label{eq_no0pf_construct_h}
    \tilde{h}^\varepsilon_t :=
    \begin{cases}
    h^\ast_t, & t\notin(t_0,t_1),\\
    \varepsilon, & t\in\left(t_0,\frac{t_0+t_1}{2}\right),\\
    -\varepsilon, & t\in\left[\frac{t_0+t_1}{2},t_1\right),
    \end{cases}
    \end{equation}
    and define $\tilde\nu^\varepsilon_t$ by
    \begin{equation}
    \label{eq_no0pf_construct_nu}
    \tilde{\nu}^\varepsilon_t :=
    \begin{cases}
    \nu_t, & t\notin(t_0,t_1),\\
    \nu_t+(t-t_0)\,\varepsilon\,p, & t\in\left(t_0,\frac{t_0+t_1}{2}\right),\\
    \nu_t+(t_1-t)\,\varepsilon\,p, & t\in\left[\frac{t_0+t_1}{2},t_1\right).
    \end{cases}
    \end{equation}
A direct computation shows that
    \begin{equation*}
    \dot{\tilde{\nu}}^\varepsilon_t+\nabla\!\cdot V_t=\tilde h^\varepsilon_t\,p,\qquad
    \tilde\nu^\varepsilon_0=\nu_0=\mu_0,\qquad \tilde\nu^\varepsilon_1=\nu_1=\mu_1,
    \end{equation*}
hence $(\tilde\nu^\varepsilon_t,V_t,\tilde h^\varepsilon_t)_{t\in[0,1]}\in\mathcal{CE}_p(\mu_0,\mu_1;[0,1])$ 
    and $(\tilde\nu^0_t,V_t,\tilde h^0_t)_{t\in[0,1]}=(\nu_t,V_t,h^\ast_t)_{t\in[0,1]}$.
\vspace{0.3cm}

\noindent$\bullet$ {\textbf{Step 3. First variation of the action at $\varepsilon=0$.}}
By construction \eqref{eq_no0pf_construct_h}, on $(t_0,t_1)$ we have $\tilde h^0_t=0$, so
    \begin{equation}
    \label{eq_no0pf_variation@h}
    \frac{\di^+}{\di \varepsilon}\bigg|_{\varepsilon=0}\,a^2\!\int_{t_0}^{t_1}\!\big(\tilde h^\varepsilon_t\big)^2\di t
    =
    a^2\!\int_{t_0}^{t_1}\!2\,\tilde h^0_t\,\frac{\di^+}{\di \varepsilon}\bigg|_{\varepsilon=0}\tilde h^\varepsilon_t\di t
    =0,
    \end{equation}
    where $\frac{\di^+}{\di \varepsilon}|_{\ep=0}$ is the right derivative of $\ep$ at $0$.

For the transport part, Corollary \ref{cor_geodesic} states that 
    the optimal trajectory $(\nu_t,V_t,h^\ast_t)_{t\in[0,1]}$ has constant action.
In particular, since $h_t^\ast$ vanishes on $(t_0,t_1)$ we have
    \begin{equation*}
    0<\int_{t_0}^{t_1} A'(\nu_t,V_t)\,\mathrm dt<\infty.
    \end{equation*}
By the definition of $\alpha$ in \eqref{def_alpha}, for any $x,y$ with $K(x,y)>0$, the set
    \begin{equation*}
    \{t\in(t_0,t_1)\mid \alpha(V_t(x,y),\nu_t(x),\nu_t(y))=\infty\}=
    \{t\in(t_0,t_1)\mid V_t(x,y)\neq 0,\ \nu_t(x)\nu_t(y)=0\}
    \end{equation*}
    is negligible.
For each pair $(x,y)$ with $K(x,y)>0$, define the measurable sets
    \begin{gather*}
    Y_{x,y}^\circ:
    =\{t\in(t_0,t_1)\mid \alpha(V_t(x,y),\nu_t(x),\nu_t(y))=0\}
    =\{t\in(t_0,t_1)\mid V_t(x,y)=0\},
    \\
    Y_{x,y}^\bullet:=
    \{t\in(t_0,t_1)\mid \alpha(V_t(x,y),\nu_t(x),\nu_t(y))\in(0,\infty)\}
    =
    \{t\in(t_0,t_1)\mid V_t(x,y)\neq 0,\ \nu_t(x)\neq 0,\ \nu_t(y)\neq 0\}.
    \end{gather*}
Then $Y_{x,y}^\circ$ and $Y_{x,y}^\bullet$ are disjoint and 
    \begin{equation*}
    \lambda((t_0,t_1)\backslash (Y_{x,y}^\circ\cup Y_{x,y}^\bullet))=0.
    \end{equation*}

There exist $x^\bullet,y^\bullet$ with $K(x^\bullet,y^\bullet)>0$ such that 
    $\lambda\!\left(Y_{x^\bullet,y^\bullet}^\bullet\right)>0$.
Otherwise $\lambda\!\left(Y_{x,y}^\bullet\right)=0$ for all $x,y$ with $K(x,y)>0$, which gives
    \begin{align*}
    0<\int_{t_0}^{t_1} A'(\nu_t,V_t)\,\mathrm dt
    &=\frac{1}{2}\sum_{K(x,y)>0} K(x,y)\varpi(x)\int_{t_0}^{t_1} \alpha\!\left(V_t(x,y),\nu_t(x),\nu_t(y)\right)\,\mathrm dt \\
    &=\frac{1}{2}\sum_{K(x,y)>0} K(x,y)\varpi(x)\int_{Y_{x,y}^\bullet} \alpha\!\left(V_t(x,y),\nu_t(x),\nu_t(y)\right)\,\mathrm dt
    =0,
    \end{align*}
    a contradiction.

By \eqref{eq_no0pf_construct_nu}, for each $(x,y)$ with $K(x,y)>0$ and for $t\in Y_{x,y}^\bullet$,
    \begin{align*}
    \frac{\mathrm d^{+}}{\mathrm d\varepsilon}\bigg|_{\varepsilon=0}\,\frac{1}{\theta\!\left(\tilde{\nu}^\varepsilon_t(x),\tilde{\nu}^\varepsilon_t(y)\right)}
    = &
    -\frac{\partial_1\theta\!\left(\nu_t(x),\nu_t(y)\right)p(x)+\partial_2\theta\!\left(\nu_t(x),\nu_t(y)\right)p(y)}{\theta\!\left(\nu_t(x),\nu_t(y)\right)^2}
    \\
    &\times
    \left\{
    \begin{aligned}
    &t-t_0, && \text{if } t\in Y_{x,y}^\bullet\cap (t_0,\tfrac{t_0+t_1}{2}), \\
    &t_1-t, && \text{if } t\in Y_{x,y}^\bullet\cap [\tfrac{t_0+t_1}{2},t_1).\\
    \end{aligned}
    \right.
    \end{align*}
Using $p\in\mathcal M_+$ and $\theta$ is strictly increasing on $(0,\infty)\times(0,\infty)$,
    we have for $t\in Y_{x,y}^\bullet$,
    \begin{equation*}
     \frac{\mathrm d^{+}}{\mathrm d\varepsilon}\bigg|_{\varepsilon=0}\,\frac{1}{\theta\!\left(\tilde{\nu}^\varepsilon_t(x),\tilde{\nu}^\varepsilon_t(y)\right)}<0.
    \end{equation*}
Hence on $Y_{x,y}^\bullet$,
    \begin{equation*}
    \frac{\mathrm d^{+}}{\mathrm d\varepsilon}\bigg|_{\varepsilon=0}\,\alpha\!\left(V_t(x,y),\tilde{\nu}^\varepsilon_t(x),\tilde{\nu}^\varepsilon_t(y)\right)
    =
    \frac{\mathrm d^{+}}{\mathrm d\varepsilon}\bigg|_{\varepsilon=0}\,\frac{V_t(x,y)^2}{\theta\!\left(\tilde{\nu}^\varepsilon_t(x),\tilde{\nu}^\varepsilon_t(y)\right)}
    <0.
    \end{equation*}
On $Y_{x,y}^\circ$ we have for every $\varepsilon\ge 0$,
    \begin{equation*}
    \alpha\!\left(V_t(x,y),\tilde{\nu}^\varepsilon_t(x),\tilde{\nu}^\varepsilon_t(y)\right)
    =\alpha\!\left(0,\tilde{\nu}^\varepsilon_t(x),\tilde{\nu}^\varepsilon_t(y)\right)
    =0,
    \end{equation*}
    and therefore on $Y_{x,y}^\circ$,
    \begin{equation*}
    \frac{\mathrm d^{+}}{\mathrm d\varepsilon}\bigg|_{\varepsilon=0}\,\alpha\!\left(V_t(x,y),\tilde{\nu}^\varepsilon_t(x),\tilde{\nu}^\varepsilon_t(y)\right)=0.
    \end{equation*}
By the dominated convergence theorem,
    \begin{equation}
    \label{eq_no0pf_variation@nu}
    \begin{aligned}
    \frac{\mathrm d^{+}}{\mathrm d\varepsilon}\bigg|_{\varepsilon=0}\int_{t_0}^{t_1} A'(\tilde{\nu}^\varepsilon_t,V_t)\,\mathrm dt
    &=
    \frac{\mathrm d^{+}}{\mathrm d\varepsilon}\bigg|_{\varepsilon=0}\int_{t_0}^{t_1} \frac{1}{2}\sum_{K(x,y)>0}
    \alpha\!\left(V_t(x,y),\tilde{\nu}^\varepsilon_t(x),\tilde{\nu}^\varepsilon_t(y)\right) K(x,y)\varpi(x)\,\mathrm dt \\
    &=
    \frac{1}{2}\sum_{K(x,y)>0} K(x,y)\varpi(x)\int_{t_0}^{t_1}
    \frac{\mathrm d^{+}}{\mathrm d\varepsilon}\bigg|_{\varepsilon=0}\,\alpha\!\left(V_t(x,y),\tilde{\nu}^\varepsilon_t(x),\tilde{\nu}^\varepsilon_t(y)\right)\,\mathrm dt \\
    &=
    \frac{1}{2}\sum_{K(x,y)>0} K(x,y)\varpi(x)\int_{Y_{x,y}^\bullet}
    \frac{\mathrm d^{+}}{\mathrm d\varepsilon}\bigg|_{\varepsilon=0}\,\alpha\!\left(V_t(x,y),\tilde{\nu}^\varepsilon_t(x),\tilde{\nu}^\varepsilon_t(y)\right)\,\mathrm dt \\
    &\le
    \frac{1}{2}\,K(x^\bullet,y^\bullet)\varpi(x^\bullet)\int_{Y_{x^\bullet,y^\bullet}^\bullet}
    \frac{\mathrm d^{+}}{\mathrm d\varepsilon}\bigg|_{\varepsilon=0}\,\alpha\!\left(V_t(x^\bullet,y^\bullet),\tilde{\nu}^\varepsilon_t(x^\bullet),\tilde{\nu}^\varepsilon_t(y^\bullet)\right)\,\mathrm dt \\
    &<0.
    \end{aligned}
    \end{equation}

Combining \eqref{eq_no0pf_variation@h} and \eqref{eq_no0pf_variation@nu} yields
    \begin{equation*}
    \frac{\di^+}{\di \varepsilon}\bigg|_{\varepsilon=0}\,\mathfrak{E}^{a,b}_{\rm Quad}
    ((\tilde\nu^\varepsilon_t,V_t,\tilde h^\varepsilon_t)_{t\in[0,1]})
    =
    0\;+\;\frac{\di^+}{\di \varepsilon}\bigg|_{\varepsilon=0}\,b^2\!\int_{t_0}^{t_1}\!A'(\tilde\nu^\varepsilon_t,V_t)\di t
    <0.
    \end{equation*}
Therefore, for all sufficiently small $\varepsilon>0$,
    \begin{equation*}
    \mathfrak{E}^{a,b}_{\rm Quad}((\tilde\nu^\varepsilon_t,V_t,\tilde h^\varepsilon_t)_{t\in[0,1]})
    <
    \mathfrak{E}^{a,b}_{\rm Quad}((\nu_t,V_t,h^\ast_t)_{t\in[0,1]})
    =
    \mathfrak{E}^{a,b}_{\rm Quad}((\mu_t,V_t,h_t)_{t\in[0,1]}),
    \end{equation*}
    contradicting the optimality of $(\mu_t,V_t,h_t)_{t\in[0,1]}$. 
Hence $\lambda([h=0])=0$.
\end{proof}

The next result exploits the fact that for a trajectory to stay in the boundary, the source must vanish simultaneously.
\begin{theorem}[Non-locality]
\label{thm_a.e._not_in_boundary}
For $\mu_0\neq\mu_1$ in $\al M$, 
    let $(\mu_t,V_t,h_t)_{t\in[0,1]}\in \mathcal{CE}_p(\mu_0,\mu_1;[0,1])$ satisfy
    \begin{equation*}
    \mathfrak{E}^{a,b}_{\rm Quad}((\mu_t,V_t,h_t)_{t\in[0,1]})=\al W_p^{a,b}(\mu_0,\mu_1)^2,
    \end{equation*}
    then we have $\mu_t\in \al M_+$ for a.e. $t$.
\end{theorem}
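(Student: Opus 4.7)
The plan is to argue by contradiction: if the set $E=\{t\in[0,1]\mid\mu_t\notin\al M_+\}$ has positive Lebesgue measure, then I will show that $h_t$ must vanish on a set of positive measure, contradicting Lemma~\ref{lem:no_zero_platform}. The conceptual content is that whenever the trajectory touches the boundary $\partial\al M$, the source term is forced to zero by the continuity equation, and this happens because both $\dot\mu_t$ and the divergence of the flux must vanish at vertices where $\mu_t$ is zero.

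First, I would localize the failure to a single vertex. Writing $E=\bigcup_{x\in\al X}E_x$ with $E_x=\{t:\mu_t(x)=0\}$, if $\lambda(E)>0$ then finiteness of $\al X$ yields some $x^\star$ with $\lambda(E_{x^\star})>0$. Next, since $\mu_t$ is absolutely continuous and nonnegative, the map $t\mapsto \mu_t(x^\star)$ attains its minimum value $0$ on $E_{x^\star}$, so by the standard fact that an AC function has vanishing derivative almost everywhere on the set where it attains a minimum, I obtain
\begin{equation*}
\dot\mu_t(x^\star)=0\qquad\text{for a.e.\ }t\in E_{x^\star}.
\end{equation*}

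The second ingredient is the pointwise vanishing of the flux at $x^\star$ on $E_{x^\star}$. Because the action $\mathfrak E^{a,b}_{\rm Quad}$ is finite, the convention $v^2/0=+\infty$ for $v\neq 0$ in the definition of $\alpha$ forces, for each pair $(x,y)$ with $K(x,y)>0$,
\begin{equation*}
V_t(x,y)=0\qquad\text{whenever }\theta(\mu_t(x),\mu_t(y))=0,
\end{equation*}
up to a time-negligible set. Since $\theta(0,\cdot)=\theta(\cdot,0)=0$, at any $t\in E_{x^\star}$ (up to a null set) we have $V_t(x^\star,y)=V_t(y,x^\star)=0$ for every $y$ with $K(x^\star,y)>0$, where I use that $K$ is symmetric in support (via reversibility~\eqref{eq_markov_reversible}). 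Hence $(\nabla\!\cdot V_t)(x^\star)=\frac12\sum_y(V_t(x^\star,y)-V_t(y,x^\star))K(x^\star,y)=0$ on $E_{x^\star}$ a.e.

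Evaluating the continuity equation~\eqref{new_ce} at $x=x^\star$ on $E_{x^\star}$ therefore gives
\begin{equation*}
0=\dot\mu_t(x^\star)+(\nabla\!\cdot V_t)(x^\star)=h_t\,p(x^\star)\qquad\text{for a.e.\ }t\in E_{x^\star}.
\end{equation*}
Since $p\in\al M_+$, we have $p(x^\star)>0$, so $h_t=0$ for a.e.\ $t\in E_{x^\star}$. This contradicts Lemma~\ref{lem:no_zero_platform}, which asserts that $h_t\neq 0$ a.e.\ whenever $\mu_0\neq\mu_1$ and $(\mu_t,V_t,h_t)_{t\in[0,1]}$ is an action minimizer. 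Consequently $\lambda(E)=0$, i.e., $\mu_t\in\al M_+$ for a.e.\ $t$.

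The main technical point to be careful about is the passage from "finite action" to "$V_t(x^\star,\cdot)=V_t(\cdot,x^\star)=0$ a.e.\ on $E_{x^\star}$": this relies on exchanging the order of the time integral with the finite sum over edges, and on the symmetry of $\theta$ to handle both $V_t(x^\star,y)$ and $V_t(y,x^\star)$. The AC/minimum argument for $\dot\mu_t(x^\star)=0$ on $E_{x^\star}$ is standard but deserves an explicit citation. Everything else reduces to a careful read of the continuity equation at the distinguished vertex.
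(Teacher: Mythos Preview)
Your proposal is correct and follows essentially the same approach as the paper: both localize to a single vertex $x^\star$ with $\lambda(\{t:\mu_t(x^\star)=0\})>0$, use finiteness of the action to kill the flux at $x^\star$, use a Lebesgue density/level-set argument to get $\dot\mu_t(x^\star)=0$ a.e.\ on that set, and then read off $h_t=0$ from the continuity equation to contradict Lemma~\ref{lem:no_zero_platform}. The only cosmetic difference is that the paper spells out the density-point argument for $\dot\mu_t(x^\star)=0$ explicitly, whereas you invoke the standard fact that an AC function has vanishing derivative a.e.\ on a level set; your own remark that this ``deserves an explicit citation'' is well taken.
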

\begin{proof}
Let $Z_x:=\{t\in[0,1] \mid \mu_t(x)=0\}$ for $x\in \al X$ be the time set such that $\mu_t$ touches the boundary at component $x$.

Assume without loss of generality that $V_t(x,y)=0$ whenever $K(x,y)=0$.
The definition of $A'$ and the reversibility \ref{eq_markov_reversible} implies that, 
    \begin{align*}
    \int_0^1 A'(\mu_t,V_t) \di t
    \ge & \
    \frac{1}{4}\int_{Z_x}  \sum_{\substack{y\in \al X,\\K(x,y)\neq 0}}
    [\alpha(V_t(x,y),\mu_t(x),\mu_t(y))+\alpha(V_t(y,x),\mu_t(y),\mu_t(x))]
    K(x,y)\varpi(x) \di t
    \\
    = & \
    \frac{1}{4}\int_{Z_x}  \sum_{\substack{y\in \al X,\\K(x,y)\neq 0}}
    [\alpha(V_t(x,y),0,\mu_t(y))+\alpha(V_t(y,x),\mu_t(y),0)]
    K(x,y)\varpi(x) \di t.
    \end{align*}
Thus for $\int_0^1 A'(\mu_t,V_t) \di t$ to be finite, 
    we have $V_t(x,y)=V_t(y,x)=0$ for almost every $t\in Z_x$, if $K(x,y)>0$.
Recalling the continuity function \eqref{new_ce}, we have for almost every $t\in Z_x$,
    \begin{equation}
    \label{eq_a.e._not_in_boundary_1}
   \dot \mu_t(x)=\frac{1}{2}\sum_{y\in \al X}(V_t(y,x)-V_t(x,y))K(x,y) + h_t \, p(x)=h_t p(x).
    \end{equation}
    
Define
    \begin{equation*}
    Z_x^\circ:=\{\,t\in Z_x:\ t \text{ is a Lebesgue density point of } Z_x \text{ and a Lebesgue point of } \dot\mu_t(x)\,\}.
    \end{equation*}
By the Lebesgue density and differentiation theorems, 
    $Z_x\setminus Z_x^\circ$ is negligible.
Fix $t\in Z_x^\circ$.
The definition of Lebesgue density points gives that
    \begin{equation*}
    \lim_{r\downarrow0} \frac{\lambda( Z_x^\circ \cap (t-r,t+r))}{2r} = 1.
    \end{equation*}
Then for sufficiently small $r$,
    \begin{equation*}
    \lambda\big(Z_x^\circ\cap(t-r,t)\big)>\tfrac{1}{2}r
    \quad\text{and}\quad
    \lambda\big(Z_x^\circ\cap(t,t+r)\big)>\tfrac{1}{2}r,
    \end{equation*}
    hence both intersections are nonempty.
Choose $a_r\in Z_x^\circ\cap(t-r,t)$ and $b_r\in Z_x^\circ\cap(t,t+r)$, and set $I_r:=(a_r,b_r)$.
Then
    \begin{equation*}
    a_r,b_r\in Z_x^\circ,\qquad t\in I_r,\qquad |b_r-a_r|\le 2r\ \longrightarrow\ 0\quad\text{as }r\downarrow 0,
    \end{equation*}
    which is the desired family of shrinking intervals with endpoints in $Z_x^\circ$.
Utilizing that $t$ is a Lebesgue point of $\dot\mu_t(x)$, and the fact that $\mu_{b_r}(x)=\mu_{a_r}(x)=0$,
    \begin{equation*}
    \dot\mu_t(x)=\lim_r\frac{1}{b_r-a_r} \int_{a_r}^{b_r}\dot\mu_s(x) \di s
    =\lim_r \frac{\mu_{b_r}(x)-\mu_{a_r}(x)}{b_r-a_r}=0.
    \end{equation*}
Thus for almost every $t\in Z_x$, \eqref{eq_a.e._not_in_boundary_1} gives $h_t=0$.
In other words,
    \begin{equation*}
    \lambda(Z_x)\le \lambda([h_t=0]).
    \end{equation*}
However by Lemma \ref{lem:no_zero_platform}, $\lambda([h_t=0])=0$, 
    otherwise there exists a strictly better trajectory.
Therefore, for any $x\in \al X$, $Z_x$ is a null set.  
\end{proof}

Finally, we can retrieve an optimal $\psi_t$ from $V_t$.
\begin{corollary}
\label{cor_minimizer}
For $\mu_0,\mu_1\in \al M$,
    there exists $(\mu_t,\psi_t,h_t)_{t\in[0,1]}\in\mathrm{CE}_p(\mu_0,\mu_1;[0,1])$ 
    such that 
    \begin{equation*}
        {\al W}_p^{a,b}(\mu_0,\mu_1)^2=E^{a,b}_{\rm Quad}((\mu_t,\psi_t,h_t)_{t\in[0,1]}).
    \end{equation*}
\end{corollary}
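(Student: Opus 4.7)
The plan is to promote the flux-form minimizer of Theorem~\ref{thm_minimizer} to a potential-form minimizer by exploiting the strict positivity of optimal curves. If $\mu_0=\mu_1$, the constant triple $(\mu_0,0,0)$ lies in $\mathrm{CE}_p(\mu_0,\mu_1;[0,1])$ and realizes $\al W_p^{a,b}(\mu_0,\mu_1)^2=0$, so I may assume $\mu_0\neq\mu_1$. Theorem~\ref{thm_minimizer} then furnishes $(\mu_t,V_t,h_t)_{t\in[0,1]}\in\mathcal{CE}_p(\mu_0,\mu_1;[0,1])$ with $\mathfrak E^{a,b}_{\rm Quad}((\mu_t,V_t,h_t)_{t\in[0,1]})=\al W_p^{a,b}(\mu_0,\mu_1)^2$, and Theorem~\ref{thm_a.e._not_in_boundary} yields $\mu_t\in\al M_+$ for a.e.\ $t\in(0,1)$.

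At each such $t$ I apply the second part of Lemma~\ref{lem_ce_solver_combined} with $\rho=\dot\mu_t$ to obtain a unique pair $(\nabla\psi_t,\tilde h_t)\in\mathscr G_{\mu_t}\times\bb R$ satisfying $\dot\mu_t+\nabla_{\!\mu_t}\!\cdot\nabla\psi_t=\tilde h_t\,p$. Testing this identity and the original continuity equation against the constant function $1$ in the $\varpi$-pairing, using $\|p\|_{\varpi,1}=1$ and $\langle\nabla\!\cdot\Phi,1\rangle_{\varpi}=0$, one finds $\tilde h_t=\tfrac{d}{dt}\|\mu_t\|_{\varpi,1}=h_t$, so
\begin{equation*}
\nabla\!\cdot V_t=\nabla_{\!\mu_t}\!\cdot\nabla\psi_t.
\end{equation*}

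Set $W_t:=V_t/\hat\mu_t\in\mathscr G_{\mu_t}$, which is well-defined since $\hat\mu_t(x,y)>0$ whenever $K(x,y)>0$, and $R_t:=W_t-\nabla\psi_t$. The previous display gives $\nabla_{\!\mu_t}\!\cdot R_t=0$, hence \eqref{eq_int_by_parts} yields $\langle R_t,\nabla\phi\rangle_{\mu_t}=0$ for every $\phi\in\bb R^{\al X}$, so Pythagoras in $\mathscr G_{\mu_t}$ delivers the pointwise bound
\begin{equation*}
A'(\mu_t,V_t)=\|W_t\|_{\mu_t}^2=\|\nabla\psi_t\|_{\mu_t}^2+\|R_t\|_{\mu_t}^2\;\ge\;\|\nabla\psi_t\|_{\mu_t}^2.
\end{equation*}
Integrating in $t$ and adding the source part,
\begin{equation*}
E^{a,b}_{\rm Quad}((\mu_t,\psi_t,h_t)_{t\in[0,1]})\le \mathfrak E^{a,b}_{\rm Quad}((\mu_t,V_t,h_t)_{t\in[0,1]})=\al W_p^{a,b}(\mu_0,\mu_1)^2,
\end{equation*}
while the reverse inequality is built into Definition~\ref{def_Wpab}, forcing equality.

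The main technical obstacle is the measurability of $t\mapsto\psi_t$, which is required in order for $(\mu_t,\psi_t,h_t)_{t\in[0,1]}$ to belong to $\mathrm{CE}_p(\mu_0,\mu_1;[0,1])$. Since $\al M_+$ is open in $\al M$ and the solution operator $D_\mu$ of Lemma~\ref{lem_ce_solver_combined} depends continuously on $\mu\in\al M_+$, normalizing $\psi_t$ by $\langle\psi_t,\varpi\rangle=0$ produces a measurable selection from the measurable pair $(\mu_t,\dot\mu_t)$ on the full-measure set where $\mu_t\in\al M_+$; on the complementary null set one may set $\psi_t=0$ without altering any integral.
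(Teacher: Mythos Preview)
Your proof is correct and follows essentially the same route as the paper: take the flux minimizer, use the a.e.\ interior property from Theorem~\ref{thm_a.e._not_in_boundary} to form $W_t=V_t/\hat\mu_t$, project onto gradients, and invoke Pythagoras in $\mathscr G_{\mu_t}$ to compare actions. The paper obtains $\psi_t$ via the orthogonal projection $\mathscr P_{\mu_t}$ and cites the construction in Appendix~\ref{sec_app_W_rewrite}, whereas you obtain it from Lemma~\ref{lem_ce_solver_combined} applied to $\dot\mu_t$ and then verify orthogonality; these produce the same $\nabla\psi_t$. Your explicit treatment of the trivial case $\mu_0=\mu_1$ (needed since Theorem~\ref{thm_a.e._not_in_boundary} assumes $\mu_0\neq\mu_1$) is a small nicety the paper's proof leaves implicit.
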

\begin{proof}
Let $(\mu_t,V_t,h_t)_{t\in[0,1]}\in \mathcal{CE}_p(\mu_0,\mu_1;[0,1])$ be the  minimizer guaranteed by Theorem \ref{thm_minimizer}.
By Theorem \ref{thm_a.e._not_in_boundary}, for any $x,y\in \al X$, $\hat\mu_t(x,y)$ is a.e. positive.
Thus we may define for a.e. $t\in[0,1]$,
    \begin{equation*}
    \Psi_t(x,y):=\frac{V_t(x,y)}{\hat\mu_t(x,y)}.
    \end{equation*}
Let $\mathscr P_{\mu_t}$ be the orthogonal projection in $\mathscr{G}_{\mu_t}$ onto the range of $\nabla$.
Similar to Step 2.3 of Appendix \ref{sec_app_W_rewrite},
there exists a measurable function $\psi_t:[0,1]\rightarrow \bb R^{\al X}$ such that the following arguments hold almost everywhere:
    \begin{equation*}
        \mathscr P_{\mu_t} \Psi_t = \nabla \psi_t,
        \quad
        \nabla \cdot  V_t = \nabla \cdot ({\hat \mu_t}  \nabla\psi_t)
        \quad
        \text{and}
        \quad
        [A_{\mu_t}\psi_t,\psi_t]\le A'(\mu_t,V_t).
    \end{equation*}
Thus we have that $(\mu_t,\psi_t,h_t)_{t\in[0,1]}\in\mathrm{CE}_p(\mu_0,\mu_1;[0,1])$
    and ${\al W}_p^{a,b}(\mu_0,\mu_1)^2=E^{a,b}_{\rm Quad}((\mu_t,\psi_t,h_t)_{t\in[0,1]})$.
\end{proof}

\subsection{Comparison with the shift--transport metric}
\label{sec_compare}
In this part we use the non-locality results in the above Section \ref{sec_nonlocality} to compare $\al W_p^{a,b}$ and the shift--transport metric.

In \cite{piccoli2014generalized}, the authors introduced a generalized Wasserstein distance 
    between nonnegative measures on Euclidean space. 
This was achieved by first shifting the two measures to have the same mass 
    and then considering the classical optimal transport problem. 
In their subsequent work \cite{piccoli2016properties}, 
    they established the Benamou--Brenier characterization for this generalized Wasserstein distance. 
Our approach, however, proceeds in the reverse direction: 
    we begin with the Benamou--Brenier characterization. 
This naturally raises the question: 
    \emph{Does our definition admit a reformulation in terms of shifting and transport?}
This subsection provides a negative answer to this question.

\begin{definition}
\label{def_Dpab}
For $\mu_0,\mu_1\in \al M$, the admissible set of shift--transport quadruplet is denoted 
    ${\rm ST}_p (\mu_0,\mu_1;[0,1])$ with elements $(H_0,H_1,(\nu_t)_{t\in[0,1]},(U_t)_{t\in[0,1]})$ satisfying
    \begin{equation}
    \left\{
    \begin{aligned}
    & H_0, H_1\in \bb{R},
    \\
    &
    \mu_0+H_0  p,\mu_1+H_1  p\in \al M,
    \\
    & \| \mu_0\|_{\varpi,1} + H_0  = \| \mu_1 \|_{\varpi,1} +H_1,
    \\
    & \nu:[0,1] \rightarrow \al M ~\text{is absolutely continuous}~ C^1,
    \\
    & U:[0,1]\rightarrow\mathbb R^{\al X \times \al X} ~\text{is measurable},
    \\
    & \left\{
    \begin{aligned}
    &\dot\nu_t+\nabla \cdot U_t=0, \\
    &\nu_t |_{t=0} = \mu_0+H_0  p,\quad \nu_t |_{t=1}=\mu_1 +H_1  p.
    \end{aligned}
    \right.
    \end{aligned}
    \right.
    \end{equation}
The shift--transport metric between $\mu_0,\mu_1$ is given as follows
    \begin{equation}
    \begin{gathered}
    \begin{aligned}
    D^{a,b}_p(\mu_0,\mu_1):=\inf
    \bigg\{ &
    \big(\mathcal{E}^{a,b}_{\rm Quad}(H_0,H_1,(\nu_t)_{t\in[0,1]},(U_t)_{t\in[0,1]})\big)^{\frac{1}{2}}
    \\
    & \bigg|~
    (H_0,H_1,(\nu_t)_{t\in[0,1]},(U_t)_{t\in[0,1]})\in {\rm ST}_p (\mu_0,\mu_1;[0,1])
    \bigg\},
    \end{aligned}
    \\
    \mathcal{E}^{a,b}_{\rm Quad}(H_0,H_1,(\nu_t)_{t\in[0,1]},(U_t)_{t\in[0,1]}):=
    a^2 (|H_0|+|H_1|)^2
    +b^2 \int_0^1 A'(\nu_t,U_t)\di t.
    \end{gathered}
    \end{equation}
\end{definition}

\begin{lemma}
\label{lem_D_rewrite}
$D^{a,b}_p$ defines a metric on $\al M$ and it admits the following reformulation
    \begin{equation}
    \label{eq_D_rewrite_E1}
    \begin{gathered}
    \begin{aligned}
    D^{a,b}_p(\mu_0,\mu_1)=\inf
    \bigg\{ &
    \big(\mathcal{E}^{a,b}_{\rm LinSq}(H_0,H_1,(\nu_t)_{t\in[0,1]},(U_t)_{t\in[0,1]})\big)^\frac{1}{2} 
    \\
    &\qquad\qquad\quad\bigg|~
    (H_0,H_1,(\nu_t)_{t\in[0,1]},(U_t)_{t\in[0,1]})\in {\rm ST}_p (\mu_0,\mu_1;[0,1])
    \bigg\},
    \end{aligned}    
    \\
    \mathcal{E}^{a,b}_{\rm LinSq}(H_0,H_1,(\nu_t)_{t\in[0,1]},(U_t)_{t\in[0,1]}):=
    a^2 (|H_0|+|H_1|)^2
    +b^2\left( \int_0^1 \left( A'(\nu_t,U_t)\right)^{\frac{1}{2}}  \di t \right)^2,
    \end{gathered}
    \end{equation}
    and
    \begin{equation}
    \label{eq_D_rewrite_F}
    \begin{gathered}
    D^{a,b}_p(\mu_0,\mu_1)=\inf
    \left\{
    \big(F^{a,b}((\mu_t,V_t,h_t)_{t\in[0,1]})\big)^{\frac{1}{2}} ~\big|~
    (\mu_t,V_t,h_t)_{t\in[0,1]}\in {\al C \al E}_p (\mu_0,\mu_1;[0,1])
    \right\},
    \\
    F^{a,b}((\mu_t,V_t,h_t)_{t\in[0,1]}):=
    a^2 \left( \int_0^1 |h_t|\di t \right) ^2
    +b^2 \left(\int_0^1  (A'(\mu_t,V_t))^{\frac{1}{2}}  \di t\right) ^2.
    \end{gathered}
    \end{equation}
\end{lemma}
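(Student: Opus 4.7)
The plan is to establish the three assertions---the reformulation \eqref{eq_D_rewrite_E1}, the reformulation \eqref{eq_D_rewrite_F}, and the metric axioms---in that order, verifying the axioms inside the continuity-equation formulation \eqref{eq_D_rewrite_F}, where concatenation and time-reversal are most transparent.

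For \eqref{eq_D_rewrite_E1}, the shift term $a^{2}(|H_0|+|H_1|)^{2}$ is common to $\mathcal{E}^{a,b}_{\rm Quad}$ and $\mathcal{E}^{a,b}_{\rm LinSq}$, so only the transport parts need comparison. Cauchy--Schwarz applied to $t\mapsto (A'(\nu_t,U_t))^{1/2}$ gives $\mathcal{E}^{a,b}_{\rm LinSq}\le \mathcal{E}^{a,b}_{\rm Quad}$ pointwise on ${\rm ST}_p(\mu_0,\mu_1;[0,1])$, hence the corresponding inequality between infima. For the reverse direction, I will reparametrize a near-optimal $(\nu,U)$ by its $(A')^{1/2}$-arc length to obtain a constant-speed pair $(\tilde\nu,\tilde U)$; the shifts $(H_0,H_1)$ and the integral $\int_0^{1}(A'(\nu_t,U_t))^{1/2}\,\di t$ are invariant under this change of variables, while the Cauchy--Schwarz step becomes an equality, yielding $\mathcal{E}^{a,b}_{\rm Quad}(H_0,H_1,\tilde\nu,\tilde U)=\mathcal{E}^{a,b}_{\rm LinSq}(H_0,H_1,\nu,U)$.

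For \eqref{eq_D_rewrite_F}, the key is to translate between shift--transport quadruplets and continuity equations with source in both directions. Given $(H_0,H_1,\nu,U)\in{\rm ST}_p$, a three-phase construction on $[0,\varepsilon]\cup[\varepsilon,1-\varepsilon]\cup[1-\varepsilon,1]$---linearly shift $\mu_0$ to $\mu_0+H_0 p$ with constant source $h_t=H_0/\varepsilon$ and zero flux, then run a time-rescaled copy of $(\nu,U)$ with $h\equiv 0$, then linearly shift $\mu_1+H_1 p$ back to $\mu_1$ with source $h_t=-H_1/\varepsilon$---produces an element of $\mathcal{CE}_p(\mu_0,\mu_1;[0,1])$ satisfying $F^{a,b}=\mathcal{E}^{a,b}_{\rm LinSq}$ exactly, since both $\int|h_t|\,\di t$ and $\int(A'(\mu_t,V_t))^{1/2}\,\di t$ are reparametrization-invariant. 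In the opposite direction, given $(\mu,V,h)\in\mathcal{CE}_p$, I introduce the running maximum $H_0:=\sup_{t\in[0,1]}\int_0^{t}h_s\,\di s\ge 0$, the residual $H_1:=H_0-\int_0^{1}h_s\,\di s\ge 0$, and the lifted curve $\tilde\mu_t:=\mu_t+\bigl(H_0-\int_0^{t}h_s\,\di s\bigr)p$. A direct computation yields $\dot{\tilde\mu}_t+\nabla\!\cdot V_t=0$ with the correct endpoints, placing $(H_0,H_1,\tilde\mu,V)$ in ${\rm ST}_p$. The desired inequality $\mathcal{E}^{a,b}_{\rm LinSq}(H_0,H_1,\tilde\mu,V)\le F^{a,b}(\mu,V,h)$ then rests on two monotonicity observations: the elementary bound $H_0\le\int_0^{1}h_s^{+}\,\di s$ forces $|H_0|+|H_1|\le\int_0^{1}|h_s|\,\di s$, and the coordinatewise inequality $\tilde\mu_t\ge\mu_t\ge 0$ combined with the monotonicity of $\theta$ on $(0,\infty)^{2}$ gives $A'(\tilde\mu_t,V_t)\le A'(\mu_t,V_t)$ pointwise. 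This shifting argument---which uses the same monotonicity of $\theta$ that powered Lemma \ref{lem_frontload}---is the technical heart of the proof and the step I expect to require the most care.

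With \eqref{eq_D_rewrite_F} available, the metric axioms become transparent. Symmetry follows from the time-reversal $(\mu_t,V_t,h_t)\mapsto(\mu_{1-t},-V_{1-t},-h_{1-t})$, which preserves both integrals in $F^{a,b}$; reflexivity is immediate from the stationary curve $\mu_t\equiv \mu_0$, $V\equiv 0$, $h\equiv 0$. For the triangle inequality, I will take near-optimal curves $(\mu^{ij},V^{ij},h^{ij})$ realizing $D^{a,b}_p(\mu_i,\mu_j)^{2}$ up to $\varepsilon$, concatenate them on $[0,\lambda]\cup[\lambda,1]$ via the usual time-rescaling (which leaves both $L^{1}$-type integrals unchanged on each sub-interval), and then apply Minkowski's inequality in $\mathbb R^{2}$ with norm $(\alpha,\beta)\mapsto\sqrt{a^{2}\alpha^{2}+b^{2}\beta^{2}}$ to the pairs $\bigl(\int|h^{ij}_t|\,\di t,\int(A'(\mu^{ij}_t,V^{ij}_t))^{1/2}\,\di t\bigr)$. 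Positive definiteness is the most delicate point: if $D^{a,b}_p(\mu_0,\mu_1)=0$, the vanishing of $\int|h^{n}_t|\,\di t$ forces equality of total masses, and I will convert the vanishing transport action into $\|\mu_0-\mu_1\|_{\varpi,1}=0$ by combining a uniform $L^{1}$-bound on $\mu^{n}_t$ on the finite chain with a Cauchy--Schwarz estimate relating $\int|V^{n}_t|\,\di t$ to $\int(A'(\mu^{n}_t,V^{n}_t))^{1/2}\,\di t$.
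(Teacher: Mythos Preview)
Your proposal is correct and follows essentially the same route as the paper. The only notable variation is in the reverse direction of \eqref{eq_D_rewrite_F}: you lift by the running maximum $H_0=\sup_t\int_0^t h_s\,\di s$, whereas the paper takes the slightly larger shift $H_0=\int_0^1 h_t^+\,\di t$ (which makes $|H_0|+|H_1|=\int_0^1|h_t|\,\di t$ an equality rather than an inequality); both choices yield $\tilde\mu_t\ge\mu_t$ and conclude via the same $\theta$-monotonicity argument, and the paper likewise defers the metric axioms to the analogous verification for $\al W_p^{a,b}$.
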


\begin{proof}
The rewrite \eqref{eq_D_rewrite_E1} is of the same technical nature as Lemma \ref{lem_frakELinSq_potential_rewrite}.

To prove the equivalence between \eqref{eq_D_rewrite_E1} and \eqref{eq_D_rewrite_F}, 
    we first take an arbitrary quadruplet 
    \begin{equation*}
    (H_0,H_1,(\nu_t)_{t\in[0,1]},(U_t)_{t\in[0,1]})\in {\rm ST}_p (\mu_0,\mu_1;[0,1]).
    \end{equation*}
We define $(\mu_t,V_t,h_t)_{t\in[0,1]}$ as
    \begin{equation*}
    (\mu_t,V_t,h_t):=
    \left\{
    \begin{aligned}
    & (\mu_0+3t H_0 p,&&0,&& \ \ 3H_0), & & \text{if }t\in[0,1/3),\vspace{6pt}  \\
   & (\nu_{3t-1},&&U_{3t-1},&&\quad\ 0), & & \text{if }t\in[1/3,2/3), \vspace{6pt}\\
    & (\mu_1+3(1-t) H_1 p,&&0,&&-3H_1), & & \text{if }t\in[2/3,1].
    \end{aligned}
    \right.
    \end{equation*}
Then we have $(\mu_t,V_t,h_t)_{t\in[0,1]} \in {\al C \al E}_p (\mu_0,\mu_1;[0,1]) $
    and 
    \begin{equation*}
    \mathcal E^{a,b}_{\rm LinSq}(H_0,H_1,(\nu_t)_{t\in[0,1]},(U_t)_{t\in[0,1]}) = F^{a,b}((\mu_t,V_t,h_t)_{t\in[0,1]}).
    \end{equation*}
Thus
    \begin{equation*}
    D^{a,b}_p(\mu_0,\mu_1) \ge \inf
    \left\{
    \big(F^{a,b}((\mu_t,V_t,h_t)_{t\in[0,1]})\big)^{\frac{1}{2}} ~\big|~
    (\mu_t,V_t,h_t)_{t\in[0,1]}\in {\al C \al E}_p (\mu_0,\mu_1;[0,1])
    \right\}.
    \end{equation*}

For the other direction of the inequality of \eqref{eq_D_rewrite_F},
    we take $(\mu_t,V_t,h_t)_{t\in[0,1]} \in {\al C \al E}_p (\mu_0,\mu_1;[0,1]) $,
    and we define $H_0,H_1,U$ as
    \begin{equation*}
    H_0:= \int_0^1 h^+_t \di t, \quad H_1:=\int_0^1 h^-_t \di t,\quad U:=V,
    \end{equation*}
    and let $\nu_t:[0,1]\rightarrow \al M$ be the solution to this initial value problem
    \begin{equation*}
    \left\{
    \begin{aligned}
    &\dot \nu_t + \nabla \cdot U_t =0, \\
    &\nu_0= \mu_0+H_0p. \\
    \end{aligned}
    \right.
    \end{equation*}
Then we have
    \begin{equation*}
    \begin{aligned}
    \nu_1 
    =& \mu_0+H_0p -\int_0^1 \nabla \cdot U_t \di t 
    = \mu_0 + \int_0^1 (h^+_tp - \nabla \cdot V_t) \di t \\
    =&  \mu_0 + \int_0^1 (h_tp - \nabla \cdot V_t) \di t + \int_0^1  h_t^-p \di t \\
    =&\mu_1 + \int_0^1  h_t^-p \di t 
    = \mu_1 + H_1 p.
    \end{aligned}
    \end{equation*}
Now let us compare $ \al E^{a,b}_{\rm LinSq}(H_0,H_1,(\nu_t)_{t\in[0,1]},(U_t)_{t\in[0,1]}) $ and $ F^{a,b}((\mu_t,V_t,h_t)_{t\in[0,1]}) $.
For the source part, we have
    \begin{equation*}
    |H_0|+|H_1|= \int_0^1 h^+_t+h^-_t \di t=\int_0^1 |h_t| \di t.
    \end{equation*}
As for the transportation part, we present the explicit formula for $\mu_t$ and $\nu_t$ at position $x$
    \begin{equation}
    \begin{aligned}
    \mu_t(x) =& \mu_0(x) +p(x)\int_0^th_s\di s - \int_0^t\nabla \cdot V_s(x) \di s, \\
    \nu_t(x)=&\mu_0(x)+H_0p(x)-\int_0^t\nabla \cdot U_s(x) \di s \\
    =& \mu_0(x) +p(x)\int_0^1h_s^+\di s - \int_0^t\nabla \cdot V_s(x) \di s,
    \end{aligned}
    \end{equation}
    which leads to $\mu_t(x) \le \nu_t(x)$.
Then the monotonicity of $\theta$ in $A'$ \eqref{def_A'} gives
    \begin{equation*}
    A'(\mu_t,V_t) \ge A'(\nu_t,U_t).
    \end{equation*}
Furthermore,
    \begin{equation*}
    E^{a,b}_{\rm LinSq}(H_0,H_1,(\nu_t)_{t\in[0,1]},(U_t)_{t\in[0,1]}) \le  F^{a,b}((\mu_t,V_t,h_t)_{t\in[0,1]}).
    \end{equation*}
And finally,
    \begin{equation*}
    D^{a,b}_p(\mu_0,\mu_1) \le \inf
    \left\{
    \big(F^{a,b}((\mu_t,V_t,h_t)_{t\in[0,1]})\big)^{\frac{1}{2}} ~\big|~
    (\mu_t,V_t,h_t)_{t\in[0,1]}\in {\al C \al E}_p (\mu_0,\mu_1;[0,1])
    \right\}.
    \end{equation*}
We have validated the reformulation \eqref{eq_D_rewrite_F}.

Thanks to the similarity between $F^{a,b}$ and $E^{a,b}_{\rm Quad}$,
    the arguments that are used to establish $\al W_p^{a,b}$ as a metric in our previous work \cite[Theorem 3.10]{mao2026wasserstein} can be applied to characterization \eqref{eq_D_rewrite_F} with obvious adaptations.
Thus the proof is omitted here.
\end{proof}

\begin{theorem}
\label{thm_comparision_RP}
$D^{a,b}_p(\mu_0,\mu_1) \le \al W_p^{a,b}(\mu_0,\mu_1)$ and the inequality holds strictly if $\mu_0-\mu_1 \notin \mathrm{span}\{p\}$.
\end{theorem}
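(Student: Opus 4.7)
The plan is to compare the two action functionals on the common admissible class $\al{CE}_p$, using the flux reformulation \eqref{eq_D_rewrite_F} of $D^{a,b}_p$ provided by Lemma \ref{lem_D_rewrite}. For the inequality $D^{a,b}_p\le\al W^{a,b}_p$, I apply Cauchy--Schwarz in $L^2([0,1])$ to each of the source integrand $|h_t|$ and the transport integrand $\sqrt{A'(\mu_t,V_t)}$: this gives $(\int_0^1|h_t|\di t)^2\le\int_0^1 h_t^2\di t$ and $(\int_0^1\sqrt{A'(\mu_t,V_t)}\di t)^2\le\int_0^1 A'(\mu_t,V_t)\di t$, hence $F^{a,b}\le\mathfrak{E}^{a,b}_{\mathrm{Quad}}$ on every admissible triple. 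Taking the infimum over $\al{CE}_p(\mu_0,\mu_1;[0,1])$ and invoking Lemmas \ref{lem_W_rewrite} and \ref{lem_D_rewrite} then yields $D^{a,b}_p(\mu_0,\mu_1)^2\le\al W^{a,b}_p(\mu_0,\mu_1)^2$.

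For the strictness claim I argue by contradiction: assume $\mu_0\ne\mu_1$, $\mu_1-\mu_0\notin\mathrm{span}\{p\}$, and $D^{a,b}_p=\al W^{a,b}_p$. Pick an $\al W$-minimizer $(\mu_t,V_t,h_t)$ from Theorem \ref{thm_minimizer}. The squeeze $\al W^2=\mathfrak{E}^{a,b}_{\mathrm{Quad}}\ge F^{a,b}\ge D^2=\al W^2$ forces both Cauchy--Schwarz inequalities above to become equalities on this triple, so $|h_t|$ and $A'(\mu_t,V_t)$ are a.e.\ constant, say $c$ and $C^2$. I then invoke Lemma \ref{lem_frontload} to replace $h_t$ by its decreasing rearrangement $h^*_t$, producing a still-optimal triple $(\nu_t,V_t,h^*_t)$ to which the same squeeze applies. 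Thus $h^*_t$ is non-increasing with $|h^*_t|\equiv c$, so it takes values in $\{c,-c\}$ with at most one sign change at some $t_0\in[0,1]$; Lemma \ref{lem:no_zero_platform} rules out $c=0$ since $\mu_0\ne\mu_1$.

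Next I promote $(\nu_t,V_t,h^*_t)$ into a shift--transport quadruplet exactly as in the proof of Lemma \ref{lem_D_rewrite}: set $H_0=\int_0^1(h^*_t)^+\di t$, $H_1=\int_0^1(h^*_t)^-\di t$, $U_t=V_t$, and let $\tilde\nu_t$ solve $\dot{\tilde\nu}_t+\nabla\!\cdot V_t=0$ with $\tilde\nu_0=\nu_0+H_0\,p$. A direct integration gives $\tilde\nu_t-\nu_t=q(t)\,p$ with $q(t):=\int_t^1(h^*_s)^+\di s+\int_0^t(h^*_s)^-\di s\ge 0$, so $\tilde\nu_t\ge\nu_t$ and $A'(\tilde\nu_t,V_t)\le A'(\nu_t,V_t)$ by the monotonicity of $\theta$. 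Since $H_0+H_1=c=\int_0^1|h^*_t|\di t$, the source parts of $\al E^{a,b}_{\mathrm{LinSq}}$ and $F^{a,b}$ coincide, and the further squeeze $D^2\le\al E^{a,b}_{\mathrm{LinSq}}(H_0,H_1,\tilde\nu,V)\le F^{a,b}(\nu,V,h^*)=\al W^2=D^2$ forces $A'(\tilde\nu_t,V_t)=A'(\nu_t,V_t)$ for a.e.\ $t$.

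Reading this identity edge-by-edge, finiteness of $A'(\nu_t,V_t)$ ensures $\nu_t(x),\nu_t(y)>0$ whenever $V_t(x,y)\ne 0$ and $K(x,y)>0$; the strict monotonicity of $\theta$ on $(0,\infty)^2$ together with $p\in\al M_+$ then forces $q(t)=0$ at every such $t$. However, in each of the three possible profiles of $h^*_t$ (constant $c$, constant $-c$, or a single transition at $t_0\in(0,1)$), the function $q(t)$ vanishes only on a null set, so $V_t\equiv 0$ for a.e.\ $t$, and the continuity equation collapses to $\dot\nu_t=h^*_t\,p$. Integrating from $0$ to $1$ gives $\mu_1-\mu_0=\nu_1-\nu_0\in\mathrm{span}\{p\}$, contradicting the assumption. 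The main obstacle I anticipate is this last rigidity step: translating the two Cauchy--Schwarz saturations into the geometric statement that $q(t)$ must vanish on the time-support of $V_t$, which is where the strict positivity of $p$ and the strict monotonicity of the logarithmic mean on $(0,\infty)^2$ come together decisively.
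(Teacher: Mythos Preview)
Your proof is correct and follows essentially the same route as the paper: Jensen/Cauchy--Schwarz for the inequality, then the equality case forces $|h_t|$ and $A'$ constant, Lemma~\ref{lem_frontload} and Lemma~\ref{lem:no_zero_platform} pin down the sign profile of $h^*_t$, and comparing with the shift--transport competitor forces $V_t\equiv 0$ on edges, whence $\mu_1-\mu_0\in\mathrm{span}\{p\}$. One small difference worth noting: the paper invokes Theorem~\ref{thm_a.e._not_in_boundary} to assert $\mu^*_t\in\al M_+$ a.e.\ and hence that every bracket $\theta(\mu^*_t)^{-1}-\theta(\nu^*_t)^{-1}$ is strictly positive, whereas you more economically use only that finiteness of $A'(\nu_t,V_t)$ forces $\nu_t(x),\nu_t(y)>0$ on the edges where $V_t(x,y)\ne0$; this makes your strictness argument independent of the full non-locality theorem.
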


\begin{proof}

For readability, we collect the rewrite formulas for $D^{a,b}_p(\mu_0,\mu_1)$ and $\al W^{a,b}_p(\mu_0,\mu_1)$ given in Lemma \ref{lem_D_rewrite} and Lemma \ref{lem_W_rewrite}.
    \begin{align}
    \al W^{a,b}_p(\mu_0,\mu_1)
    &=
    \inf \left\{
    \big(\mathfrak{E}^{a,b}_{\rm Quad}((\mu_t,V_t,h_t)_{t\in[0,1]})\big)^{\frac{1}{2}}
    ~\big|~
    (\mu_t,V_t,h_t)_{t\in[0,1]}\in {\al C \al E}_p (\mu_0,\mu_1;[0,1])
    \right\},
    \label{eq_W_rewrite_B2_+}
    \\
    D^{a,b}_p(\mu_0,\mu_1)
    &=
    \inf \left\{
    \big(F^{a,b}((\mu_t,V_t,h_t)_{t\in[0,1]})\big)^{\frac{1}{2}}
    ~\big|~
    (\mu_t,V_t,h_t)_{t\in[0,1]}\in {\al C \al E}_p (\mu_0,\mu_1;[0,1])
    \right\}.
    \label{eq_D_rewrite_F_+}
    \end{align}
Here
    \begin{equation*}
    \begin{aligned}
    \mathfrak E^{a,b}_{\rm Quad}((\mu_t,V_t,h_t)_{t\in[0,1]})
    &:=
    a^2 \int_0^1 |h_t|^2 \, \di t
    +
    b^2 \int_0^1 A'(\mu_t,V_t)\, \di t ,
    \\
    F^{a,b}((\mu_t,V_t,h_t)_{t\in[0,1]})
    &:=
    a^2 \left( \int_0^1 |h_t| \, \di t \right)^2
    +
    b^2 \left( \int_0^1 \big(A'(\mu_t,V_t)\big)^{\frac{1}{2}} \, \di t \right)^2 .
    \end{aligned}
    \end{equation*}
By Jensen's inequality we obtain
    \begin{equation}
    \mathfrak E^{a,b}_{\rm Quad}((\mu_t,V_t,h_t)_{t\in[0,1]})
    \ge
    F^{a,b}((\mu_t,V_t,h_t)_{t\in[0,1]}) ,
    \label{ineq_B2_F}
    \end{equation}
    which implies $D^{a,b}_p(\mu_0,\mu_1) \le \al W^{a,b}_p(\mu_0,\mu_1)$.

We now prove the implication
\begin{equation}
\label{assump_D=W}
D^{a,b}_p(\mu_0,\mu_1)=\al W_p^{a,b}(\mu_0,\mu_1)
\;\Rightarrow\;
\mu_1-\mu_0\in \mathrm{span}\{p\}.
\end{equation}
Let $(\mu_t^*,V_t^*,h_t^*)_{t\in[0,1]}\in \al{CE}_p(\mu_0,\mu_1;[0,1])$ be a minimizer for \eqref{eq_W_rewrite_B2_+} given by Theorem \ref{thm_minimizer}.
Using \eqref{assump_D=W} together with \eqref{eq_D_rewrite_F_+} and \eqref{ineq_B2_F}, we get the equality of the two energies at the minimizer,
\begin{equation}
\label{eq:E=F}
\mathfrak E^{a,b}_{\rm Quad}\!\big((\mu_t^*,V_t^*,h_t^*)_{t\in[0,1]}\big)
=
F^{a,b}\!\big((\mu_t^*,V_t^*,h_t^*)_{t\in[0,1]}\big).
\end{equation}
By the equality case of Jensen’s inequality applied to $t\mapsto |h_t^*|$ and $t\mapsto A'(\mu_t^*,V_t^*)^{1/2}$, both $|h_t^*|$ and $A'(\mu_t^*,V_t^*)$ are constant for a.e.\ $t$.
{{According to Lemma \ref{lem:no_zero_platform}, there exists $h>0$ such that $|h_t^*|=h$ for a.e. $t$.}}
By Lemma \ref{lem_frontload}, we may assume $h_t^*$ is right–continuous and monotone decreasing, hence there exists some $t_0\in[0,1]$ with
\begin{equation*}
h_t^*=
\begin{cases}
h,& t\in[0,t_0),\\
-h,& t\in[t_0,1],
\end{cases}
\qquad
([0,0)=\emptyset,\ [1,1]=\{1\}).
\end{equation*}

Define $H_0^*:=t_0h$ and $H_1^*:=(1-t_0)h$, and let $\nu_t^*:[0,1]\to \al M$ solve
\begin{equation*}
\begin{aligned}
&\dot\nu_t^*+\nabla\cdot V_t^*=0,\\
&\nu_0^*=\mu_0+H_0^*p .
\end{aligned}
\end{equation*}
Then $\nu_1^*=\mu_1+H_1^*p$, and
\begin{equation}
\label{eq_source_vanish}
|H_0^*|+|H_1^*|=h=\int_0^1 |h_t^*|\,\di t .
\end{equation}
Moreover, for every $x\in\al X$ and a.e.\ $t\in[0,1]$,
\begin{equation*}
\begin{aligned}
\nu_t^*(x)-\mu_t^*(x)
&=\mu_0(x)+H_0^*p(x)-\int_0^t\nabla\cdot V_s^*(x)\,\di s
-\mu_0(x)-p(x)\!\int_0^t h_s^*\,\di s+\int_0^t\nabla\cdot V_s^*(x)\,\di s\\
&=p(x)\!\left(H_0^*-\int_0^t h_s^*\,\di s\right)
=
p(x)\times
\begin{cases}
h\,(t_0-t),& t\in[0,t_0),\\
h\,(t-t_0),& t\in[t_0,1],
\end{cases}
\end{aligned}
\end{equation*}
hence $\nu_t^*(x)>\mu_t^*(x)$ for a.e.\ $t$.

Since $\mu_t^*,\nu_t^*\in\al M_+$ a.e., we compute for a.e.\ $t$,
\begin{equation}
\begin{aligned}
\label{eq_DeltaA'}
A'(\mu_t^*,V_t^*)-A'(\nu_t^*,V_t^*)
&=\frac12\!\sum_{K(x,y)>0}\!
\Big[\alpha\big(V_t^*(x,y),\mu_t^*(x),\mu_t^*(y)\big)
-\alpha\big(V_t^*(x,y),\nu_t^*(x),\nu_t^*(y)\big)\Big]
K(x,y)\varpi(x)\\
&=\frac12\!\sum_{K(x,y)>0}\!
\Big[\theta\big(\mu_t^*(x),\mu_t^*(y)\big)^{-1}
-\theta\big(\nu_t^*(x),\nu_t^*(y)\big)^{-1}\Big]
V_t^*(x,y)^2\,K(x,y)\varpi(x).
\end{aligned}
\end{equation}
By the monotonicity of $\theta$ and $\nu_t^*>\mu_t^*$, every bracket is strictly positive, whence the whole sum is nonnegative and vanishes iff all the $V_t^*(x,y)$ with $K(x,y)>0$ vanish.
Since \eqref{eq:E=F} holds and the source parts coincide \eqref{eq_source_vanish}, 
    equality must also hold in the transportation parts, thus \eqref{eq_DeltaA'} vanishes.
By reversibility, $K(y,x)=K(x,y)\varpi(x)/\varpi(y)>0$ if $K(x,y)>0$, 
    so both orientations $(x,y)$ and $(y,x)$ occur among the summands.
Therefore, for a.e.\ $t$ and all oriented edges with $K(x,y)>0$,
    \begin{equation*}
    V_t^*(x,y)=V_t^*(y,x)=0 .
    \end{equation*}
Therefore, for every $x\in\al X$,
    \begin{equation*}
    \begin{aligned}
    \mu_1(x)-\mu_0(x)
    = & \
    \int_0^1 \dot \mu^*_s(x) \di s
    = 
    \int_0^1 (-\nabla\cdot V_s^*(x) +h_s^*p(x)) \di s
    \\
    = & \
    - \frac{1}{2}\int_0^1\sum_{y\in\al X}[V_s^*(x,y)-V_s^*(y,x) ]K(x,y)\di s +
    \int_0^1 h_s^*p(x) \di s
    \\
    = & \
    \int_0^1 h_s^* \di s \ p(x),
    \end{aligned}
    \end{equation*}
which shows $\mu_1-\mu_0\in \mathrm{span}\{p\}$.

\end{proof}

\begin{remark}
\label{rmk_ME}
Let $\mu_0,\mu_1$ be probability measures with $\mu_0\ne\mu_1$.
The Maas--Erbar metric restricts admissible curves to the probability simplex, i.e.\ to triples $(\mu_t,V_t,h_t)_{t\in[0,1]}$ with $h_t\equiv 0$, and it admits an optimal curve \cite[Theorem 3.2]{erbar2012ricci}.
Such a curve is also admissible for $\al W^{a,1}_p$, hence
\begin{equation*}
\al W^{a,1}_p(\mu_0,\mu_1)\ \le\ \mathrm{ME}(\mu_0,\mu_1).
\end{equation*}
To see that the inequality is strict, fix an ME--minimizer $(\mu^{\rm ME}_t,V^{\rm ME}_t,0)_{t\in[0,1]}$ connecting $(\mu_0,\mu_1)$.
By Lemma \ref{lem:no_zero_platform}, any $\al W^{a,1}_p$--minimizer for $\mu_0\ne\mu_1$ must satisfy $|h_t|>0$ for a.e.\ $t$; in particular, the ME--curve with $h_t\equiv 0$ cannot be optimal for $\al W^{a,1}_p$.
Therefore
\begin{equation*}
\al W^{a,1}_p(\mu_0,\mu_1)\ <\ \mathrm{ME}(\mu_0,\mu_1),
\end{equation*}
with equality only in the trivial case $\mu_0=\mu_1$.
\end{remark}

\subsection{Conditioned uniqueness of geodesics}
\label{sec_uniqueness}
For any measurable $\Psi\in\mathbb{R}^{\al X\times \al X}$,
    let its asymmetric transformation be $\check{\Psi}(x,y):=\frac{1}{2}\Psi(x,y)-\frac{1}{2}\Psi(y,x)$ for $x,y\in \al X$.
The next result states that the asymmetric transformation of $V_t$ in \eqref{new_ce} never increases the action functional. 
Proof in Appendix \ref{sec_app_f}.
\begin{lemma}
\label{lem_asymmetric}
If $(\mu_t,V_t,h_t)_{t\in[0,1]}\in\mathcal{CE}_p(\mu_0,\mu_1;[0,1])$,
    then $(\mu_t,\check V_t,h_t)_{t\in[0,1]}$ is also in $\mathcal{CE}_p(\mu_0,\mu_1;[0,1])$,
    and the action functional satisfies $\mathfrak{E}^{a,b}_{\rm Quad}((\mu_t,\check V_t,h_t)_{t\in[0,1]})\le \mathfrak{E}^{a,b}_{\rm Quad}((\mu_t,V_t,h_t)_{t\in[0,1]})$.
\end{lemma}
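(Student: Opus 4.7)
The plan is to decompose $V_t$ into its symmetric and antisymmetric parts under the swap $(x,y)\mapsto(y,x)$, and to observe that (i) the symmetric part is invisible to the divergence, while (ii) it only adds a nonnegative contribution to $A'$.

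First I verify admissibility. Measurability of $\check V_t$ is immediate since it is a linear combination of measurable maps. For the continuity equation, I use that by direct expansion
$\check V_t(x,y)-\check V_t(y,x)=V_t(x,y)-V_t(y,x)$,
so the formula for the discrete divergence from Section~\ref{sec:2.1} gives $\nabla\!\cdot\!\check V_t=\nabla\!\cdot\!V_t$. Hence
$\dot\mu_t+\nabla\!\cdot\!\check V_t=\dot\mu_t+\nabla\!\cdot\!V_t=h_t p$
for a.e.\ $t$, and the endpoint constraints on $\mu_t$ are untouched. This gives $(\mu_t,\check V_t,h_t)_{t\in[0,1]}\in\mathcal{CE}_p(\mu_0,\mu_1;[0,1])$.

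For the action inequality, the source term $a^{2}\!\int_{0}^{1}h_t^{2}\,\di t$ is unchanged, so it suffices to prove the pointwise bound $A'(\mu_t,\check V_t)\le A'(\mu_t,V_t)$ for a.e.\ $t$. I may assume $A'(\mu_t,V_t)<\infty$; otherwise there is nothing to show. Under this assumption, the convention $v^{2}/0=+\infty$ for $v\neq 0$ in the definition of $\alpha$ forces $V_t(x,y)=0$ whenever $\theta(\mu_t(x),\mu_t(y))K(x,y)=0$, and the same then holds for $\check V_t$, so all terms below are well-defined and finite. Using reversibility \eqref{eq_markov_reversible} and symmetry of $\theta$, the weight $w(x,y):=K(x,y)\varpi(x)/\theta(\mu_t(x),\mu_t(y))$ satisfies $w(x,y)=w(y,x)$, which allows me to group the $(x,y)$ and $(y,x)$ contributions in the sum defining $A'$. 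Thus
\begin{equation*}
A'(\mu_t,V_t)=\tfrac{1}{4}\sum_{\{x,y\}}w(x,y)\bigl(V_t(x,y)^{2}+V_t(y,x)^{2}\bigr),
\end{equation*}
and likewise for $\check V_t$.

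The whole argument then reduces to the elementary identity
\begin{equation*}
\check V_t(x,y)^{2}+\check V_t(y,x)^{2}=\tfrac{1}{2}\bigl(V_t(x,y)-V_t(y,x)\bigr)^{2}\le V_t(x,y)^{2}+V_t(y,x)^{2},
\end{equation*}
which is equivalent to $0\le (V_t(x,y)+V_t(y,x))^{2}$. Summing over unordered pairs with the nonnegative weights $w(x,y)$ yields $A'(\mu_t,\check V_t)\le A'(\mu_t,V_t)$, and integrating in $t$ gives the claimed action inequality.

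There is essentially no substantial obstacle here; the only point requiring some care is the handling of edges on which $\theta$ degenerates, which I dispose of at the outset by the finiteness reduction above.
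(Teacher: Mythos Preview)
Your proof is correct and follows essentially the same approach as the paper: both verify $\nabla\!\cdot\check V_t=\nabla\!\cdot V_t$ from the definition of the divergence, then group the $(x,y)$ and $(y,x)$ contributions in $A'$ using reversibility and symmetry of $\theta$, and reduce to the scalar inequality $\check V_t(x,y)^2+\check V_t(y,x)^2\le V_t(x,y)^2+V_t(y,x)^2$. The paper carries out an explicit three-case analysis (according to whether $\theta$ vanishes and whether $V$ vanishes), whereas you dispose of the degenerate cases in one stroke via the finiteness reduction; this is a clean shortcut but not a genuinely different argument.
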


Next, we show that if a minimizer exists whose trajectory lies entirely in $\mathcal M_+$, hence the endpoints must also belong to $\mathcal M_+$, then that minimizer is unique.
\begin{proposition}
\label{prop_conditioned_uniqueness}
Let $\mu_0,\mu_1\in \al M_+$.
If for $i=1,2$, there exist $(\mu^i_t,V^i_t,h^i_t)_{t\in[0,1]}\in\mathcal{CE}_p(\mu_0,\mu_1;[0,1])$
    such that 
    \begin{enumerate}
    \item they minimize the functional $\mathfrak E^{a,b}_{\rm Quad}:$
        \begin{equation*}
        \mathfrak E^{a,b}_{\rm Quad}((\mu^1_t,V^1_t,h^1_t)_{t\in[0,1]})
        =
        \mathfrak E^{a,b}_{\rm Quad}((\mu^2_t,V^2_t,h^2_t)_{t\in[0,1]})
        =
        \al W_p^{a,b}(\mu_0,\mu_1)^2;
        \end{equation*}
    \item the fluxes are antisymmetric, $V_t^i(x,y)=-V_t^i(y,x)$ for $i=1,2$  and all $x,y\in \al X$ and a.e.\ $t$;
    \item the fluxes vanish when two points are not connected: if $K(x,y)=0$, then $V_t^i(x,y)=0$ for $i=1,2$;
    \item one trajectory, say $\mu^1$, satisfies
        \begin{equation*}
        \mu^1_t(x)>0\qquad\text{for every }x\in \al X\text{ and all }t\in[0,1].
        \end{equation*}
    \end{enumerate}
Then the two minimizers coincide:
    \begin{gather*}
    \mu^1_t=\mu^2_t, \quad \forall ~t\in[0,1],\\ 
    V^1_t=V^2_t,\quad h^1_t=h^2_t \quad \text{a.e. } t\in[0,1] 
    \end{gather*}
\end{proposition}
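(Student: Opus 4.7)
The overall plan is a strict-convexity argument for the action, closed by an ODE for a vertex-wise ratio. I would organize the proof in three blocks.

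\emph{Block 1 (convex combination and equality of sources).} The constraint set $\mathcal{CE}_p(\mu_0,\mu_1;[0,1])$ is affine in $(\mu,V,h)$, so for each $\lambda\in[0,1]$ the interpolant
$(\mu^\lambda_t,V^\lambda_t,h^\lambda_t):=(1-\lambda)(\mu^1_t,V^1_t,h^1_t)+\lambda(\mu^2_t,V^2_t,h^2_t)$
lies in $\mathcal{CE}_p(\mu_0,\mu_1;[0,1])$. Convexity of $\mathfrak{E}^{a,b}_{\rm Quad}$ in $(\mu,V,h)$, which combines the quadratic dependence on $h$ with the joint convexity of $\alpha$ from Lemma \ref{lem_alpha_convex}, together with the equality $\mathfrak{E}^{a,b}_{\rm Quad}((\mu^1,V^1,h^1))=\mathfrak{E}^{a,b}_{\rm Quad}((\mu^2,V^2,h^2))=\al W_p^{a,b}(\mu_0,\mu_1)^2$, forces every $\lambda$-interpolant to also be a minimizer. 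Consequently, equality is forced in every pointwise convex inequality defining the action for a.e. $t$. Strict convexity of $h\mapsto h^2$ yields $h^1_t=h^2_t$ for a.e. $t$, while $\alpha$ must be affine on the segment joining $Z^i_t(x,y):=(V^i_t(x,y),\mu^i_t(x),\mu^i_t(y))$ for a.e. $t$ and each pair $(x,y)$ with $K(x,y)>0$.

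\emph{Block 2 (trichotomy and flux scaling).} By Theorem \ref{thm_a.e._not_in_boundary} applied to both minimizers $\mu^2_t\in\al M_+$ for a.e. $t$, while hypothesis (4) yields $\mu^1_t\in\al M_+$ for every $t$. Lemma \ref{lem_alpha_convex} applied to $(Z^1_t(x,y),Z^2_t(x,y))\in(\mathbb R\times(0,\infty)^2)^2$ then gives one of: (a) $Z^1=Z^2$; (b) $V^1_t(x,y)=V^2_t(x,y)=0$; or (c) $Z^2=\kappa Z^1$ for some $\kappa>0$, $\kappa\neq 1$. Thanks to hypothesis (4), the vertex-wise ratio $r_t(x):=\mu^2_t(x)/\mu^1_t(x)$ is defined for every $(t,x)$ and absolutely continuous in $t$, with $r_0(x)=r_1(x)=1$. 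Checking each case, one obtains $V^2_t(x,y)=r_t(x)V^1_t(x,y)$ at every edge; moreover, whenever $V^1_t(x,y)\neq 0$, one has $r_t(x)=r_t(y)$, so the scaling coefficient extracted from Lemma \ref{lem_alpha_convex} is precisely this global ratio.

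\emph{Block 3 (ODE closure).} Hypotheses (2) and (3) reduce the divergence to $(\nabla\cdot V^i_t)(z)=\sum_{y\in\al X}V^i_t(z,y)K(z,y)$, and the edgewise flux scaling from Block 2 propagates to $(\nabla\cdot V^2_t)(z)=r_t(z)(\nabla\cdot V^1_t)(z)$. Differentiating $\mu^2_t(z)=r_t(z)\mu^1_t(z)$ and substituting into the continuity equation for $\mu^2$, then eliminating $(\nabla\cdot V^1_t)(z)$ via the continuity equation for $\mu^1$ and the identity $h^1_t=h^2_t=:h_t$, an elementary algebraic simplification produces
\[
\dot r_t(z)\,\mu^1_t(z)=(1-r_t(z))\,h_t\,p(z),\qquad\text{for a.e. }t\in[0,1].
\]
By hypothesis (4) and continuity of $\mu^1$, one has $\mu^1_t(z)\ge\epsilon>0$ on $[0,1]$; hence this is a linear first-order ODE in $r_t(z)$ with integrable coefficient, and the initial condition $r_0(z)=1$ forces $r_t(z)\equiv 1$. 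Therefore $\mu^1_t=\mu^2_t$ for every $t\in[0,1]$, and then the flux scaling $V^2=rV^1$ collapses to $V^1=V^2$ a.e.

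The main obstacle I expect is Block 3. The trichotomy in Block 2 a priori allows a genuine scaling $\kappa\ne 1$ at each edge and each time, so neither $h^1=h^2$ nor the edgewise scaling identity pins down $\mu^1=\mu^2$ directly. The upgrade requires reading the local scaling as a well-defined vertex ratio, transporting it through the divergence operator, and closing the system via the continuity equation; hypothesis (4) is exactly what makes this division legal (and absolutely continuous) on the entire interval $[0,1]$ rather than merely almost everywhere, so that the linear ODE genuinely admits a unique solution originating from $r_0\equiv 1$.
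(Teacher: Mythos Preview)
The proposal is correct and follows essentially the same approach as the paper's proof: the convex-combination argument forcing $h^1=h^2$ and edgewise affineness of $\alpha$, the application of Lemma~\ref{lem_alpha_convex} to obtain the vertex-wise flux scaling $V^2_t(x,y)=r_t(x)V^1_t(x,y)$ with $r_t(x)=\mu^2_t(x)/\mu^1_t(x)$, and the closing linear ODE $\dot r_t(z)\mu^1_t(z)=(1-r_t(z))h_tp(z)$ with $r_0\equiv 1$ are exactly the paper's Steps~1--5. Your explicit invocation of Theorem~\ref{thm_a.e._not_in_boundary} to place $\mu^2_t$ in $\al M_+$ for a.e.\ $t$ (so that Lemma~\ref{lem_alpha_convex} applies on $\mathbb{R}\times(0,\infty)^2$) is a welcome clarification that the paper leaves implicit.
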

\begin{proof}
We split the proof into five steps.
\vspace{0.2cm}

\noindent$\bullet$ \textbf{Step 1. Source trajectories coincide.}
Set, for $\tau\in[0,1]$,
    \begin{equation*}
    \begin{gathered}
    (\mu^\tau_t,V^\tau_t,h^\tau_t):=\tau(\mu^1_t,V^1_t,h^1_t)+(1-\tau)(\mu^2_t,V^2_t,h^2_t), \\
    \Upsilon(\tau):=\int_0^1 |h^\tau_t|^2\,\di t,\qquad
    \Xi(\tau):=\int_0^1 A'(\mu^\tau_t,V^\tau_t)\,\di t.
    \end{gathered}
    \end{equation*}
By linearity of the constraints, 
    $(\mu^\tau_t,V^\tau_t,h^\tau_t)_{t\in[0,1]}\in\al{CE}_p(\mu_0,\mu_1;[0,1])$ for each $\tau$.
Since $h\mapsto|h|^2$ is convex and $(\mu,V)\mapsto A'(\mu,V)$ is convex, both $\Upsilon$ and $\Xi$ are convex, and
    \begin{equation*}
    \mathfrak E^{a,b}_{\rm Quad}((\mu^\tau_t,V^\tau_t,h^\tau_t)_{t\in[0,1]})=a^2\Upsilon(\tau)+b^2\Xi(\tau)
    \end{equation*}
    is convex.
    
Because $(\mu^1_t,V^1_t,h^1_t)_{t\in[0,1]}$ and $(\mu^2_t,V^2_t,h^2_t)_{t\in[0,1]}$ are minimizers, 
    the convex function
    \begin{equation*}
    \tau\mapsto \mathfrak E^{a,b}_{\rm Quad}((\mu^\tau_t,V^\tau_t,h^\tau_t)_{t\in[0,1]})
    \end{equation*}
    takes the same minimal value at $\tau=0$ and $\tau=1$. 
Hence it is constant on $[0,1]$. 
With $a,b>0$, this forces {both} $\Upsilon$ and $\Xi$ to be affine on $[0,1]$.
Otherwise $a^2\Upsilon+b^2\Xi$ would be strictly convex at some chord.

But $\Upsilon$ is in fact {strictly} convex unless $h^1_t=h^2_t$ a.e., since
    \begin{equation*}
    \Upsilon(\tau)
    =\int_0^1\big|\tau h^1_t+(1-\tau)h^2_t\big|^2\,\di t
    =\tau\Upsilon(1)+(1-\tau)\Upsilon(0)+\tau(\tau-1)\int_0^1 |h^1_t-h^2_t|^2\,\di t.
    \end{equation*}
Affineness of $\Upsilon$ therefore yields $\int_0^1 |h^1_t-h^2_t|^2\,\di t=0$, i.e.,
    \begin{equation*}
    h^1_t=h^2_t=:h_t\qquad\text{a.e. $t$\ on }(0,1),
    \end{equation*}
    and consequently $\Upsilon(\tau)\equiv \Upsilon(0)=\Upsilon(1)$ and $\Xi(\tau)\equiv \Xi(0)=\Xi(1)$ are \emph{constant} on $[0,1]$.
\vspace{0.2cm}

\noindent$\bullet$ \textbf{Step 2. Edgewise Jensen equality at each time.}
Fix $\tau\in[0,1]$. 
For a.e.\ $t\in(0,1)$, by convexity of $\alpha$,
    \begin{equation*}
    \begin{aligned}
    A'(\mu^\tau_t,V^\tau_t)
    =\frac12\sum_{x,y}\alpha(Z^\tau_{xy}(t))\,K(x,y)\,\varpi(x)
    \le & \
    \frac12\sum_{x,y}\Big[\tau\,\alpha(Z^1_{xy}(t))+(1-\tau)\,\alpha(Z^2_{xy}(t))\Big]K(x,y)\,\varpi(x)
    \\
    =
    \tau A'(\mu^1_t,V^1_t)+(1-\tau)A'(\mu^2_t,V^2_t),
    \end{aligned}
    \end{equation*}
    where 
    \begin{equation*}
    Z^i_{xy}(t):=(V^i_t(x,y),\mu^i_t(x),\mu^i_t(y)),
    \quad
    Z^\tau_{xy}(t):=\tau Z^1_{xy}(t)+(1-\tau)Z^2_{xy}(t).
    \end{equation*}
Define the nonnegative \emph{gap}
    \begin{equation*}
    G_\tau(t):=\frac12\sum_{x,y}\Big(\tau\,\alpha(Z^1_{xy}(t))+(1-\tau)\,\alpha(Z^2_{xy}(t))-\alpha(Z^\tau_{xy}(t))\Big)K(x,y)\,\varpi(x)\ge 0.
    \end{equation*}
Integrating in $t$ and using the constancy of $\Xi$ from Step~1,
    \begin{equation*}
    \int_0^1 G_\tau(t)\di t
    =\int_0^1\!\Big(\tau A'(\mu^1_t,V^1_t)+(1-\tau)A'(\mu^2_t,V^2_t)-A'(\mu^\tau_t,V^\tau_t)\Big)\di t
    =\tau\Xi(1)+(1-\tau)\Xi(0)-\Xi(\tau)=0.
    \end{equation*}
Hence $G_\tau(t)=0$ for a.e.\ $t$. 
Since $G_\tau(t)$ is a finite sum of nonnegative terms, 
    each term must vanish for those $t$:
    \begin{equation}
    \label{eq:edgewise-affine} 
    \alpha(Z^\tau_{xy}(t))
    =\tau\,\alpha(Z^1_{xy}(t))+(1-\tau)\,\alpha(Z^2_{xy}(t))
    \qquad\text{for every }(x,y)\text{ with }K(x,y)>0,\ \text{a.e.\ }t.
    \end{equation}
\eqref{eq:edgewise-affine} holds for all $\tau\in[0,1]$.
\vspace{0.2cm}

\noindent$\bullet$ \textbf{Step 3. Nodewise ratios.}
Define $\kappa_t(x):=\mu^2_t(x)/\mu^1_t(x)$.
This is well defined since $\mu_t^1(x)>0$ by assumption.
Fix an ordered edge $(x,y)$ with $K(x,y)>0$ and 
    take almost every $t$ for which \eqref{eq:edgewise-affine} holds.
If $Z^1_{xy}(t)=Z^2_{xy}(t)$, 
    then $\mu_t^2(x)=\mu_t^1(x)$ and $V_t^2(x,y)=V_t^1(x,y)$, hence $\kappa_t(x)=1$ and
    \begin{equation*}
    V_t^2(x,y)=\kappa_t(x)\,V_t^1(x,y).
    \end{equation*}
If $Z^1_{xy}(t)\neq Z^2_{xy}(t)$, 
    then by the affine segment characterization in Lemma~\ref{lem_alpha_convex} the relation \eqref{eq:edgewise-affine} forces exactly one of the following:
    \begin{equation}
    \label{kappa_dichtomy}
    \begin{aligned}
    &\text{(i) } V_t^1(x,y)=V_t^2(x,y)=0,\\
    &\text{(ii) } (V_t^2(x,y),\mu_t^2(x),\mu_t^2(y))=\kappa_t(x,y)\,(V_t^1(x,y),\mu_t^1(x),\mu_t^1(y)) \text{ for some }\kappa_t(x,y)\in(0,1)\cup(1,\infty).
    \end{aligned}
    \end{equation}
In case (ii) we compare the second components and obtain $\mu_t^2(x)=\kappa_t(x,y)\,\mu_t^1(x)$, 
    hence $\kappa_t(x,y)=\kappa_t(x)$ and therefore
    \begin{equation*}
    V_t^2(x,y)=\kappa_t(x)\,V_t^1(x,y).
    \end{equation*}
In case (i) we also have $V_t^2(x,y)=\kappa_t(x)\,V_t^1(x,y)$ because both sides vanish.

Finally, if $K(x,y)=0$, then by the standing convention $V_t^1(x,y)=V_t^2(x,y)=0$ and again
\begin{equation*}
V_t^2(x,y)=\kappa_t(x)\,V_t^1(x,y).
\end{equation*}
\vspace{0.2cm}

\noindent$\bullet$ \textbf{Step 4. Regularity of $\kappa$ and the nodewise ODE.}
We have the nodewise continuity equation with antisymmetric flux,
    \begin{equation}
    \label{eq_ce_antisym_flux}
    \dot\mu^i_t(x)+\sum_{y}V^i_t(x,y)\,K(x,y)=h_t\,p(x),\qquad i=1,2.
    \end{equation}
Since $\mu^1_t(x)$ is continuous and strictly positive on the compact interval $[0,1]$, 
    it admits a positive lower bound. 
Hence $\kappa_t(x)={\mu^2_t(x)}/{\mu^1_t(x)}$ is absolutely continuous and
    for a.e. $t\in[0,1]$,
    \begin{equation*}
    \dot \kappa_t(x)=\frac{\dot\mu^2_t(x)\,\mu^1_t(x)-\mu^2_t(x)\,\dot\mu^1_t(x)}{\mu^1_t(x)^2}
    = (\dot \mu^2_t(x)-\kappa_t(x)\dot \mu^1_t(x))(\mu^1_t(x))^{-1}.
    \end{equation*}
Using the continuity equation \eqref{eq_ce_antisym_flux}
    and $V^2_t(x,y)=\kappa_t(x)V _t^1(x,y)$,
    we obtain, for a.e.\ $t\in[0,1]$,
    \begin{equation}\label{eq:kappa-ode}
    \begin{aligned}
    \dot \kappa_t(x) = & \ (\dot \mu^2_t(x)-\kappa_t(x)\dot \mu^1_t(x)) (\mu^1_t(x))^{-1}
    \\
    = &\left(\ h_t p(x)- \sum_{y}V^2_t(x,y)\,K(x,y)
    - \kappa_t(x)\Big(h_t p(x)- \sum_{y}V^1_t(x,y)\,K(x,y)\Big)\right) (\mu^1_t(x))^{-1}
    \\
    =& \ (1-\kappa_t(x))\frac{h_t p(x)}{\mu^1_t(x)}.
    \end{aligned}
    \end{equation}
\vspace{0.2cm}

\noindent$\bullet$ \textbf{Step 5. Global solution on $[0,1]$ and $\kappa\equiv 1$.}
Since $\mu^1_t(x)$ has a positive lower bound and $h\in \mathrm L^2([0,1];\bb R)\subset \mathrm L^1([0,1];\bb R)$, the coefficient
    \begin{equation*}
    \frac{h_t\,p(x)}{\mu^1_t(x)}\in \mathrm L^1([0,1];\bb R),
    \end{equation*}
    and \eqref{eq:kappa-ode} is a linear Carathéodory ODE on $[0,1]$ with unique absolutely continuous solution given by the integrating-factor formula
    \begin{equation*}
    \kappa_t(x)=1+\big(\kappa_0(x)-1\big)\exp\!\Big(-\!\int_0^{t}\frac{h_\tau\,p(x)}{\mu^1_\tau(x)}\,\di\tau\Big).
    \end{equation*}
The endpoints of the two minimizers coincide, so $\kappa_0(x)=\mu^2_0(x)/\mu^1_0(x)=1$, and consequently
    \begin{equation*}
    \kappa_t(x)\equiv 1\qquad\text{for all }t\in[0,1].
    \end{equation*}
Therefore $\mu^2_t(x)=\mu^1_t(x)$ for every $x$ and all $t$.
Moreover, $V^2_t$ coincides with $V^1_t$ for a.e. $t$.
\end{proof}

\subsection{Geodesic equation}
\label{sec_geoeq}

Having established $(\al M,\al W_p^{a,b})$ as a geodesic space, 
    we are naturally curious about the geodesic equation.
For that, an all-time strictly positive condition is necessary.

\begin{theorem}
\label{thm:GE-weak}
Let $\mu_t:[0,1]\to \al M_+$ be an absolutely continuous curve.
Let $\nabla\psi_t$ and $h_t$ satisfy the continuity equation for a.e. $t\in[0,1],$
    \begin{equation*}
    \dot \mu_t + \nabla_{\mu_t} \cdot \nabla \psi_t = h_t p.
    \end{equation*}
Then the following two assertions are equivalent:

\medskip
\noindent \emph{(i)} 
$\mu_t$ is a constant-speed geodesic for $\al W^{a,b}_p$.

\medskip
\noindent \emph{(ii)} $(\nabla\psi_t,h_t)$ satisfies the weak geodesic system on $(0,1)$.
For all scalar tests $\zeta\in\mathrm C_{\mathrm c}^1((0,1);\mathbb{R})$, one has
    \begin{equation}
    \tag{GE–$h$}\label{eq:GE-h-weak}
    \int_0^1 \, h_t\,\dot\zeta(t)\di t
    = \,\frac{b^2}{2 a^2}\int_0^1
    \Bigg[
    \sum_{x,y\in \al X} \nabla\psi_t(x,y)^2  \partial_1\theta(\mu_t(x),\mu_t(y))K(x,y) \varpi(x)  p(x)
    \Bigg]\,
    \zeta(t)\di t.
    \end{equation}
Meanwhile, for all scalar tests $\zeta\in \mathrm C_{\mathrm c}^1((0,1);\mathbb{R})$ and $x,y\in \al X$ with $K(x,y)\neq 0$, one has
    \begin{align}
    \tag{GE–$\psi$}\label{eq:GE-psi-weak}
    \int_0^1 \nabla \psi_t(x,y) \dot\zeta(t)\di t
    =
    \frac{1}{2}
    \int_0^1
    & \left(
     \sum_{z}
    \nabla\psi_t(z,y)^2
    K(y,z)
    \partial_1 \theta(\mu_t(y),\mu_t(z))
    \right.
    \\
    & \left. -
    \sum_{z}
    \nabla\psi(z,x)^2
    K(x,z)
    \partial_1 \theta(\mu_t(x),\mu_t(z))\right)
    \zeta(t)
    \di t. \notag
    \end{align}
\end{theorem}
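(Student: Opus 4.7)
The plan is a standard first-variation argument adapted to the graph Benamou--Brenier setting. The hypothesis $\mu_t\in\al M_+$ for every $t\in[0,1]$ combined with continuity on the compact interval gives $\inf_{t,x}\mu_t(x)>0$, so Lemma~\ref{lem_ce_solver_combined} uniformly lifts any smooth perturbation of $\mu_t$ to a perturbation of $(\nabla\psi_t,h_t)$ preserving the continuity equation.

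Forward direction (i)$\Rightarrow$(ii). I fix a smooth $\sigma:[0,1]\to\bb R^{\al X}$ with $\sigma_0=\sigma_1=0$ and set $\mu_t^{\varepsilon}:=\mu_t+\varepsilon\sigma_t$. For small $|\varepsilon|$ this stays in $\al M_+$, and Lemma~\ref{lem_ce_solver_combined} yields unique $(\delta\nabla\psi_t,\delta h_t)$ satisfying the linearised CE
\begin{equation*}
\dot\sigma_t+\nabla_{\mu_t}\!\cdot\delta\nabla\psi_t+\nabla\!\cdot(\delta\hat\mu_t*\nabla\psi_t)=\delta h_t\,p,
\end{equation*}
where $\delta\hat\mu_t(x,y)=\partial_{1}\theta(\mu_t(x),\mu_t(y))\sigma_t(x)+\partial_{2}\theta(\mu_t(x),\mu_t(y))\sigma_t(y)$. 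Writing $\tfrac{\di}{\di\varepsilon}\big|_{\varepsilon=0}E^{a,b}_{\rm Quad}=0$ and using \eqref{eq_int_by_parts} together with the linearised CE to eliminate $\delta\nabla\psi_t$ condenses the first variation to
\begin{equation*}
2\!\int_{0}^{1}\!\delta h_t\bigl[a^{2}h_t-b^{2}\langle\psi_t,p\rangle_{\varpi}\bigr]\di t+2b^{2}\!\int_{0}^{1}\!\langle\psi_t,\dot\sigma_t\rangle_{\varpi}\di t-b^{2}\!\int_{0}^{1}\!\|\nabla\psi_t\|_{\delta\mu_t}^{2}\di t=0.
\end{equation*}
Specialising $\sigma_t=\zeta(t)\,p$ with $\zeta\in \mathrm C^{1}_{\mathrm c}((0,1);\bb R)$ gives $\delta h_t=\dot\zeta$ and $\delta(\nabla_{\mu_t}\!\cdot\nabla\psi_t)=0$; merging the two $\delta\hat\mu_t$ summands via the symmetry $\theta(u,v)=\theta(v,u)$, the reversibility \eqref{eq_markov_reversible}, and the skew-symmetry $\nabla\psi_t(x,y)=-\nabla\psi_t(y,x)$ collapses the transport variation into a single $\partial_{1}\theta$ expression weighted by $p$, which is exactly \eqref{eq:GE-h-weak}. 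For \eqref{eq:GE-psi-weak}, I take $\sigma_t$ equal to $\zeta(t)$ at a single vertex $x_{0}$ and zero elsewhere; the component-wise identity obtained this way, written at two distinct vertices $x$ and $y$ and subtracted, cancels the terms multiplying $[a^{2}h_t-b^{2}\langle\psi_t,p\rangle_{\varpi}]$ and $\langle\psi_t,p\rangle_{\varpi}$ (both depend on $\sigma_t$ only through the scalar $\langle\sigma_t,1\rangle_{\varpi}$), and the surviving piece becomes \eqref{eq:GE-psi-weak} via $\psi_t(y)-\psi_t(x)=\nabla\psi_t(x,y)$.

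Reverse direction (ii)$\Rightarrow$(i). The identity chain above is reversible: assuming \eqref{eq:GE-h-weak}, \eqref{eq:GE-psi-weak}, and the continuity equation, a direct calculation using the decomposition $\sigma_t=\sum_{x_0}\sigma_t(x_0)\,e_{x_0}$ shows that the displayed first-variation identity holds on every admissible $\sigma_t$, so $\tfrac{\di}{\di\varepsilon}\big|_{\varepsilon=0}E^{a,b}_{\rm Quad}=0$ for every admissible perturbation. Passing to the flux variable $V_t:=\hat\mu_t*\nabla\psi_t$, the admissibility constraint \eqref{new_ce} is affine in $(\mu_t,V_t,h_t)$ and $\mathfrak E^{a,b}_{\rm Quad}$ is convex by Lemma~\ref{lem_alpha_convex}; vanishing first variation at an interior configuration therefore forces global minimality, and Corollary~\ref{cor_geodesic} identifies $\mu_t$ as a constant-speed geodesic.

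The principal technical point is the bookkeeping of $\|\nabla\psi_t\|_{\delta\mu_t}^{2}$: the two $\partial_{i}\theta$ summands coming from $\delta\hat\mu_t$ have to be merged into a single $\partial_{1}\theta$ form by simultaneously invoking symmetry of $\theta$, reversibility, and skew-symmetry of $\nabla\psi_t$, and this is precisely what produces the asymmetric right-hand side of \eqref{eq:GE-psi-weak}. A secondary concern—differentiation of $E^{a,b}_{\rm Quad}$ under the time integral and the identity $\int\langle\psi_t,\dot\sigma_t\rangle_{\varpi}\di t=-\int\langle\dot\psi_t,\sigma_t\rangle_{\varpi}\di t$—is handled by the uniform positivity of $\mu_t$, which, through Lemma~\ref{lem_ce_solver_combined}, yields uniform bounds on $\hat\mu_t$ and $\hat\mu_t^{-1}$ and places $\psi_t$ in $\mathrm H^{1}((0,1);\bb R^{\al X})$ along the geodesic.
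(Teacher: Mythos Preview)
Your outline is essentially correct, but your route differs from the paper's in an instructive way. The paper works throughout in the \emph{flux} formulation $(\mu_t,V_t,h_t)$ with $V_t=\hat\mu_t\ast\nabla\psi_t$, and for (i)$\Rightarrow$(ii) uses two decoupled perturbation families: a source-only variation $(\mu_t+\varepsilon F_t p,\,V_t,\,h_t+\varepsilon f_t)$ with $V_t$ frozen to derive \eqref{eq:GE-h-weak}, and a transport-only variation $(\mu_t-\delta\Omega_t,\,V_t+\delta W_t,\,h_t)$ with $h_t$ frozen, built from the edge indicator $\xi^{x,y}$ and a divergence preimage $U^{x,y}$, to derive \eqref{eq:GE-psi-weak}. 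You instead perturb $\mu_t$ by a general $\sigma_t$ in the \emph{potential} formulation and let $D_{\mu_t}$ supply $(\delta\nabla\psi_t,\delta h_t)$, arriving at one unified first-variation identity which you then specialise. Both approaches work; the paper's is more transparent because each perturbation targets exactly one of the two geodesic equations, whereas yours requires the subtraction trick for \eqref{eq:GE-psi-weak} and the cancellation of the $\langle\psi_t,p\rangle_\varpi$ terms. Your approach has the pleasant feature that the first-variation formula is manifestly linear in $\sigma$, which makes the reverse direction short.

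Two points deserve tightening. First, the claim ``$\delta(\nabla_{\mu_t}\!\cdot\nabla\psi_t)=0$'' for $\sigma_t=\zeta(t)p$ is not correct: the linearised CE gives $\nabla_{\mu_t}\!\cdot\delta\nabla\psi_t=-\nabla\!\cdot(\delta\hat\mu_t\ast\nabla\psi_t)$, which is generally nonzero. Fortunately your displayed formula does not use this claim (only $\delta h_t=\dot\zeta$, which follows from pairing with $\mathbf 1$), so the argument survives. Second, in the reverse direction you pass to the flux picture to invoke convexity, but your first variation was computed on the potential side; to close the loop you need the observation that in the flux formulation the first variation of $\mathfrak E^{a,b}_{\rm Quad}$ along any feasible direction $(\delta\mu,\delta V,\delta h)$ depends only on $(\delta\mu,\delta h)$ once one integrates $\langle\nabla\psi_t,\delta V_t\rangle_\varpi$ by parts and uses $\nabla\!\cdot\delta V_t=\delta h_t p-\dot{\delta\mu}_t$. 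The paper handles this explicitly by decomposing all feasible directions into source and transport types and checking each separately; you should at least state why vanishing of your $\sigma$-indexed first variation implies vanishing on the full $\mathcal{CE}_p$-tangent space.
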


Since the proof is lengthy yet technically standard, we postpone it to Appendix \ref{sec_app_GE_weak} and only sketch it here.
\begin{proof}[Sketch of proof]
(i) $\Rightarrow$ (ii).
For a.e.\ $t$, set $(\nabla\psi_t,h_t):=D_{\mu_t}(\dot\mu_t)$ and
    $V_t(x,y):=\theta(\mu_t(x),\mu_t(y))\,\nabla\psi_t(x,y)$.
Consider first variations that perturb only $h$ (with a compensating perturbation of $\mu$
    in the $p$-direction) and compute the derivative of $\mathfrak{E}^{a,b}_{\rm Quad}$ at $0$;
    optimality and integration by parts in time yield \eqref{eq:GE-h-weak}.
Then use coupled $(\mu,V)$–perturbations that redistribute mass along a single edge,
    together with discrete integration by parts in space, to obtain \eqref{eq:GE-psi-weak}.

(ii) $\Rightarrow$ (i).
Assume $(\mu_t,\nabla\psi_t,h_t)_{t\in[0,1]}$ satisfies the continuity equation and
    \eqref{eq:GE-h-weak}–\eqref{eq:GE-psi-weak}, 
    and define $V_t(x,y):=\theta(\mu_t(x),\mu_t(y))\,\nabla\psi_t(x,y)$.
Any admissible first–order perturbation decomposes into a source part and a transport part
of the type used above.
The weak equations imply that the first variation of $\mathfrak{E}^{a,b}_{\rm Quad}$
vanishes along both families, hence along all feasible directions.
Since $\mathfrak{E}^{a,b}_{\rm Quad}$ is convex on the affine feasible set,
    this stationarity implies global minimality, and the dynamic characterization
    yields that $\mu_t$ is a constant–speed $\al W^{a,b}_p$–geodesic.
\end{proof}

\begin{remark}
The assumption $\mu_t\in\mathcal M_+$ for all $t$ in Theorem \ref{thm:GE-weak} is imposed to ensure the validity of the argument. 
By Theorem \ref{thm_a.e._not_in_boundary} we know only that any constant-speed geodesic between arbitrary $\mu_0,\mu_1\in\mathcal M$ lies in $\mathcal M_+$ for almost every $t\in(0,1)$.
Our present proof does not exclude momentary contact with the boundary, 
    so there remains a gap between the weak result and the desired global non-locality. 
This is a limitation of the current theory, and we plan to remove it in future work. 
Numerical evidence in Remark \ref{rmk_numerical} indicates that the computed geodesics stay in the interior $\al M_+$ for the time interval $(0,1)$.
\end{remark}

\begin{remark}\label{rmk_numerical}
    \begin{figure}[h!]
    \centering
    \includegraphics[width=0.6\textwidth]{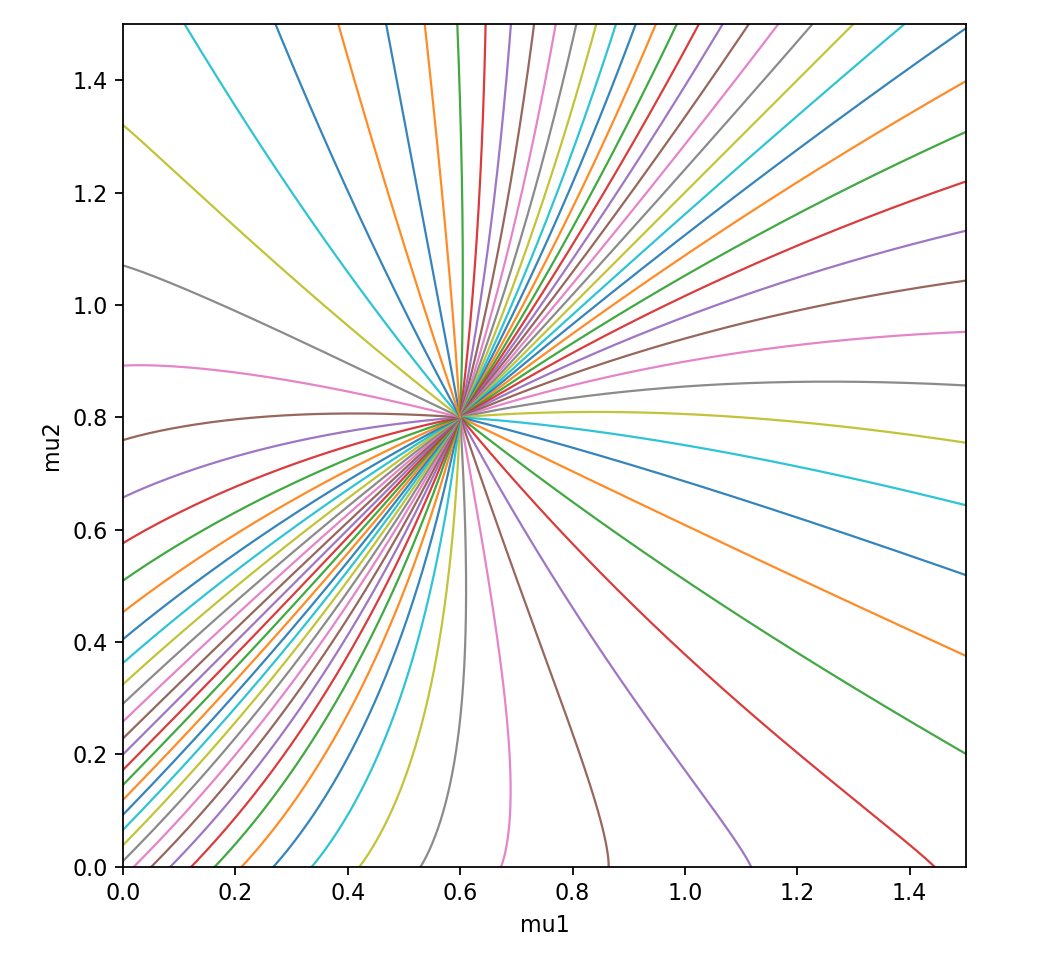}
    \captionsetup{width=\textwidth}
    \caption{Rays starting from $(0.6,0.8)^{\mathrm{T}}$}
    \label{fig-ray060080}
    \end{figure}
    \begin{figure}[h]
    \centering
    \includegraphics[width=0.6\textwidth]{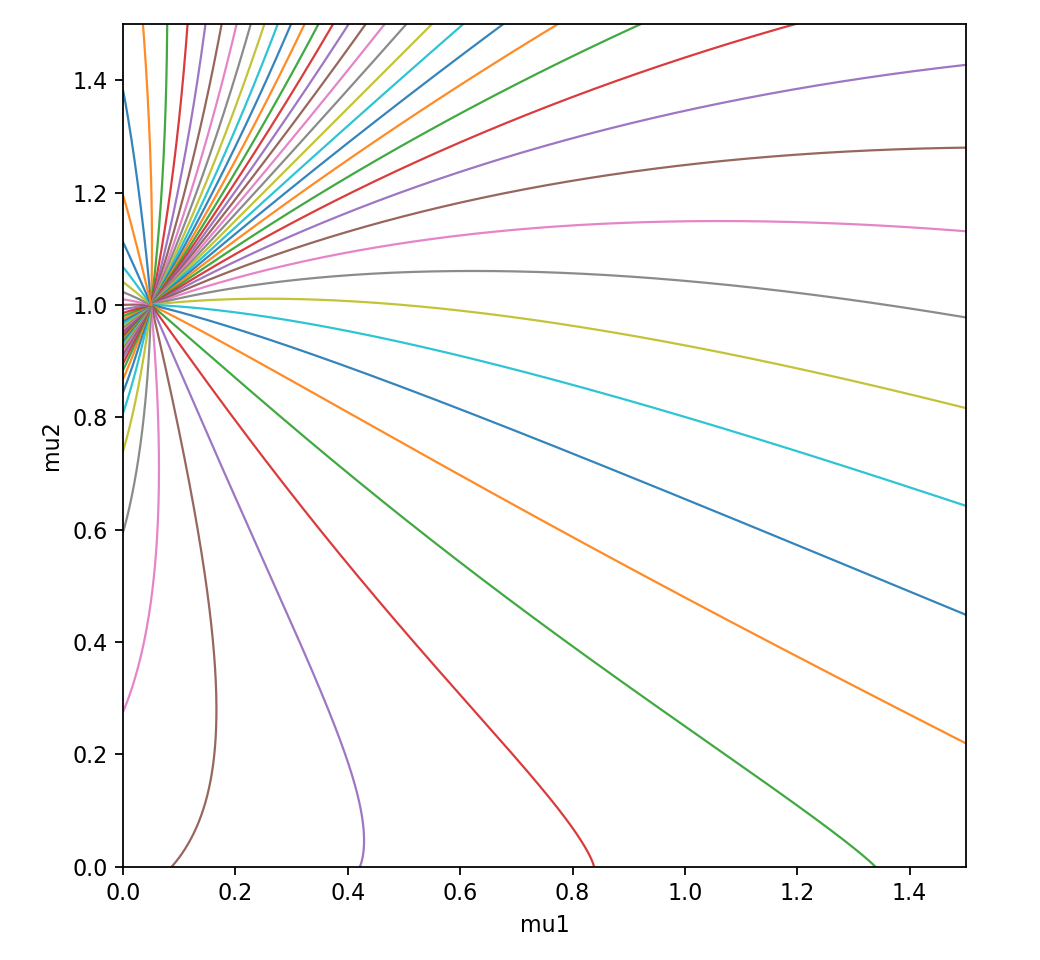}
    \captionsetup{width=\textwidth}
    \caption{Rays starting from $(0.05,1)^{\mathrm{T}}$}
    \label{fig-ray005100}
    \end{figure}
With the weak formulations \eqref{eq:GE-h-weak} and \eqref{eq:GE-psi-weak} available, 
    we pass to the strong ordinary differential system obtained by combining the continuity equation \eqref{eq_ce} with the geodesic equations. 
The resulting initial value problem reads
    \begin{equation}
    \label{eq_strong_geo}
    \left\{
    \begin{aligned}
    &
    \dot\mu_t(x) +\sum_{w}
    \nabla\psi_t(x,w)
    \theta(\mu_t(x),\mu_t(w))
    K(x,w)
    = h_t p(x),
    \\
    & \dot h_t
    = -\frac{b^2}{2 a^2}
    \sum_{z,w} \nabla\psi_t(z,w)^2  \partial_1\theta(\mu_t(z),\mu_t(w))K(z,w) \varpi(z)  p(z),
    \\
    &
    \nabla \dot \psi_t(x,y) 
    =
    \frac{1}{2}
    \sum_{z}
    \nabla\psi_t(z,x)^2
    K(x,z)
    \partial_1 \theta(\mu_t(x),\mu_t(z))
    -
    \frac{1}{2}
    \sum_{z}
    \nabla\psi_t(z,y)^2
    K(y,z)
    \partial_1 \theta(\mu_t(y),\mu_t(z)),
    \\
    &
    \mu_t|_{t=0}=\mu_0,
    \quad
    h_t|_{t=0}=h_0,
    \quad
    \nabla\psi_t|_{t=0}=\nabla\psi_0.
    \end{aligned}
    \right.
    \end{equation}
Based on this strong system we conduct numerical experiments that 
    integrate geodesic rays emanating from an interior state 
    and visualize their trajectories. The experiments proceed as follows: 
    we specify the background data
    \begin{equation*}
    K=
    \begin{pmatrix}
    0.8 & 0.2 \\
    0.4 & 0.6
    \end{pmatrix},
    \qquad
    \varpi=
    \begin{pmatrix}
    2/3 \\ 1/3
    \end{pmatrix},
    \qquad
    p=
    \begin{pmatrix}
    1 \\ 1
    \end{pmatrix},
    \qquad
    a=b=1,
    \end{equation*}
    we generate initial directions on the unit circle by setting
    \begin{equation*}
    v^{(k)}=(\psi_{\mathrm{free}},h)\in\mathbb{R}^2,\qquad
    v^{(k)}=(\cos\theta_k,\sin\theta_k),\qquad
    \theta_k=\frac{2\pi k}{n_{\mathrm{rays}}},\quad k=0,\ldots,n_{\mathrm{rays}}-1,
    \end{equation*}
    we embed the free potential into the full variable by taking 
    $\psi^{(k)}:=(0,\psi_{\mathrm{free}}^{(k)})^{\mathrm T}$ 
    and rescale $(\psi^{(k)},h^{(k)})$ to $(\psi_0^{(k)},h_0^{(k)})$ so that the initial speed constraint
    \begin{equation*}
    a^2\big(h_0^{(k)}\big)^2+b^2\,[A_{\mu_\ast}\psi_0^{(k)},\psi_0^{(k)}]=1
    \end{equation*}
    holds for the chosen interior starting point $\mu_\ast$, 
    and we set $n_{\mathrm{rays}}=72$ to achieve a dense, uniform coverage of directions around $\mu_\ast$.

The time evolution for each direction is obtained by 
    a fourth–order Runge–Kutta method with adaptive step control applied to \eqref{eq_strong_geo}. 
The numerical trajectory is terminated 
    when the first among the following events occurs: 
    (1) the final time $t=T_{\max}=3$ is reached, 
    (2) a component touches the boundary in the sense that $\min_i\mu_t(i)\le \varepsilon_{\mathrm{bd}}=10^{-6}$, 
    or (3) the adaptive time step decreases below $\mathrm d t_{\min}=5\times 10^{-4}$. 
The latter stopping event corresponds to a Runge–Kutta failure caused by stiffness near the boundary; 
    this stiffness is induced by the logarithmic mean $\theta(u,v)$
    and its partial derivative $\partial_1\theta(u,v)$, 
    which behave like $1/u$ or $1/v$ as either argument approaches zero, 
    and consequently amplify the coefficients entering $\nabla\dot\psi_t$ and $\dot h_t$ to the point that the adaptive controller enforces prohibitively small steps and stops.

The experimental outcome is summarized by ray plots from two interior points. 
In the first case (see Figure \ref{fig-ray060080}), rays start from $(0.6,0.8)^{\mathrm T}$.
In the second case (see Figure \ref{fig-ray005100}), rays start from $(0.05,1)^{\mathrm T}$.
The trajectories exhibit visible bending away from the boundary of the state space as they evolve.
\end{remark}

\section{Duality formula with Hamilton--Jacobi subsolutions}
\label{sec_duality}
In this section, we present a duality result for $\al W_p^{a,b}$ in terms of Hamilton--Jacobi subsolutions.
In \cite{erbar2019geometry}, the authors use such a duality principle to prove weak locality of geodesics.
In our setting, we have already shown in Section \ref{sec_geodesic} that weak locality always fails.
Nevertheless, we view the dual formula as a relevant and important result, as it provides a characterization of the metric and highlights the role of the momentum associated with geodesic curves.
The proof follows the same skeleton as \cite[Theorem 3.3]{erbar2019geometry}, with additional arguments to handle the source term.

\begin{definition}
A function $\phi\in \mathrm{H}^1((0,T);\mathbb{R}^{\al X})$ is called a \textit{Hamilton--Jacobi subsolution} if, for almost every $t\in(0,T)$, it holds that
    \begin{equation}
    \label{ineq_Hamliton-Jacobi}
    \langle \dot \phi_t,\mu \rangle_\varpi + \frac{1}{2 b^2}\,\| \nabla \phi_t\|_\mu^2 \le 0,
    \qquad \text{for all } \mu \in \al M.
    \end{equation}
We denote by $\mathsf{HJ}_{\al X}^{T,b}$ the set of all Hamilton--Jacobi subsolutions.
\end{definition}

For a normed vector space $E$ with topological dual $E^\ast$ 
    and a proper convex function $f:E\to\mathbb{R}\cup\{+\infty\}$, 
    the Legendre--Fenchel transform $f^\ast:E^\ast\to\mathbb{R}\cup\{+\infty\}$ is given by
    \begin{equation*}
    f^\ast(z^\ast)
    := \sup_{z\in E}\bigl\{\langle z,z^\ast\rangle - f(z)\bigr\},
    \qquad z^\ast\in E^\ast.
    \end{equation*}
We now recall the classical Fenchel--Rockafellar duality theorem \cite[Corollary 2.8.5]{zalinescu2002convex}, which will be used later in the proofs of the saddle-point statement and the duality result.
\begin{lemma}[Fenchel--Rockafellar duality]
\label{lem_FR}
Let $E$ and $F$ be locally convex topological vector spaces, and let $A:E\to F$ be a continuous linear map.
Let $f:E\to\mathbb{R}\cup\{+\infty\}$ and $g:F\to\mathbb{R}\cup\{+\infty\}$ be proper, convex, and lower semicontinuous.
Assume that there exists $x_{0}\in E$ such that $f(x_{0})<\infty$ and that $g$ is continuous at $Ax_{0}$.
Then
    \begin{equation*}
    \inf_{x\in E}\bigl[f(x)+g(Ax)\bigr]
    =
    \sup_{y\in F^{\ast}}\bigl[-f^{\ast}(-A^{\ast}y)-g^{\ast}(y)\bigr].
    \end{equation*}
\end{lemma}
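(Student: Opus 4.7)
The plan is to prove Fenchel--Rockafellar duality by the standard value/perturbation function method, which has three clean steps: weak duality via Young--Fenchel, identification of the dual objective with the Legendre--Fenchel conjugate of a suitable perturbation, and removal of the duality gap via the qualification hypothesis combined with the Fenchel--Moreau biconjugation theorem.

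First, I would establish weak duality. For any $x\in E$ and $y\in F^{\ast}$, the Young--Fenchel inequalities
\begin{equation*}
f(x) + f^{\ast}(-A^{\ast}y) \;\ge\; \langle -A^{\ast}y,\,x\rangle \;=\; -\langle y,\,Ax\rangle,
\qquad
g(Ax) + g^{\ast}(y) \;\ge\; \langle y,\,Ax\rangle
\end{equation*}
add to give $f(x)+g(Ax)\ge -f^{\ast}(-A^{\ast}y)-g^{\ast}(y)$. Taking $\inf_{x}$ on the left and $\sup_{y}$ on the right yields the easy inequality $\inf_{E}[f+g\circ A]\ge \sup_{F^{\ast}}[-f^{\ast}(-A^{\ast}\cdot)-g^{\ast}]$.

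Second, I would introduce the perturbation function $v:F\to\mathbb{R}\cup\{\pm\infty\}$ defined by
\begin{equation*}
v(u):=\inf_{x\in E}\bigl[f(x)+g(Ax+u)\bigr],
\end{equation*}
so that $v(0)$ is the primal value. Joint convexity of $(x,u)\mapsto f(x)+g(Ax+u)$ forces $v$ to be convex on $F$. A routine change of variable $z=Ax+u$ produces
\begin{equation*}
v^{\ast}(y)=\sup_{x,u}\bigl[\langle y,u\rangle-f(x)-g(Ax+u)\bigr]=\sup_{x}\bigl[-\langle A^{\ast}y,x\rangle-f(x)\bigr]+\sup_{z}\bigl[\langle y,z\rangle-g(z)\bigr]=f^{\ast}(-A^{\ast}y)+g^{\ast}(y),
\end{equation*}
so the dual objective is precisely $-v^{\ast}$, and $v^{\ast\ast}(0)=\sup_{y\in F^{\ast}}[-f^{\ast}(-A^{\ast}y)-g^{\ast}(y)]$.

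Third, I would use the qualification condition to close the gap $v(0)=v^{\ast\ast}(0)$. Since $g$ is continuous at $Ax_{0}$ with $f(x_{0})<\infty$, the estimate $v(u)\le f(x_{0})+g(Ax_{0}+u)$ shows that $v$ is bounded above on a neighborhood of $0\in F$; a proper convex function bounded above on a neighborhood of a point is continuous and locally finite there. Hence $v$ is lower semicontinuous at $0$ and $0$ lies in the interior of its effective domain, which are precisely the hypotheses of the Fenchel--Moreau biconjugation theorem at the point $0$, yielding $v(0)=v^{\ast\ast}(0)$. Combined with weak duality, this gives the claimed equality.

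The main obstacle is the passage $v(0)=v^{\ast\ast}(0)$ in a general locally convex setting, because biconjugation only succeeds when the effective epigraph is not separated from the horizontal hyperplane $\{v(0)\}\times F$ by a nonvertical functional. The qualification condition is designed exactly to rule out such a vertical separation, either by producing local continuity of $v$ (as above) or, equivalently, by guaranteeing that the convex sets $\operatorname{epi} f$ and $\{(x,r):g(Ax)<r-\varepsilon\}$ can be Hahn--Banach separated by a continuous linear functional; reconciling these two viewpoints and verifying the properness of $v$ in the absence of any norm structure is the only delicate point of the argument.
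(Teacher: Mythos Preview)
The paper does not actually prove this lemma; it states it as a classical result and cites \cite[Corollary 2.8.5]{zalinescu2002convex}. So there is no ``paper's own proof'' to compare against.

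Your proof outline via the perturbation function $v(u)=\inf_{x}[f(x)+g(Ax+u)]$ is the standard textbook approach and is correct. The identification $v^\ast(y)=f^\ast(-A^\ast y)+g^\ast(y)$ is right, and you have correctly identified the only genuinely delicate point: showing $v(0)=v^{\ast\ast}(0)$ requires ruling out that $v$ takes the value $-\infty$ somewhere (which would force $v^\ast\equiv+\infty$ and $v^{\ast\ast}\equiv-\infty$). The argument you sketch---bounding $v$ above near $0$ via $v(u)\le f(x_0)+g(Ax_0+u)$, then using convexity and a balanced neighborhood to bound $v$ below as well, hence obtaining continuity of $v$ at $0$ and properness---is exactly how this is handled in the LCTVS setting. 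One small addendum worth making explicit: if the primal value $v(0)$ is already $-\infty$, weak duality immediately forces the dual value to be $-\infty$ too, so the equality is trivial in that case; the qualification argument is only needed when $v(0)>-\infty$.
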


With the above preparations, we now establish the following saddle point result.
\begin{lemma}[Saddle point]
\label{lem_saddle}
Let $X$ be a locally convex topological vector space.
Let $Y$ be a reflexive Banach space and denote by $\langle \cdot,\cdot\rangle$ the dual pairing between $Y^{\ast}$ and $Y$.
Let $J:X\to\mathbb{R}\cup\{-\infty\}$ be concave and upper semicontinuous, and assume that $J\not\equiv-\infty$.
Let $K:X\to Y^{\ast}$ be a continuous linear map.
Let $\Psi:Y\to\mathbb{R}\cup\{+\infty\}$ be proper, convex, and weakly lower semicontinuous.
Define the saddle function $L:X\times Y\to\mathbb{R}\cup\{\pm\infty\}$ by
    \begin{equation*}
    L(\phi,h)=J(\phi)-\langle K\phi,h\rangle+\Psi(h).
    \end{equation*}
Assume that there exists $\phi_{0}\in X$ such that $J(\phi_{0})>-\infty$ and the convex function $\Psi^{\ast}$ is continuous at $K\phi_{0}$.
Then the minimax identity holds in $\mathbb{R}\cup\{\pm\infty\}$:
    \begin{equation*}
    \sup_{\phi\in X}\inf_{h\in Y} L(\phi,h)
    =
    \inf_{h\in Y}\sup_{\phi\in X} L(\phi,h).
    \end{equation*}
\end{lemma}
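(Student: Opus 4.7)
The plan is to reduce the minimax identity to Fenchel--Rockafellar duality (Lemma~\ref{lem_FR}) applied to a primal problem whose dual coincides with the other side of the minimax.

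First, I compute the easy partial extremum. For each fixed $\phi\in X$, the Legendre--Fenchel definition yields
\[
\inf_{h\in Y} L(\phi,h) = J(\phi) - \sup_{h\in Y}\bigl[\langle K\phi,h\rangle - \Psi(h)\bigr] = J(\phi) - \Psi^{\ast}(K\phi),
\]
so that
\[
\sup_{\phi\in X}\inf_{h\in Y} L(\phi,h) = -\inf_{\phi\in X}\bigl[(-J)(\phi) + \Psi^{\ast}(K\phi)\bigr].
\]
This puts the sup-inf side exactly in the primal form of Lemma~\ref{lem_FR} with $E=X$, $F=Y^{\ast}$, continuous linear map $A=K$, and data $f=-J$, $g=\Psi^{\ast}$.

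The next step is to verify the Fenchel--Rockafellar hypotheses. Concavity and upper semicontinuity of $J$, together with $J(\phi_0)>-\infty$, make $f=-J$ proper, convex, and lower semicontinuous on $X$. The conjugate $g=\Psi^{\ast}$ is convex and lower semicontinuous as a pointwise supremum of continuous affine functionals on $Y^{\ast}$, and it is proper because $\Psi$, being proper, convex, and weakly lower semicontinuous on the reflexive Banach space $Y$, admits an affine minorant. The qualification conditions $f(\phi_0)<\infty$ and continuity of $g$ at $A\phi_0=K\phi_0$ are precisely the hypotheses of the lemma. Lemma~\ref{lem_FR} then yields
\[
\inf_{\phi\in X}\bigl[(-J)(\phi)+\Psi^{\ast}(K\phi)\bigr] = \sup_{h\in (Y^{\ast})^{\ast}}\bigl[-(-J)^{\ast}(-K^{\ast}h) - (\Psi^{\ast})^{\ast}(h)\bigr].
\]

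To conclude, I identify the dual side with $-\inf_h\sup_\phi L$. Reflexivity of $Y$ gives $(Y^{\ast})^{\ast}=Y$ with the canonical pairing, and the Fenchel--Moreau theorem applied to $\Psi$ on $Y$ yields $(\Psi^{\ast})^{\ast}=\Psi$. Unpacking the Legendre transform, one computes $-(-J)^{\ast}(-K^{\ast}h)=\inf_{\phi}[\langle K\phi,h\rangle - J(\phi)]$, so the dual supremum becomes
\[
\sup_{h\in Y}\Bigl\{\inf_{\phi\in X}\bigl[\langle K\phi,h\rangle - J(\phi)\bigr] - \Psi(h)\Bigr\} = -\inf_{h\in Y}\sup_{\phi\in X} L(\phi,h),
\]
which combined with the first step produces the minimax identity. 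The main obstacle is essentially the bookkeeping of the Fenchel--Rockafellar hypotheses, in particular the properness of $\Psi^{\ast}$ and the Fenchel--Moreau identity $(\Psi^{\ast})^{\ast}=\Psi$, for which reflexivity of $Y$ is essential so that the biconjugate sits in $Y$ rather than in $Y^{\ast\ast}$.
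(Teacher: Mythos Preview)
Your proposal is correct and follows essentially the same route as the paper: compute $\inf_h L(\phi,h)=J(\phi)-\Psi^\ast(K\phi)$, apply Lemma~\ref{lem_FR} with $E=X$, $F=Y^\ast$, $A=K$, $f=-J$, $g=\Psi^\ast$, then use reflexivity of $Y$ and $(\Psi^\ast)^\ast=\Psi$ to identify the dual side with $-\inf_h\sup_\phi L$. Your hypothesis checks (properness of $\Psi^\ast$, Fenchel--Moreau) are even slightly more explicit than the paper's.
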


\begin{proof}
For every $\phi\in X$ we compute the inner infimum in $h$.
By definition of the Legendre--Fenchel transform,
    \begin{equation*}
    \Psi^{\ast}(\eta)=\sup_{h\in Y}\bigl[\langle \eta,h\rangle-\Psi(h)\bigr]
    \qquad
    \text{for all }\eta\in Y^{\ast}.
    \end{equation*}
Hence, for fixed $\phi$ we have
    \begin{equation*}
    \inf_{h\in Y}\bigl[\Psi(h)-\langle K\phi,h\rangle\bigr]
    =
    -\Psi^{\ast}(K\phi).
    \end{equation*}
Therefore
    \begin{equation}
    \label{eq_left_value}
    \sup_{\phi\in X}\inf_{h\in Y}L(\phi,h)
    =
    \sup_{\phi\in X}\bigl[J(\phi)-\Psi^{\ast}(K\phi)\bigr].
    \end{equation}
Set $f:=-J$.
Then $f:X\to\mathbb{R}\cup\{+\infty\}$ is proper, convex, and lower semicontinuous.
Using \eqref{eq_left_value}, we can rewrite the right-hand side as
    \begin{equation}
    \label{eq_left_convex}
    \sup_{\phi\in X}\inf_{h\in Y}L(\phi,h)
    =
    -\inf_{\phi\in X}\bigl[f(\phi)+\Psi^{\ast}(K\phi)\bigr].
    \end{equation}

We apply Lemma \ref{lem_FR} with $E=X$, $F=Y^{\ast}$, $A=K$, $g=\Psi^{\ast}$,
    along with $f=-J$ already set.
The qualification assumption in Lemma \ref{lem_FR} is satisfied by the hypothesis that $J(\phi_{0})>-\infty$ and $\Psi^{\ast}$ is continuous at $K\phi_{0}$.
We obtain
    \begin{equation}
    \label{eq_FR_applied}
    \inf_{\phi\in X}\bigl[f(\phi)+\Psi^{\ast}(K\phi)\bigr]
    =
    \sup_{h\in (Y^{\ast})^{\ast}}\bigl[-f^{\ast}(-K^{\ast}h)-(\Psi^{\ast})^{\ast}(h)\bigr].
    \end{equation}
Since $Y$ is reflexive, we identify $(Y^{\ast})^{\ast}=Y$.
Since $\Psi$ is convex and weakly lower semicontinuous, it is lower semicontinuous for the norm topology as well,
    and hence $(\Psi^{\ast})^{\ast}=\Psi$.
Combining \eqref{eq_left_convex} and \eqref{eq_FR_applied} yields
    \begin{equation}
    \label{eq_dual_value}
    \sup_{\phi\in X}\inf_{h\in Y}L(\phi,h)
    =
    \inf_{h\in Y}\bigl[f^{\ast}(-K^{\ast}h)+\Psi(h)\bigr].
    \end{equation}

It remains to identify $f^{\ast}(-K^{\ast}h)$ as a supremum over $\phi$.
By definition of the conjugate,
    \begin{equation*}
    f^{\ast}(\xi)=\sup_{\phi\in X}\bigl[\langle \xi,\phi\rangle-f(\phi)\bigr]
    \qquad
    \text{for all }\xi\in X^{\ast}.
    \end{equation*}
Using $f=-J$ and $\langle -K^{\ast}h,\phi\rangle=-\langle K\phi,h\rangle$, we get
    \begin{equation*}
    f^{\ast}(-K^{\ast}h)
    =
    \sup_{\phi\in X}\bigl[J(\phi)-\langle K\phi,h\rangle\bigr]
    =
    \sup_{\phi\in X}L(\phi,h)-\Psi(h).
    \end{equation*}
Plugging this into \eqref{eq_dual_value} gives
    \begin{equation*}
    \sup_{\phi\in X}\inf_{h\in Y}L(\phi,h)
    =
    \inf_{h\in Y}\sup_{\phi\in X}L(\phi,h),
    \end{equation*}
    which is the desired minimax identity.
\end{proof}

We now state the Kantorovich-type dual representation, extending the result and proof of \cite[Theorem 3.3]{erbar2019geometry} by dealing with the source term in our setting.
\begin{theorem}[Duality formula]
\label{thm_duality}
For any $\mu_0,\mu_1\in \al M$,
    \begin{equation}
    \label{eq_duality}
    \frac{1}{2}\,\al W_p^{a,b}(\mu_0,\mu_1)^2
    =
    \sup\left\{
    \langle \phi_1,\mu_1\rangle_\varpi
    -
    \langle \phi_0,\mu_0\rangle_\varpi
    -
    \frac{1}{2a^2}\int_0^1 \langle \phi_t,p\rangle_\varpi^2\,\di t
    \ \middle|\
    \phi\in \mathsf{HJ}_{\al X}^{1,b}
    \right\}.
    \end{equation}
Moreover, the same identity holds if the supremum is taken only over
    $\phi\in \mathrm C^{1}([0,1];\bb R^{\al X})$ satisfying \eqref{ineq_Hamliton-Jacobi}.
\end{theorem}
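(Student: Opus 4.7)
The plan is to follow the convex-analytic skeleton of \cite[Theorem 3.3]{erbar2019geometry}, using $\phi$ as a Lagrange multiplier for the continuity equation in the flux formulation \eqref{eq_Wpab_with_V}. The only new ingredient relative to the conservative case is the scalar pairing $h_t\langle\phi_t,p\rangle_\varpi$ coming from the source term, which will dualize to the penalty $\tfrac{1}{2a^2}\langle\phi_t,p\rangle_\varpi^2$ through scalar Young's inequality.

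For the easy direction $\sup\le\tfrac12\al W_p^{a,b}(\mu_0,\mu_1)^2$, I would fix $\phi\in\mathsf{HJ}_{\al X}^{1,b}$ and an admissible triple $(\mu_t,V_t,h_t)\in\mathcal{CE}_p(\mu_0,\mu_1;[0,1])$, pair the continuity equation against $\phi_t$, and integrate in time using the chain rule together with the spatial integration by parts \eqref{eq_int_by_parts} to obtain
\[
\langle\phi_1,\mu_1\rangle_\varpi - \langle\phi_0,\mu_0\rangle_\varpi = \int_0^1\!\bigl[\langle\dot\phi_t,\mu_t\rangle_\varpi + \langle\nabla\phi_t,V_t\rangle_\varpi + h_t\langle\phi_t,p\rangle_\varpi\bigr]\di t.
\]
The HJ inequality at $\mu=\mu_t$ controls the first summand by $-\tfrac{1}{2b^2}\|\nabla\phi_t\|_{\mu_t}^2$; the $\hat\mu_t$-weighted Cauchy--Schwarz followed by Young gives $\langle\nabla\phi_t,V_t\rangle_\varpi\le \tfrac{1}{2b^2}\|\nabla\phi_t\|_{\mu_t}^2 + \tfrac{b^2}{2}A'(\mu_t,V_t)$, so the two HJ contributions cancel; scalar Young gives $h_t\langle\phi_t,p\rangle_\varpi\le \tfrac{a^2}{2}h_t^2 + \tfrac{1}{2a^2}\langle\phi_t,p\rangle_\varpi^2$. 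Rearranging and taking the infimum over admissible triples via Lemma \ref{lem_W_rewrite} yields the bound.

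For the reverse inequality I would dualize the primal problem. With $\mu\in \mathrm{AC}([0,1];\al M)$ (endpoints fixed), $V,h\in\mathrm L^2$, and $\phi\in \mathrm H^1((0,1);\bb R^{\al X})$, introduce the Lagrangian
\[
L(\mu,V,h;\phi)=\tfrac12\mathfrak E^{a,b}_{\rm Quad}(\mu,V,h)+\int_0^1\langle\phi_t,\dot\mu_t+\nabla\cdot V_t-h_tp\rangle_\varpi\di t,
\]
so that the primal equals $\inf_{\mu,V,h}\sup_\phi L$. After integration by parts in space \eqref{eq_int_by_parts} and in time (producing the boundary pairings $\langle\phi_1,\mu_1\rangle_\varpi-\langle\phi_0,\mu_0\rangle_\varpi$), I apply the saddle-point Lemma \ref{lem_saddle} together with Fenchel--Rockafellar (Lemma \ref{lem_FR}) to swap the inf and the sup. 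The pointwise inner minimization in $V_t$ is a Legendre transform that evaluates to $-\tfrac{1}{2b^2}\|\nabla\phi_t\|_{\mu_t}^2$, and minimization in $h_t$ gives $-\tfrac{1}{2a^2}\langle\phi_t,p\rangle_\varpi^2$. The remaining dependence on $\mu_t\in\al M$ is $-\langle\dot\phi_t,\mu_t\rangle_\varpi - \tfrac{1}{2b^2}\|\nabla\phi_t\|_{\mu_t}^2$, which is positively $1$-homogeneous in $\mu_t$ because $\theta(\lambda u,\lambda v)=\lambda\theta(u,v)$; its infimum over the cone $\al M$ is $0$ precisely when $\langle\dot\phi_t,\mu\rangle_\varpi + \tfrac{1}{2b^2}\|\nabla\phi_t\|_\mu^2\le 0$ for every $\mu\in\al M$ (the HJ subsolution condition), and $-\infty$ otherwise. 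Assembling the boundary terms with the scalar penalty yields the dual representation.

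To restrict the supremum to $\mathrm C^1$ competitors, I would mollify any $\phi\in\mathsf{HJ}_{\al X}^{1,b}$ in time by a standard kernel $\rho_\varepsilon$; for each fixed $\mu\in\al M$ the map $t\mapsto \langle\dot\phi_t,\mu\rangle_\varpi + \tfrac{1}{2b^2}\|\nabla\phi_t\|_\mu^2$ is the sum of a linear-in-$\dot\phi$ term and a convex-in-$\phi$ term, so Jensen's inequality preserves the HJ property under convolution, while the boundary values and the penalty integral converge as $\varepsilon\to 0$. The main technical obstacle is the saddle-point step: checking the weak lower semicontinuity of $\mathfrak E^{a,b}_{\rm Quad}$, the continuity of the constraint map $(\mu,V,h)\mapsto \dot\mu+\nabla\cdot V-hp$, and the qualification hypothesis of Lemma \ref{lem_saddle}, and correctly interpreting the dual norm on edges where $\hat\mu_t$ degenerates via the quotient space $\mathscr G_{\mu_t}$ introduced in Section \ref{sec:2.1}.
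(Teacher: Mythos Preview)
Your outline is correct and leads to the same result, but it is organized differently from the paper's proof. You dualize the primal flux problem directly: treat $\phi$ as a Lagrange multiplier for the continuity equation, swap $\inf_{(\mu,V,h)}\sup_\phi$ to $\sup_\phi\inf_{(\mu,V,h)}$, and then eliminate $V,h,\mu$ pointwise in time, using the $1$-homogeneity of $\mu\mapsto\|\nabla\phi\|_\mu^2$ to recognize the $\inf_\mu$ as the indicator of the HJ constraint. The paper instead works on the \emph{test-function side}: it sets up Fenchel--Rockafellar on the Banach space $\bb E=\mathrm C^1([0,1];\bb R^{\al X})\times\mathrm L^2([0,1];\mathscr G_{\mathbf 1})$ with $f_a(\phi,\Phi)$ encoding the penalty together with the constraint $\Phi=\nabla\phi$, and $g_b$ the indicator of the integrated HJ set $\mathcal D^b$; the qualification point $(\bar\phi,\bar\Phi)=(-t\mathbf 1,\mathbf 0)$ is explicit, and the dual $\inf_{(c,\sigma,V)}\{f_a^\ast+g_b^\ast\}$ is then identified with $\tfrac12\al W_p^{a,b}(\mu_0,\mu_1)^2$.

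The trade-off is where the hard work lives. In your route it is the minimax swap: Lemma~\ref{lem_saddle} asks for a reflexive space for the primal variable and a qualification condition on $\Psi^\ast$, neither of which is immediate with $\mu\in\mathrm{AC}([0,1];\al M)$ carrying fixed endpoints and a cone constraint, and you also need an approximation argument to pass from the infimum over AC curves with prescribed endpoints to the pointwise infimum over $\mu_t\in\al M$. In the paper's route the swap is cheap (the Slater-type point $(-t\mathbf 1,\mathbf 0)$ lies in the interior of $\mathcal D^b$), but the price is the lengthy computation of $g_b^\ast$: one must build near-optimal $(\phi^{\delta,\ep},\Phi^{\delta,\ep})\in\mathcal D^b$ by regularizing $(\sigma,V)$ and lifting by $\delta\mathbf 1$, then control the remainder via the Euler identity $\partial_1\theta\cdot s+\partial_2\theta\cdot t=\theta(s,t)$ and the inequality $\partial_1\theta(s,t)u+\partial_2\theta(s,t)v\ge\theta(u,v)$. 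Your homogeneity observation is the conceptual counterpart of that construction, and your easy direction and the mollification argument for the $\mathrm C^1$ restriction match the paper's.
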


\begin{proof}
Since \eqref{ineq_Hamliton-Jacobi} is convex in the pair $(\dot\phi_t,\nabla\phi_t)$ and is imposed pointwise in $t$,
    we may regularize $\phi$ in time by convolution with a nonnegative mollifier.
The Jensen inequality then shows that the mollified functions still satisfy \eqref{ineq_Hamliton-Jacobi}.
Moreover, the mollification converges uniformly on $[0,1]$.
Therefore every $\phi\in \mathsf{HJ}_{\al X}^{1,b}$ admits a sequence of $\mathrm C^1$ approximations in $\mathsf{HJ}_{\al X}^{1,b}$
    that converges uniformly to $\phi$.
This yields the last assertion of the theorem.
Accordingly, it suffices to establish the dual representation with $\mathrm C^{1}$ test functions.
We split the proof into four steps.
\vspace{0.2cm}

\noindent$\bullet$ {\textbf{Step 1. Rewrite \eqref{eq_duality} with Fenchel--Rockafellar duality.}}
We begin by rewriting the supremum in \eqref{eq_duality} as a convex-analytic problem on a suitable Banach space.

Recall that $\mathscr G_{\mathbf 1}$ is the quotient of $\bb R^{\al X\times \al X}$ obtained by identifying two functions
whenever they coincide on every pair $(x,y)$ with $K(x,y)>0$.
Endowed with $\langle\cdot,\cdot\rangle_{\mathbf 1}=\langle\cdot,\cdot\rangle_{\varpi}$,
    the space $(\mathscr G_{\mathbf 1},\langle\cdot,\cdot\rangle_{\varpi})$ is a Hilbert space.
We then consider the Banach space
    \begin{equation*}
    \bb E:=\mathrm C^{1}([0,1];\bb R^{\al X})\times \mathrm L^{2}([0,1];\mathscr G_{\mathbf 1}).
    \end{equation*}
Here, we endow $\mathrm C^1([0,1];\bb R^{\al X})$ with the norm
    \begin{equation*}
    \|\phi\|_{\mathrm C^1}
    =
    \|\phi_0\|_{\varpi,1}
    +
    \|\dot\phi\|_{\mathrm C^0([0,1];\|\cdot\|_{\varpi,1})}.
    \end{equation*}
With this choice, the map $\phi\mapsto(\phi_0,\dot\phi)$ is an isometric isomorphism
    from $\mathrm C^1([0,1];\bb R^{\al X})$ onto $\bb R^{\al X}\times \mathrm C^0([0,1];\bb R^{\al X})$.
Consequently, the dual space of $ \mathrm C^1([0,1];\bb R^{\al X}) $ identifies with
    $\bb R^{\al X}\times \mathrm M([0,1];\bb R^{\al X})$ through the Riesz representation theorem,
    where $\mathrm M([0,1];\bb R^{\al X})$ denotes the space of $\bb R^{\al X}$-valued finite measures on $[0,1]$.
We may identify the dual space $\bb E^\ast$ as
    \begin{equation*}
    \bb E^\ast
    =
    \bb R^{\al X}\times \mathrm M([0,1];\bb R^{\al X})\times \mathrm L^{2}([0,1];\mathscr G_{\mathbf 1}).
    \end{equation*}
For $(\phi,\Phi)\in \bb E$ and $(c,\sigma,V)\in \bb E^\ast$, the duality pairing is
    \begin{equation*}
    \langle (\phi,\Phi),(c,\sigma,V)\rangle_{\varpi}
    =
    \langle \phi_0,c\rangle_{\varpi}
    +
    \int_0^1 \langle \dot\phi_t,\di \sigma(t)\rangle_{\varpi}
    +
    \int_0^1 \langle \Phi_t,V_t\rangle_{\varpi}\,\di t.
    \end{equation*}

We introduce two extended-real-valued functionals $f_a,g_b:\bb E\to \bb R\cup\{+\infty\}$ by
    \begin{gather*}
    f_a(\phi,\Phi)=
    \left\{
    \begin{aligned}
    &
    -\langle \phi_1,\mu_1\rangle_\varpi + \langle \phi_0,\mu_0\rangle_\varpi
    +\frac{1}{2a^2}\int_0^1 \langle \phi_t,p\rangle_\varpi^2\,\di t,
    && \Phi=\nabla\phi,
    \\
    &+\infty,
    && \text{otherwise},
    \end{aligned}
    \right.
    \\
    g_b(\phi,\Phi)=
    \left\{
    \begin{aligned}
    &0,
    && (\phi,\Phi)\in \mathcal D^{b},
    \\
    &+\infty,
    && \text{otherwise}.
    \end{aligned}
    \right.
    \end{gather*}
Here $(\phi,\Phi)\in \bb E$ is said to belong to $\mathcal D^{b}$ if, for every continuous curve $t\mapsto \eta_t\in \al M$,
    \begin{equation*}
    \int_0^1
    \langle \dot\phi_t,\eta_t\rangle_\varpi
    +\frac{1}{2b^2}\|\Phi_t\|_{\eta_t}^2
    \,\di t
    \le 0.
    \end{equation*}
For $\phi\in \mathrm C^1([0,1];\bb R^{\al X})$, the condition $(\phi,\nabla\phi)\in\mathcal D^b$
    is equivalent to \eqref{ineq_Hamliton-Jacobi} holding for every $t\in[0,1]$.
The implication follows by testing the defining inequality of $\mathcal D^b$
    with curves $\eta$ supported in a small time interval and then letting the interval shrink to a point.
Conversely, integrating \eqref{ineq_Hamliton-Jacobi} in time yields the defining inequality of $\mathcal D^b$.
Consequently, the right-hand side of \eqref{eq_duality} can be rewritten as
    \begin{equation*}
    \sup_{\phi\in \mathsf{HJ}^{1,b}_{\al X}}
    \left\{
    \langle \phi_1,\mu_1\rangle_\varpi
    -
    \langle \phi_0,\mu_0\rangle_\varpi
    -
    \frac{1}{2a^2}\int_0^1 \langle \phi_t,p\rangle_\varpi^2\,\di t
    \right\}
    =
    \sup_{(\phi,\Phi)\in \bb E}
    \left\{-f_a(\phi,\Phi)-g_b(\phi,\Phi)\right\}.
    \end{equation*}

It is straightforward to verify that both $f_a$ and $g_b$ are convex.
In addition, if we set $\bar\phi_t=t(-1,-1,\dots,-1)$ and $\bar\Phi_t=\nabla\bar\phi_t\equiv \mathbf 0$,
    then $f_a(\bar\phi,\bar\Phi)$ and $g_b(\bar\phi,\bar\Phi)$ are finite,
Besides, $g_b$ is continuous at $(\bar\phi,\bar\Phi)$,
    because $g_b$ is continuous at a point $(\phi,\Phi)\in\mathcal D^b$
    whenever $(\phi,\Phi)$ lies in the interior of $\mathcal D^b$ with respect to the $\bb E$-topology.
For the special choice $\bar\phi_t=-t\mathbf 1$ and $\bar\Phi\equiv \mathbf 0$,
    one checks that the defining inequality of $\mathcal D^b$ is strict at $(\bar\phi,\bar\Phi)$,
    and the left-hand side depends continuously on $(\phi,\Phi)$ in the $\bb E$ norm.
Therefore there exists $\rho>0$ such that the open ball $B_\rho^{\bb E}(\bar\phi,\bar\Phi)\subset \mathcal D^b$,
    which implies that $g_b$ is continuous at $(\bar\phi,\bar\Phi)$.
We may therefore invoke Lemma~\ref{lem_FR} with $E=F=\bb E$ and $A=\mathrm{Id}_{\bb E}$ to obtain
    \begin{equation*}
    \sup_{(\phi,\Phi)\in \bb E}
    \left\{-f_a(\phi,\Phi)-g_b(\phi,\Phi)\right\}
    =
    \inf_{(c,\sigma,V)\in \bb E^\ast}
    \left\{f_a^\ast(c,\sigma,V)+g_b^\ast(-c,-\sigma,-V)\right\}.
    \end{equation*}

To complete the proof of \eqref{eq_duality}, it remains to compute the Legendre--Fenchel conjugates
    $f_a^\ast$ and $g_b^\ast$, and to show that the infimum on the right-hand side coincides with
    $\frac12 \al W_p^{a,b}(\mu_0,\mu_1)^2$.
\vspace{0.2cm}

\noindent$\bullet$ {\textbf{Step 2. Computation of $f_a^\ast$.}}
For the Legendre--Fenchel transform $f_a^\ast$,
    \begin{align*}
    f_a^{\ast}(c,\sigma,V)
    &=
    \sup_{(\phi,\Phi)\in \mathbb{E}}
    \left\{
    \langle(\phi,\Phi),(c,\sigma,V)\rangle_\varpi
    -
    f_a(\phi,\Phi)
    \right\}
    \\
    &=
    \sup_{\phi\in\mathrm{C}^1([0,1];\mathbb{R}^{\al X})}
    \left\{
    \langle \phi_{0}, c\rangle_\varpi
    +
    \int_{0}^{1}\langle \dot\phi_{t}, \di \sigma(t)\rangle_\varpi
    +
    \int_{0}^{1}\langle \nabla\phi_{t}, V_{t}\rangle_\varpi\di t
    \right.
    \\
    &
    \qquad \qquad \qquad \qquad
    \left.
    +
    \langle \phi_{1}, \mu_{1}\rangle_\varpi
    -
    \langle \phi_{0}, \mu_{0}\rangle_\varpi
    -
    \frac{1}{2a^2} \int_0^1 \langle \phi_t, p\rangle_\varpi^2 \di t
    \right\}
    \\
    &=
    \sup_{\phi\in\mathrm{C}^1([0,1];\mathbb{R}^{\al X})}
    \left\{
    \langle \phi_{1}, \mu_{1}\rangle_\varpi
    -
    \langle \phi_{0}, \mu_0-c\rangle_\varpi
    +
    \int_{0}^{1}\langle \dot\phi_{t}, \di \sigma(t)\rangle_\varpi
    +
    \int_{0}^{1}\langle \nabla\phi_{t}, V_{t}\rangle_\varpi\di t
    \right.
    \\
    &
    \qquad \qquad \qquad \qquad
    \left.
    -
    \frac{1}{2a^2} \int_0^1 \langle \phi_t, p\rangle_\varpi^2 \di t
    \right\}
    \\
    &=
    \sup_{\phi\in\mathrm{C}^1([0,1];\mathbb{R}^{\al X})}
    \left\{
    \langle \phi_{1}, \mu_{1}\rangle_\varpi
    -
    \langle \phi_{0}, \mu_0-c \rangle_\varpi
    +
    \int_{0}^{1}\langle \dot\phi_{t}, \di \sigma(t)\rangle_\varpi
    +
    \int_{0}^{1}\langle \nabla\phi_{t}, V_{t}\rangle_\varpi\di t
    \right.
    \\
    &
    \qquad \qquad \qquad \qquad
    \left.
    +
        \inf_{h\in \mathrm{L}^2([0,1];\mathbb{R})}
        \left\{
        \frac{a^2}{2} \int_0^1 h_t^2 \di t
        -
        \int_0^1 \langle \phi_t,h_t p\rangle_\varpi \di t
        \right\}
    \right\}.
    \end{align*}
The last expression is from the rewrite:
    \begin{equation*}
    \frac{a^2}{2} h_t^2
    -
    \langle \phi_t,h_t p\rangle_\varpi
    =
    \frac{a^2}{2} \left(h_t-\frac{1}{a^2}\langle\phi_t,p \rangle_\varpi\right)^2
    -
    \frac{1}{2a^2}\langle \phi_t, p\rangle_\varpi^2.
    \end{equation*}
By the saddle point result in Lemma \ref{lem_saddle}, we can exchange the supremum and infimum operators and get
    \begin{align*}
    f_a^{\ast}(c,\sigma,V)
    &=
    \inf_{h\in \mathrm{L}^2([0,1];\mathbb{R})}
    \Bigg\{
    \sup_{\phi\in\mathrm{C}^1([0,1];\mathbb{R}^{\al X})}
    \bigg\{
        \int_{0}^{1}\langle \dot\phi_{t}, \di \sigma(t)\rangle_\varpi
        +
        \int_{0}^{1}\langle \nabla\phi_{t}, V_{t}\rangle_\varpi\di t
        -
        \int_0^1 \langle \phi_t,h_t p\rangle_\varpi \di t
    \\
    &
    \qquad \qquad \qquad \qquad
        \langle \phi_{1}, \mu_{1}\rangle_\varpi
        -
        \langle \phi_{0}, \mu_0-c \rangle_\varpi
    \bigg\}
    +\frac{a^2}{2} \int_0^1 h_t^2 \di t
    \Bigg\}.
    \end{align*}
Hence, by the homogeneity of the last expression in $\phi$,
we necessarily have $f_a^\ast(c,\sigma,V)=+\infty$ unless there exists a function $h$ such that
$(\sigma,V,h)$ satisfies the continuity equation
    \begin{equation*}
    \partial_t \sigma(t)+\nabla\cdot V_t=-h_t p
    \end{equation*}
    with boundary data $-(\mu_0-c)$ and $-\mu_1$, in the weak sense that
    \begin{equation}
    \label{eq_weak_ce_duality}
    \langle \phi_{1}, -\mu_{1}\rangle_\varpi
    -
    \langle \phi_{0}, -(\mu_{0}-c)\rangle_\varpi
    =
    \int_{0}^{1}\langle \dot\phi_{t}, \di\sigma(t)\rangle_\varpi
    +
    \int_{0}^{1}\langle \nabla\phi_{t}, V_{t}\rangle_\varpi \,\di t
    -
    \int_0^1 \langle \phi_t,h_t p\rangle_\varpi \,\di t
    \end{equation}
    for every $\phi\in \mathrm C^{1}([0,1];\bb R^{\al X})$.
As the distributional derivative $\partial_t \sigma(t)=-h_t p-\nabla\cdot V_t$ belongs to $\mathrm L^2([0,1];\bb R^{\al X})$,
    we have $\sigma\in \mathrm H^1([0,1];\bb R^{\al X})$.
In particular, the $\bb R^{\al X}$-valued measure $\sigma$ is absolutely continuous with respect to Lebesgue measure
    and admits a density, still denoted by $(\sigma_t)_{t\in[0,1]}$, such that $\di\sigma(t)=\sigma_t\di t$.
Moreover, \eqref{eq_weak_ce_duality} enforces the endpoint conditions $\sigma_0=-(\mu_0-c)$ and $\sigma_1=-\mu_1$.
Consequently,
    \begin{equation}
    \label{eq_Fa_Legendre_transform}
    f_a^\ast(c,\sigma,V)
    =
    \left\{
    \begin{aligned}
    &
    \inf\left\{\frac{a^2}{2}\int_0^1 h_t^2\,\di t
    \ \middle|\ 
    h\in \mathcal{CE}'_p(-\sigma,-V;\mu_0-c,\mu_1)\right\},
    && \mathcal{CE}'_p(-\sigma,-V;\mu_0-c,\mu_1)\neq\emptyset,
    \\
    &+\infty,
    && \text{otherwise},
    \end{aligned}
    \right.
    \end{equation}
    where $h\in \mathcal{CE}'_p(-\sigma,-V;\mu_0-c,\mu_1)$
    means that $h$ satisfies $(-\sigma,-V,h)\in \mathcal{CE}_p(\mu_0-c,\mu_1)$
    yet dropping the positivity requirement for $-\sigma$,
    and we identify the measure $\sigma$ with its $\mathrm H^{1}$-representative $(\sigma_t)_t$.
\vspace{0.2cm}

\noindent$\bullet$ {\textbf{Step 3. Computation of $g_b^\ast$.}}
Since we only need to evaluate $g_b^\ast$ at those $(c,\sigma,V)$ for which $f_a^\ast(-c,-\sigma,-V)$ is finite,
we may assume that $\di\sigma(t)=\sigma_t\di t$ with $(\sigma_t)_t\in \mathrm H^{1}([0,1];\bb R^{\al X})$.
We claim that
\begin{equation}
\label{eq_Gb_Legendre_transform}
g_b^\ast(c,\sigma,V)
=
\left\{
\begin{aligned}
& \frac{b^2}{2}\int_0^1 A'(\sigma_t,V_t)\di t,
&& c=0,
\\
& +\infty,
&& \text{otherwise}.
\end{aligned}
\right.
\end{equation}
We further split the verification into four sub-steps.

\noindent$\star$ \textbf{Step 3.1. If $c\neq0$ or $\sigma_t \notin \al M$, then $g_b^\ast(c,\sigma,V)=+\infty$.}
Indeed,
\begin{align*}
g_b^\ast(c,\sigma,V)
&=
\sup_{(\phi,\Phi)\in \bb E}
\left\{
\langle(\phi,\Phi),(c,\sigma,V)\rangle_\varpi
-
g_b(\phi,\Phi)
\right\}
\\
&=
\sup_{(\phi,\Phi)\in \mathcal D^{b}}
\left\{
\langle \phi_0,c\rangle_\varpi
+
\int_0^1
\big(
\langle \dot\phi_t,\sigma_t\rangle_\varpi
+
\langle \Phi_t,V_t\rangle_\varpi
\big)\di t
\right\}.
\end{align*}
Since $(\phi,\Phi)\in \mathcal D^{b}$ implies $(\phi+c',\Phi)\in \mathcal D^{b}$ for every $c'\in \bb R^{\al X}$,
it follows that $g_b^\ast(c,\sigma,V)=+\infty$ unless $c= 0$.

Moreover, the condition $\sigma_t\in \al M$ is necessary.
Indeed, assume that there exist $x_\ast\in\al X$ and a measurable set $I\subset(0,1)$ with $|I|>0$
such that $\int_I \sigma_t(x_\ast)\di t<0$.
For $k\ge 0$, define $\phi^k\in \mathrm C^1([0,1];\bb R^{\al X})$ by prescribing
\begin{equation*}
\dot\phi_t^k(x)=
\left\{
\begin{aligned}
&-k,&& t\in I,x=x_\ast,
\\
&0,&& \text{otherwise},
\end{aligned}
\right.
\qquad
\phi_0^k=0,
\end{equation*}
and set $\Phi^k\equiv \mathbf 0$.
Since $\dot\phi_t^k\le 0$ componentwise for all $t$ and $\Phi^k\equiv \mathbf 0$,
we have $(\phi^k,\Phi^k)\in \mathcal D^b$.
Therefore,
\begin{equation*}
g_b^\ast(0,\sigma,V)
\ge
\sup_{k\ge 0}\int_0^1 \langle \dot\phi_t^k,\sigma_t\rangle_\varpi\di t
=
\sup_{k\ge 0}\left\{-k\varpi(x_\ast)\int_I \sigma_t(x_\ast)\di t\right\}
=
+\infty.
\end{equation*}
Consequently, $g_b^\ast(c,\sigma,V)<\infty$ forces $\sigma_t(x)\ge 0$ for all $x\in\al X$ and $t\in[0,1]$.
\vspace{0.2cm}

\noindent$\star$ \textbf{Step 3.2. When $c=0$ and $\int_0^1 A'(\sigma_t,V_t)\di t<\infty$, $g_b^\ast(c,\sigma,V)\le$ RHS-\eqref{eq_Gb_Legendre_transform}.}
Assume now that $c=0$ and that $\int_0^1 A'(\sigma_t,V_t)\di t<\infty$.
We then obtain one inequality in \eqref{eq_Gb_Legendre_transform}:
\begin{align*}
g_b^\ast(0,\sigma,V)
&=
\sup_{(\phi,\Phi)\in \mathcal D^{b}}
\left\{
\int_0^1
\big(
\langle \dot\phi_t,\sigma_t\rangle_\varpi
+
\langle \Phi_t,V_t\rangle_\varpi
\big)\di t
\right\}
\\
&\le
\sup_{(\phi,\Phi)\in \mathcal D^{b}}
\left\{
\int_0^1
\left(
-\frac{1}{2b^2}\|\Phi_t\|_{\sigma_t}^2
+
\langle \Phi_t,V_t\rangle_\varpi
\right)\di t
\right\}
\le
\frac{b^2}{2}\int_0^1 A'(\sigma_t,V_t)\di t.
\end{align*}
Since $(\sigma_t)_t\in \mathrm H^{1}([0,1];\bb R^{\al X})$ and $\bb R^{\al X}$ is finite dimensional,
    we have $\sigma\in \mathrm C^{0}([0,1];\bb R^{\al X})$.
In particular, the curve $t\mapsto \eta_t:=\sigma_t$ is admissible in the definition of $\mathcal D^b$,
    thus the first inequality follows from the definition of $\mathcal D^{b}$.
The second inequality follows from
    \begin{align*}
    \langle \Phi,V\rangle_\varpi
    &=
    \frac12\sum_{x,y\in \al X}\Phi(x,y)V(x,y)K(x,y)\varpi(x)
    \\
    &\le
    \frac12\sum_{x,y\in \al X}
    \left[
    \frac{1}{2b^2}\Phi(x,y)^2\hat\sigma(x,y)
    +\frac{b^2}{2}\alpha(V(x,y),\sigma(x),\sigma(y))
    \right]K(x,y)\varpi(x)
    \\
    &=
    \frac{1}{2b^2}\|\Phi\|_{\sigma}^2
    +\frac{b^2}{2}A'(\sigma,V).
    \end{align*}
\vspace{0.2cm}

\noindent$\star$ \textbf{Step 3.3. When $c=0$, $g_b^\ast(c,\sigma,V)\ge$ RHS-\eqref{eq_Gb_Legendre_transform}.}
It remains to prove the matching lower bound when $c=0$.
The strategy is to construct, for small parameters $\ep>0$ and $\delta>0$,
    an explicit pair $(\phi^{\delta,\ep},\Phi^{\delta,\ep})\in\mathcal D^b$
    whose value in the dual functional approaches $(b^2/2)\int_0^1 A'(\sigma_t,V_t)\di t$.
$\int_0^1 A'(\sigma_t,V_t)\di t$ can be finite or infinite.
As this step is very lengthy, we leave them in the Appendix \ref{App-B-3.3}.
\vspace{0.2cm}

Combining Step 3.1--3.3, we conclude that \eqref{eq_Gb_Legendre_transform} is verified.
\vspace{0.2cm}

\noindent$\bullet$ \textbf{Step 4. Conclusion.}
With \eqref{eq_Fa_Legendre_transform} and \eqref{eq_Gb_Legendre_transform} now ready,
    we have
    \begin{align*}
    & f_a^\ast(-c,-\sigma,-V)+g^\ast_b(c,\sigma,V)
    \\
    &=
    \left\{
    \begin{aligned}
    & \inf_h \left\{\frac{a^2}{2} \int_0^1 h_t^2 \di t\right\} +\frac{b^2}{2}\int_0^1 A'(\sigma_t,V_t) \di t,
    && c=0 ~\text{and}~ \exists h, \text{ s.t. }(\sigma,V,h)\in\mathcal{CE}_p(\mu_0,\mu_1),
    \\
    & +\infty,
    && \text{otherwise}.
    \end{aligned}
    \right.
    \end{align*}
Thus,
    \begin{equation*}
    \inf_{(c,\sigma,V)\in\mathbb{E}^\ast} \{ f_a^\ast(-c,-\sigma,-V)+ g^\ast_b(c,\sigma,V) \}
    =
    \frac{1}{2}
    \inf_{(\sigma,V,h)\in\mathcal{CE}_p(\mu_0,\mu_1)}
    \mathfrak E_{\rm Quad}^{a,b}(\sigma,V,h)
    =
    \frac{1}{2}
    \al W_p^{a,b}(\mu_0,\mu_1)^2,
    \end{equation*}
    and \eqref{eq_duality} holds.
\end{proof}

\section{Conclusions}
\label{sec_conc}
We studied the geometry and duality of the Benamou--Brenier-type metric $\al W^{a,b}_p$
for nonnegative measures on a finite reversible Markov chain.
We proved that $(\al M,\al W^{a,b}_p)$ is a geodesic metric space and uncovered a strong nonlocal property:
for almost every time, geodesics are supported on the whole chain, independently of the endpoints.
As an application, we compared $\al W^{a,b}_p$ with shift--transport-type distances and obtained an explicit bound with a strictness criterion.
Motivated by the conjecture that full support persists for all times when the endpoints are fully supported,
we established conditional uniqueness of geodesics and derived the weak geodesic equation.
We also proved a Kantorovich-type duality formula in terms of Hamilton--Jacobi subsolutions,
yielding a variational characterization of $\al W^{a,b}_p$ adapted to the constrained source direction.
These results provide basic tools for nonconservative transport on graphs in which reservoir-like effects are intrinsic.
Several directions remain open, including closing the gap between almost-everywhere and all-time nonlocality,
extending the theory to infinite graphs, and exploring potential applications of $\al W^{a,b}_p$ in data-driven tasks such as image recognition.
We leave these questions for future work.

\section*{Acknowledgments}
The work of X. Xue is supported by the Natural Science Foundation of China (grants 12271125), and the work of X. Wang was supported by the Natural Science Foundation of China (grants 123B2003), the China Postdoctoral Science Foundation (grants 2025M774290), and Heilongjiang Province Postdoctoral Funding (grants LBH-Z24167).

\section*{Conflict of interest statement}
The authors declare no conflicts of interest.

\section*{Data availability statement}
The data supporting the findings of this study are available from the corresponding author upon reasonable request.

\section*{Ethical statement}
The authors declare that this manuscript is original, has not been published before, and is not currently being considered for publication elsewhere. The study was conducted by the principles of academic integrity and ethical research practices. All sources and contributions from others have been properly acknowledged and cited. The authors confirm that there is no fabrication, falsification, plagiarism, or inappropriate manipulation of data in the manuscript.

\appendix
\addtocontents{toc}{\protect\setcounter{tocdepth}{1}} 
\section{Proofs in Section \ref{sec_geodesic}}

\subsection{Proof of Lemma \ref{lem_alpha_convex}}
\label{sec_app_e}
The proof is divided into two steps.

\noindent$\bullet$  \textbf{Step 1. Convexity.}
Recalling the integral representation \eqref{eq_logarithmic_mean_reciprocal} of the reciprocal logarithmic mean,
    \begin{equation*}
    \frac{1}{\theta(s,t)}=\int_0^1 \frac{1}{\xi s+(1-\xi)t}\di \xi\qquad \forall~ s,t>0,
    \end{equation*}
    one has for $\theta(s,t)>0$ the integral rewrite for $\alpha$,
    \begin{equation}\label{eq:alpha-int}
    \alpha(v,s,t)=\int_0^1 \frac{v^2}{\xi s+(1-\xi)t}\di \xi.
    \end{equation}
For fixed $\xi\in[0,1]$, the mapping
    \begin{equation*}
    (v,s,t)\longmapsto \frac{v^2}{\xi s+(1-\xi)t}
    \end{equation*}
    is the composition of 
        the perspective $(\eta,y)\mapsto \eta^2/y$ 
        with the affine map $y=\xi s+(1-\xi)t$.
Hence it is convex on $\mathbb{R}\times(0,\infty)^2$, 
    and it extends to a convex, lower semi-continuous function on $\mathbb{R}\times[0,\infty)^2$ 
    via the usual convention $v^2/0=0$ if $v=0$ and $+\infty$ otherwise. 
Integrating \eqref{eq:alpha-int} over $\xi\in[0,1]$ preserves 
    convexity and lower semi-continuity for $\alpha$ on $\mathbb{R}\times[0,\infty)^2$.

\noindent$\bullet$  \textbf{Step 2. Affine cases.}
Fix a nondegenerate affine line lying in $\mathbb{R}\times(0,\infty)^2$, parametrized by
    \begin{equation*}
    \big(\mathbf v(\tau),\mathbf s(\tau),\mathbf t(\tau)\big)
    =(d,u,z)+\tau\,(c,r,w),\qquad \tau\in(0,1),
    \end{equation*}
    where $(d,u,z)\in\mathbb{R}\times(0,\infty)^2$ is a base point and $(c,r,w)\in\mathbb{R}^3\setminus\{(0,0,0)\}$ is its direction vector.
We aim to prove that $\tau \mapsto \alpha(\mathbf v(\tau),\mathbf s(\tau),\mathbf t(\tau))$ is affine on $[0,1]$ if and only if
    \begin{gather} 
    (\romannumeral 1)~ \mathbf v(\tau)\equiv 0, 
    \text{ or }
    (\romannumeral 2)~
    \exists\kappa\in\mathbb{R},
    \text{ with } 
    (d,u,z)=\kappa(c,r,w). 
    \label{eq:alpha-cases} 
    \end{gather}

By \eqref{eq:alpha-int},
    \begin{equation*}
    \begin{aligned}
    \alpha(\mathbf v(\tau),\mathbf s(\tau),\mathbf t(\tau))
    = &
    \int_0^1 \frac{ \mathbf{v}(\tau)^2}{\xi\mathbf s(\tau)+(1-\xi) \mathbf t(\tau)} \di \xi
    \\
    = &
    \int_0^1 \frac{(c\tau + d)^2}{\xi ( r\tau+u)+(1-\xi)(w\tau+z)} \di \xi
    \\
    =&
    \int_0^1 \frac{(c\tau + d)^2}{(\xi r+(1-\xi)w)\tau+\xi u+(1-\xi)z} \di \xi.
    \end{aligned}
    \end{equation*}
We abbreviate, for $\xi\in[0,1]$,
    \begin{equation*}
    Q_\xi=\xi r+(1-\xi)w,\qquad R_\xi=\xi u+(1-\xi)z,\qquad
    \phi_\xi(\tau)=\frac{(c\tau+d)^2}{Q_\xi\tau+R_\xi}.
    \end{equation*}
Note that $Q_\xi\tau+R_\xi= \xi\mathbf s(\tau)+(1-\xi) \mathbf t(\tau)$ is positive since $\mathbf s(\tau) $ and $\mathbf t(\tau)$ are in $(0,\infty)$.
A direct differentiation gives
    \begin{equation*}
    \frac{\di}{\di\tau}\phi_\xi(\tau)
    =\frac{2(c\tau+d)c(Q_\xi\tau+R_\xi)-(c\tau+d)^2Q_\xi}{(Q_\xi\tau+R_\xi)^2}
    =\frac{2(c\tau+d)c}{Q_\xi\tau+R_\xi}
    -\frac{(c\tau+d)^2Q_\xi}{(Q_\xi\tau+R_\xi)^2}.
    \end{equation*}
Differentiation again yields
    \begin{equation}
    \begin{aligned}
    \frac{\di ^2}{\di\tau^2}\phi_\xi(\tau)
    = &
    \frac{2c^2(Q_\xi\tau+R_\xi)-2cQ_\xi(c\tau+d)}{(Q_\xi\tau+R_\xi)^2}
    -\frac{2cQ_\xi(c\tau+d)(Q_\xi\tau+R_\xi)-2Q_\xi^2(c\tau+d)^2}{(Q_\xi\tau+R_\xi)^3}
    \\
    = &
    \frac{2c^2(Q_\xi\tau+R_\xi)^2-4cQ_\xi(c\tau+d)(Q_\xi\tau+R_\xi)+2Q_\xi^2(c\tau+d)^2}{(Q_\xi\tau+R_\xi)^3}
    \\
    = &
    \frac{2(c(Q_\xi\tau+R_\xi)-Q_\xi(c\tau+d))^2}{(Q_\xi\tau+R_\xi)^3}
    =
    \frac{2(R_\xi c-Q_\xi d)^2}{(Q_\xi\tau+R_\xi)^3}\ \ge\ 0,
    \end{aligned}
    \label{eq:phix-second}
    \end{equation}
    hence along the line
    \begin{equation*}
    \frac{\di ^2}{\di\tau^2}\alpha\big(\mathbf v(\tau),\mathbf s(\tau),\mathbf t(\tau)\big)
    =\int_0^1 \frac{\di ^2}{\di\tau^2}\phi_\xi(\tau)\di \xi
    =\int_0^1 \frac{2(R_\xi c-Q_\xi d)^2}{(Q_\xi\tau+R_\xi)^3}\di \xi.
    \end{equation*}
Therefore the restriction of $\alpha$ to the line is affine 
    if and only if for almost every $\xi\in[0,1]$,
    \begin{equation*}
    0= R_\xi c-Q_\xi d
    =
    (\xi u+(1-\xi)z)c
    -
    (\xi r+(1-\xi)w)d
    =
    \xi(uc-rd)
    +(1-\xi)(zc-wd),
    \end{equation*}
    which is equivalent to the pair of scalar relations
    \begin{equation}\label{eq:two-rel}
    uc=rd,\qquad
    zc=wd.
    \end{equation}
There are exactly two possibilities.

First, $c=d=0$, i.e., $\mathbf v(\tau)\equiv 0$ along the line.
Then $\alpha(\mathbf v(\tau),\mathbf s(\tau),\mathbf t(\tau))\equiv 0$. 
This yields the alternative (\romannumeral 1) in \eqref{eq:alpha-cases}.

Second, $c$ and $d$ are not both zero, 
    in which case \eqref{eq:two-rel} implies that $(r,u)$ and $(w,z)$ are both proportional to $(c,d)$.
If $c=0$ while $d\neq 0$, then $r=w=0$, 
    which is contradictory to the nondegenerate condition $(c,r,w)\neq(0,0,0)$.
Then $c\neq 0$ and we can denote $\kappa=d/c$.
Hence the whole line is contained in a ray through the origin: for $\tau\in[0,1]$,
    \begin{equation*}
    (\mathbf v(\tau),\mathbf s(\tau),\mathbf t(\tau))
    =
    \tau(c,r,w)+(d,u,z)
    =( \tau+\kappa)(c,r,w),
    \end{equation*}
    which proves the alternative (\romannumeral 2) in \eqref{eq:alpha-cases}. 

Conversely, if neither alternative holds, 
    then the integral of \eqref{eq:phix-second} is strictly positive on $(0,1)$, 
    so $\alpha$ is strictly convex along the segment. 
This establishes the equivalence cases of the affine condition.

\subsection{Proof of Lemma \ref{lem_W_rewrite}}
\label{sec_app_W_rewrite}
We aim to prove the identity
    \begin{equation*}
    ({\al W}_p^{a,b}(\mu_0,\mu_1))^2=
    \inf_{}
    \left\{
    \mathfrak E^{a,b}_{\rm Quad}((\mu_t,V_t,h_t)_{t\in[0,1]})
    \mid
    (\mu_t,V_t,h_t)_{t\in[0,1]}\in\mathcal{CE}_p(\mu_0,\mu_1;[0,1])
    \right\}.
    \end{equation*}

\noindent$\bullet$  \textbf{Step 1. The ``$\ge$'' part.}
Notice that for any $(\mu_t,\psi_t,h_t)_{t\in[0,1]}$ in $\mathrm{CE}_p(\mu_0,\mu_1;[0,1])$, 
    the triple $(\mu_t,\hat\mu_t * \nabla\psi_t,h_t)_{t\in[0,1]}$ is admissible in $\mathcal{CE}_p(\mu_0,\mu_1;[0,1])$, by the continuity equation in the form of \eqref{eq_ce_discrete_calculus}:
    \begin{equation*}
    \dot\mu_t+\nabla\cdot (\hat\mu_t *\nabla\psi_t)=h_t\,p .
    \end{equation*}
The instantaneous action in the flux formulation coincides with that in the potential formulation. If $\hat\mu_t(x,y)=0$, then
\begin{equation*}
\alpha\big((\hat\mu_t\!*\!\nabla\psi_t)(x,y),\mu_t(x),\mu_t(y)\big)=\alpha\big(0,\mu_t(x),\mu_t(y)\big)=0=\nabla\psi_t(x,y)^2\,\hat\mu_t(x,y).
\end{equation*}
If $\hat\mu_t(x,y)>0$, then $\mu_t(x),\mu_t(y)>0$ and
\begin{equation*}
\alpha\big((\hat\mu_t\!*\!\nabla\psi_t)(x,y),\mu_t(x),\mu_t(y)\big)
=\frac{((\hat\mu_t\!*\!\nabla\psi_t)(x,y))^2}{\hat\mu_t(x,y)}
=\nabla\psi_t(x,y)^2\,\hat\mu_t(x,y).
\end{equation*}
Hence
\begin{equation*}
\begin{aligned}
\frac{1}{2}\sum_{x,y\in\al X}\nabla\psi_t(x,y)^2\,\hat\mu_t(x,y)\,K(x,y)\,\varpi(x)
&=\frac{1}{2}\sum_{x,y\in\al X}\alpha\big((\hat\mu_t\!*\!\nabla\psi_t)(x,y),\mu_t(x),\mu_t(y)\big)\,K(x,y)\,\varpi(x)\\
&=A'(\mu_t,\hat\mu_t\!*\!\nabla\psi_t).
\end{aligned}
\end{equation*}
Therefore the total actions agree:
\begin{equation*}
E^{a,b}_{\rm Quad}\big((\mu_t,\psi_t,h_t)_{t\in[0,1]}\big)
=
\mathfrak E^{a,b}_{\rm Quad}\big((\mu_t,\hat\mu_t\!*\!\nabla\psi_t,h_t)_{t\in[0,1]}\big).
\end{equation*}
Thus
\begin{equation*}
\big({\al W}_p^{a,b}(\mu_0,\mu_1)\big)^2
\ge
\inf\left\{
\mathfrak E^{a,b}_{\rm Quad}\big((\mu_t,V_t,h_t)_{t\in[0,1]}\big)
\ \middle|\ 
(\mu_t,V_t,h_t)_{t\in[0,1]}\in\mathcal{CE}_p(\mu_0,\mu_1;[0,1])
\right\}.
\end{equation*}

\noindent$\bullet$  \textbf{Step 2. The ``$\le$'' part.}
Fix an arbitrary $(\mu_t,V_t,h_t)_{t\in[0,1]}\in \al{CE}_p(\mu_0,\mu_1;[0,1])$.
To overcome the regularity obstruction in recovering $\nabla\psi_t(x,y)$ from $V_t(x,y)/\hat\mu_t(x,y)$, we first build a time–regularized approximation.

\noindent$\star$ \textbf{Step 2.1. Construction of $(\mu_t^\ep,V_t^\ep,h_t^\ep)_{t\in[0,1]}$.}
For $0<\ep<\tfrac12$, define
\begin{equation}\label{eq_W_rewrite_concatenation}
(\tilde\mu_t,\tilde V_t,\tilde h_t):=
\begin{cases}
(\mu_0,0,0), & t\in[-\ep,\ep),\\
\big(\mu_{\frac{t-\ep}{1-2\ep}},\tfrac{1}{1-2\ep}V_{\frac{t-\ep}{1-2\ep}},\tfrac{1}{1-2\ep}h_{\frac{t-\ep}{1-2\ep}}\big), & t\in[\ep,1-\ep),\\
(\mu_1,0,0), & t\in[1-\ep,1+\ep].
\end{cases}
\end{equation}
Set
\begin{equation}\label{eq_W_rewrite_convolution}
(\mu_t^\ep,V_t^\ep,h_t^\ep):=\int_{-\ep}^{\ep} \eta(s)\,(\tilde\mu_{t+s},\tilde V_{t+s},\tilde h_{t+s}) \,\di s,
\end{equation}
where $\eta:\mathbb{R}\to\mathbb{R}_+$ is smooth, supported in $[-\ep,\ep]$, strictly positive on $(-\ep,\ep)$, and satisfies $\int \eta(s)\,\di s=1$.
Then $(\mu_t^\ep,V_t^\ep,h_t^\ep)_{t\in[0,1]}\in \al{CE}_p(\mu_0,\mu_1;[0,1])$ because the continuity equation holds on each subinterval in \eqref{eq_W_rewrite_concatenation} and convolution in time preserves it.

By convexity of $\alpha$ in Lemma \ref{lem_alpha_convex} and Jensen's inequality in the time variable, the convolution yields
\begin{equation}\label{ineq_W_rewrite_approx_1}
\begin{aligned}
\int_0^1 A'(\mu_t^\ep,V_t^\ep)\,\di t
&\le \int_0^1 \int_{-\ep}^{\ep} \eta(s)\,A'(\tilde \mu_{t+s},\tilde V_{t+s}) \,\di s \,\di t\\
&\le \int_{-\ep}^{\ep} \eta(s)\,\di s \int_{-\ep}^{1+\ep}A'(\tilde \mu_{t},\tilde V_{t})\,\di t\\
&= \int_{\ep}^{1-\ep} A'\!\left(\mu_{\frac{t-\ep}{1-2\ep}},\tfrac{1}{1-2\ep}V_{\frac{t-\ep}{1-2\ep}}\right)\,\di t\\
&=\frac{1}{1-2\ep}\int_0^1 A'(\mu_{t},V_{t})\,\di t.
\end{aligned}
\end{equation}
The last identity follows from the change of variables.
Similarly,
\begin{equation}\label{ineq_W_rewrite_approx_2}
\begin{aligned}
\int_0^1 |h_t^\ep|^2 \,\di t
&\le \int_0^1 \int_{-\ep}^{\ep} \eta(s)\,|\tilde h_{t+s}|^2 \,\di s \,\di t\\
&\le \int_{-\ep}^{1+\ep} |\tilde h_{t}|^2 \,\di t
=\frac{1}{1-2\ep}\int_0^1 |h_t|^2 \,\di t.
\end{aligned}
\end{equation}

\noindent$\star$ \textbf{Step 2.2. $V_t^\ep/\hat\mu_t^\ep$ is well-defined.}
Assume without loss of generality that $\int_0^1 A'(\mu_t,V_t)\,\di t<\infty$.
By definition,
\begin{equation*}
A'(\mu_t,V_t)=\frac{1}{2}\sum_{x,y\in \al X}\alpha\big(V_t(x,y),\mu_t(x),\mu_t(y)\big)\,K(x,y)\varpi(x),
\end{equation*}
and $\alpha\big(V_t(x,y),\mu_t(x),\mu_t(y)\big)\ge 0$ with equality $+\infty$ if and only if $V_t(x,y)\ne 0$ while $\hat\mu_t(x,y)=0$.
Hence finiteness of $\int_0^1 A'(\mu_t,V_t)\,\di t$ implies that for all $x,y$ with $K(x,y)\ne 0$ the set
\begin{equation}\label{eq_W_rewrite_zero_1}
\{\,t\in[0,1]\mid \hat \mu_t(x,y)=0 \text{ and } V_t(x,y)\ne 0 \,\}
\text{ is negligible.}
\end{equation}

Suppose for some $x,y$ with $K(x,y)\ne 0$ the set
\begin{equation*}
\{\,t\in[0,1]\mid \hat \mu_t^\ep(x,y)=\theta(\mu_t^\ep(x),\mu_t^\ep(y))=0 \text{ and } V_t^\ep(x,y)\ne 0 \,\}
\end{equation*}
is nonempty and fix $t$ in it.
By the definition of $\theta$ in \eqref{eq_logarithmic_mean} this means
\begin{equation*}
\mu_t^\ep(x)=0 \text{ or } \mu_t^\ep(y)=0 \quad \text{and} \quad V_t^\ep(x,y)\ne 0.
\end{equation*}
We may assume $\mu_t^\ep(x)=0$.
From \eqref{eq_W_rewrite_convolution} and the nonnegativity of $\mu$ we obtain
\begin{equation*}
\tilde\mu_s(x)=0 \text{ for all } s\in[t-\ep,t+\ep],
\end{equation*}
and from $V_t^\ep(x,y)\ne 0$ we obtain the existence of a subset $I_t\subset[t-\ep,t+\ep]$ of positive measure such that
\begin{equation*}
\tilde V_s(x,y)\ne 0 \text{ for all } s\in I_t.
\end{equation*}
In particular,
\begin{equation}\label{eq_W_rewrite_zero_2}
\tilde\mu_s(x)=0 \text{ and } \tilde V_s(x,y)\ne 0 \text{ for all } s\in I_t.
\end{equation}
By \eqref{eq_W_rewrite_concatenation} we have $\tilde V_s(x,y)=0$ on $[-\ep,\ep)\cup[1-\ep,1+\ep]$, hence $I_t\subset[\ep,1-\ep)$.
Define the bijection $\varsigma(s):=(s-\ep)/(1-2\ep)$.
Then $\varsigma(I_t)$ has positive measure and
\begin{equation*}
\varsigma(I_t)\subset \varsigma([\ep,1-\ep))=[0,1).
\end{equation*}
By \eqref{eq_W_rewrite_concatenation} and \eqref{eq_W_rewrite_zero_2}, for all $s$ in a non-negligible set $\varsigma(I_t)\subset[0,1]$,
\begin{equation*}
\mu_s(x)=0 \quad \text{and} \quad V_s(x,y)\ne 0,
\end{equation*}
which contradicts \eqref{eq_W_rewrite_zero_1}.
Therefore for any $(x,y)$ with $K(x,y)>0$,
\begin{equation}\label{eq_W_rewrite_zero_3}
\{\,t\in[0,1]\mid \hat \mu_t^\ep(x,y)=0 \text{ and } V_t^\ep(x,y)\ne 0 \,\}=\emptyset.
\end{equation}

Define the time measurable function $\Psi_t^\ep(x,y)$ by
\begin{equation*}
\Psi_t^\ep(x,y):=
\begin{cases}
\dfrac{V^\ep_t(x,y)}{\hat \mu_t^\ep(x,y)}, & \text{if } K(x,y)>0 \text{ and } \hat \mu_t^\ep(x,y)>0,\\
0, & \text{otherwise}.
\end{cases}
\end{equation*}
By \eqref{eq_W_rewrite_zero_3} and the definition of $\Psi_t^\ep$, for any $(x,y)$ with $K(x,y)>0$,
\begin{equation*}
V_t^\ep(x,y)=\hat \mu_t^\ep(x,y)\,\Psi_t^\ep(x,y).
\end{equation*}
Using reversibility \eqref{eq_markov_reversible}, whenever $K(x,y)>0$ we also have $K(y,x)>0$, hence
\begin{equation}\label{eq_Psi=V_1}
V_t^\ep(x,y)-V_t^\ep(y,x)=\hat\mu_t^\ep(x,y)\,\Psi_t^\ep(x,y)-\hat \mu_t^\ep(y,x)\,\Psi_t^\ep(y,x)
=(\Psi_t^\ep(x,y)-\Psi_t^\ep(y,x)) \hat\mu_t^\ep(x,y).
\end{equation}
Finally,
\begin{equation}\label{eq_Psi=V_2}
\Psi_t^\ep(x,y)^2\,\hat \mu_t^\ep(x,y)=\alpha\big(V_t^\ep(x,y),\mu_t^\ep(x),\mu_t^\ep(y)\big).
\end{equation}

\noindent$\star$ \textbf{Step 2.3. $\psi^\ep_t$ recovered by projection.}
Recall that $\mathscr{G}_{\mu_t^\ep}$ consists of equivalence classes of functions in $\mathbb{R}^{\al X\times \al X}$ that agree at every pair $(x,y)$ with ${\hat\mu_t^\ep}(x,y)K(x,y)>0$.
Equipped with $\langle\cdot,\cdot\rangle_{\mu_t^\ep}$, the space $\mathscr{G}_{\mu_t^\ep}$ is a Hilbert space.
The gradient operator $\nabla:\mathrm L^2(\al X,\varpi)\to \mathscr{G}_{\mu_t^\ep}$ is a well defined linear map induced by the quotient, and its negative adjoint is $\nabla_{\mu_t^\ep}:\mathscr{G}_{\mu_t^\ep}\to \mathrm L^2(\al X,\varpi)$.
We have the orthogonal decomposition
\begin{equation}\label{eq_orthogonal_decomposition_gmu}
\mathscr{G}_{\mu_t^\ep}=\mathrm{Ran}(\nabla)\oplus^{\perp}\mathrm{Ker}(\nabla_{\mu_t^\ep}\cdot).
\end{equation}
Let $\mathscr P_{\mu_t^\ep}$ be the orthogonal projection in $\mathscr{G}_{\mu_t^\ep}$ onto $\mathrm{Ran}(\nabla)$.
There exists a measurable function $\psi^\ep_t:[0,1]\to \mathbb{R}^{\al X}$ such that $\mathscr P_{\mu_t^\ep}\Psi_t^\ep=\nabla \psi_t^\ep$.

Using \eqref{eq_Psi=V_1},
\begin{equation*}
\begin{aligned}
\nabla \cdot V_t^\ep(x)
&=\frac{1}{2}\sum_{y\in \al X}\big(V_t^\ep(x,y)-V_t^\ep(y,x)\big)K(x,y)\\
&=\frac{1}{2}\sum_{y\in \al X}\big(\Psi_t^\ep(x,y)-\Psi_t^\ep(y,x)\big)\hat \mu_t^\ep(x,y)K(x,y)
=\nabla_{\mu_t^\ep}\cdot \Psi_t^\ep .
\end{aligned}
\end{equation*}
By \eqref{eq_orthogonal_decomposition_gmu},
\begin{equation*}
\nabla \cdot V_t^\ep
=\nabla_{\mu_t^\ep}\cdot \Psi_t^\ep
=\nabla_{\mu_t^\ep}\cdot\big(\Psi_t^\ep-\mathscr P_{\mu_t^\ep}\Psi_t^\ep\big)
+\nabla_{\mu_t^\ep}\cdot\big(\mathscr P_{\mu_t^\ep}\Psi_t^\ep\big)
=\nabla_{\mu_t^\ep}\cdot \nabla \psi_t^\ep
=\nabla\cdot\big(\hat \mu_t^\ep * \nabla \psi_t^\ep\big).
\end{equation*}
Hence the potential continuity equation \eqref{eq_ce_discrete_calculus} holds:
\begin{equation*}
\dot \mu^\ep_t+\nabla_{\mu_t^\ep}\cdot \nabla \psi_t^\ep
=\dot \mu^\ep_t+\nabla \cdot V_t^\ep
=h_t^\ep\,p ,
\end{equation*}
so $(\mu^\ep_t,\psi^\ep_t,h^\ep_t)_{t\in[0,1]}\in \mathrm{CE}_p(\mu_0,\mu_1;[0,1])$.

Using \eqref{eq_orthogonal_decomposition_gmu} and \eqref{eq_Psi=V_2},
\begin{equation*}
\begin{aligned}
\langle \nabla\psi_t^\ep,\nabla\psi_t^\ep\rangle_{\mu_t^\ep}
&=\langle \mathscr P_{\mu_t^\ep}\Psi_t^\ep,\mathscr P_{\mu_t^\ep}\Psi_t^\ep\rangle_{\mu_t^\ep}
\le \langle \Psi_t^\ep,\Psi_t^\ep\rangle_{\mu_t^\ep}\\
&=\frac{1}{2}\sum_{x,y\in\al X}\Psi_t^\ep(x,y)^2\,\hat \mu_t^\ep(x,y)\,K(x,y)\,\varpi(x)
\\
&=\frac{1}{2}\sum_{x,y\in\al X}\alpha\big(V_t^\ep(x,y),\mu_t^\ep(x),\mu_t^\ep(y)\big)\,K(x,y)\,\varpi(x)
=A'(\mu_t^\ep,V_t^\ep).
\end{aligned}
\end{equation*}
Combining this with \eqref{ineq_W_rewrite_approx_1} and \eqref{ineq_W_rewrite_approx_2} yields,
\begin{equation*}
\begin{aligned}
E_{\rm Quad}^{a,b}\big((\mu^\ep_t,\psi^\ep_t,h^\ep_t)_{t\in[0,1]}\big)
&=a^2\int_0^1 |h_t^\ep|^2\,\di t
+b^2\int_0^1 \langle \nabla\psi_t^\ep,\nabla\psi_t^\ep\rangle_{\mu_t^\ep}\,\di t\\
&\le a^2\int_0^1 |h_t^\ep|^2\,\di t
+b^2\int_0^1 A'(\mu_t^\ep,V_t^\ep)\,\di t\\
&\le \frac{1}{1-2\ep}\left(a^2\int_0^1 |h_t|^2\,\di t
+b^2\int_0^1 A'(\mu_t,V_t)\,\di t\right)\\
&=\frac{1}{1-2\ep}\,\mathfrak E_{\rm Quad}^{a,b}\big((\mu_t,V_t,h_t)_{t\in[0,1]}\big).
\end{aligned}
\end{equation*}
By the arbitrariness of $\ep$ and $(\mu_t,V_t,h_t)_{t\in[0,1]}$,
\begin{equation*}
\big({\al W}_p^{a,b}(\mu_0,\mu_1)\big)^2
\le
\inf\left\{
\mathfrak E^{a,b}_{\rm Quad}\big((\mu_t,V_t,h_t)_{t\in[0,1]}\big)
\ \middle|\ 
(\mu_t,V_t,h_t)_{t\in[0,1]}\in\mathcal{CE}_p(\mu_0,\mu_1;[0,1])
\right\}.
\end{equation*}

\subsection{Proof of Theorem \ref{thm_minimizer}.}
\label{sec_app_minimizer}

The proof follows the approach in \cite[Theorem 3.2]{erbar2012ricci}.

\noindent$\bullet$ \textbf{Step 1. Minimizing sequence and uniform bounds.}
For $n=1,2,...$,
    fix a minimizing sequence $(\mu^n_t,V^n_t,h^n_t)_{t\in[0,1]}$ such that
    \begin{equation*}
    \lim_{n\to\infty} \mathfrak E^{a,b}_{\rm Quad}((\mu_t^n,V_t^n,h_t^n)_{t\in[0,1]})
    =
    {\al W}_p^{a,b}(\mu_0,\mu_1)^2.
    \end{equation*}
Without loss of generality, assume $V_t^n(x,y)=0$ whenever $K(x,y)=0$.
For some $C>0$ and all $n$,
    \begin{equation}
    \label{eq_L2_boundedness_minimizing_action_parts}
    \int_0^1 | h^n_t |^2 \,\mathrm dt\le C,
    \qquad
    \int_0^1 \frac 1 2 \sum_{x,y\in\mathcal X}\alpha\!\big(V^n_t(x,y),\mu^n_t(x),\mu^n_t(y)\big) K(x,y)\varpi(x)\,\mathrm dt\le C .
    \end{equation}
From the continuity equation, there is a uniform upper bound of mass:
\begin{equation*}
\max_{t\in[0,1]}[\mu^n_t,\varpi]
\le [\mu_0,\varpi]+\int_0^1 |h_t^n|\,\mathrm dt
\le [\mu_0,\varpi]+\sqrt{C} .
\end{equation*}
In particular, for all $t,n$ and $x$,
\begin{equation*}
0\le \mu^n_t(x)\le \frac{[\mu_0,\varpi]+\sqrt{C}}{\min\!\varpi} .
\end{equation*}
By the monotonicity of the logarithmic mean,
\begin{equation*}
0\le \hat\mu^n_t(x,y)\le C'
\quad\text{for all }x,y,t,n,
\qquad
C':=\max\Bigl\{\theta(s,r): s,r\in \Bigl[0,\frac{[\mu_0,\varpi]+\sqrt{C}}{\min\!\varpi}\Bigr]\Bigr\}.
\end{equation*}

\noindent$\bullet$ \textbf{Step 2. Passing to weak$^*$ limits of time–measures.}
Define signed Borel measures on $[0,1]$ by
\begin{equation*}
\mathrm d\eta^n(t):=h^n_t\,\mathrm dt,
\qquad
\mathrm d\rho^n_{x,y}(t):=V^n_t(x,y)\,\mathrm dt
\quad\text{for }x,y\in\al X .
\end{equation*}
Let $\lambda(\cdot)$ denote Lebesgue measure on $[0,1]$.
For every Borel set $S\subset[0,1]$, the total variation of $\eta^n$ on $S$ satisfies
    \begin{equation}\label{ineq_eta_leb}
    \|\eta^n\|(S)=\int_S |h^n_t|\,\mathrm dt
    \le \sqrt{\lambda(S)}\left(\int_0^1 |h^n_t|^2\,\mathrm dt\right)^{1/2}
    \le \sqrt{C\,\lambda(S)}.
    \end{equation}
By the upper bound \eqref{eq_L2_boundedness_minimizing_action_parts}, 
    we know that $\alpha(V^n_t(x,y),\mu^n_t(x),\mu^n_t(y))$ is finite for almost every $(x,y)$ with $K(x,y)>0$.
Hence, by the definition of $\alpha$ in \eqref{def_alpha}, we obtain for almost every $t$,
    \begin{equation*}
    |V^n_t(x,y)|
    = \sqrt{\alpha(V^n_t(x,y),\mu^n_t(x),\mu^n_t(y))}\,\sqrt{\hat\mu^n_t(x,y)}
    \le \sqrt{C'}\,\sqrt{\alpha(V^n_t(x,y),\mu^n_t(x),\mu^n_t(y))},
    \end{equation*}
    and this inequality also holds when $K(x,y)=0$, due to our assumption that $V^n_t(x,y)$ vanishes in this case.
By Cauchy--Schwarz inequality in $t$,
    \begin{equation}
    \label{ineq_rho_leb}
    \begin{aligned}
    \sum_{x,y\in\al X} K(x,y)\varpi(x)\,\|\rho^n_{x,y}\|(S)
    &=\sum_{x,y} K(x,y)\varpi(x)\int_S |V^n_t(x,y)|\,\mathrm dt \\
    &\le \sqrt{C'}\int_S \Bigl(\sum_{x,y} K(x,y)\varpi(x)\,\alpha\big(V^n_t(x,y),\mu^n_t(x),\mu^n_t(y)\big)\Bigr)^{\!1/2}\,\mathrm dt\\
    &\le \sqrt{2C'\,\lambda(S)}\left(\int_0^1 \frac 1 2 \sum_{x,y} \alpha\big(V^n_t(x,y),\mu^n_t(x),\mu^n_t(y)\big)K(x,y)\varpi(x)\,\mathrm dt\right)^{\!1/2}\\
    &\le \sqrt{2CC'\,\lambda(S)} .
    \end{aligned}
    \end{equation}
In particular, taking $S=[0,1]$ shows that 
    each total variation $\|\eta^n\|([0,1])$ and $\|\rho^n_{x,y}\|([0,1])$ is uniformly bounded in $n$.
By Banach--Alaoglu on the dual of $\mathrm C([0,1];\mathbb{R})$, 
    there exists a subsequence, not relabeled, and finite signed measures $\eta$ and $\rho_{x,y}$ such that
    \begin{equation*}
    \eta^n \stackrel{*}{\rightharpoonup}\eta,
    \qquad
    \rho^n_{x,y}\stackrel{*}{\rightharpoonup}\rho_{x,y}
    \quad\text{in the weak$^*$ sense of measures on }[0,1] .
    \end{equation*}
\eqref{ineq_eta_leb} and \eqref{ineq_rho_leb} also shows that $(\eta^n)_n$ and $(\rho^n_{x,y})_n$ are uniformly absolutely continuous with respect to $\lambda$.
Hence $\eta$ and each $\rho_{x,y}$ are absolutely continuous with respect to $\lambda$.
There exist $h:[0,1]\to \mathbb{R}$ and $V:[0,1]\to\mathbb R^{\al X\times\al X}$ with
\begin{equation*}
\mathrm d\eta(t)=h_t\,\mathrm dt,
\qquad
\mathrm d\rho_{x,y}(t)=V_t(x,y)\,\mathrm dt .
\end{equation*}

\noindent$\bullet$ \textbf{Step 3. Defining the limit $\mu$ via the continuity identity.}
For each $x\in\al X$ and $s\in[0,1]$ the continuity equation in \eqref{new_ce} gives
\begin{equation}\label{eq_new_ce_integral}
\begin{aligned}
\mu_s^n(x)-\mu_0(x)
= &\ \frac{1}{2}\int_0^s \sum_{y\in \al X}\!\big(V_t^n(y,x)-V_t^n(x,y)\big)K(x,y)\,\mathrm dt \;+\; p(x)\int_0^s h_t^n\,\mathrm dt
\\
= &\ \frac{1}{2}\sum_{y\in \al X} K(x,y)\left( \int_0^s \di \rho^n_{x,y}
-\int_0^s \di \rho^n_{y,x}
\right) + p(x)\int_0^s  \di \eta^n.
\end{aligned}
\end{equation}
By weak$^*$ convergence of $\rho^n_{x,y}$ and $\eta^n$
and a standard application of the Portmanteau Theorem \cite[Theorem 2.1]{billingsley2013convergence}, the right-hand side of \eqref{eq_new_ce_integral} converges for every fixed $s$.
Define $\mu_s(x)$ by the limit
\begin{equation*}
\mu_s(x):=\mu_0(x)+\frac{1}{2}\int_0^s \sum_{y\in \al X}\!\big(V_t(y,x)-V_t(x,y)\big)K(x,y)\,\mathrm dt \;+\; p(x)\int_0^s h_t\,\mathrm dt .
\end{equation*}
Then $\mu_s^n(x)\to \mu_s(x)$ for all $s$ and $x$.
Since $\mu^n_t(x)$ are uniformly bounded in $t,n$, dominated convergence yields $\mu^n(\cdot,x)\to \mu(\cdot,x)$ in $\mathrm L^1([0,1];\mathbb{R})$ for each $x$, hence the measures $\mu^n_t(x)\,\mathrm dt$ converge weak$^*$ to $\mu_t(x)\,\mathrm dt$.
By construction $(\mu_t,V_t,h_t)_{t\in[0,1]}$ satisfies the continuity equation in \eqref{new_ce} and belongs to $\mathcal{CE}_p(\mu_0,\mu_1;[0,1])$ with endpoints $\mu_0$ and $\mu_1$.
\vspace{0.2cm}

\noindent$\bullet$ \textbf{Step 4. Lower semicontinuity of the action and optimality.}
By Lemma \ref{lem_alpha_convex}, $\alpha(v,s,r)$ is convex and lower semicontinuous.
Viewing $A'(\mu_t,V_t)$ as a finite sum of $\alpha$, 
    we invoke the general lower semicontinuity theorem for convex integral functionals on measures \cite[Theorem 3.4.3]{buttazzo1989semicontinuity} to conclude
\begin{equation*}
\int_0^1 A'(\mu_t,V_t)\,\mathrm dt
\le \liminf_{n\to\infty}\int_0^1 A'(\mu^n_t,V^n_t)\,\mathrm dt .
\end{equation*}
For the source part, the same convex-integral lower semicontinuity yields
\begin{equation*}
\int_0^1 |h_t|^2\,\mathrm dt\le \liminf_{n\to\infty}\int_0^1 |h_t^n|^2\,\mathrm dt .
\end{equation*}
Combining these estimates with \eqref{eq_L2_boundedness_minimizing_action_parts} gives
\begin{equation*}
a^2\int_0^1 | h_t |^2 \,\mathrm dt
+
b^2 \int_0^1 A'(\mu_t,V_t )\,\mathrm dt
\le
\liminf_{n\to\infty}\left\{
a^2\int_0^1 | h_t^n |^2 \,\mathrm dt
+
b^2 \int_0^1 A'(\mu_t^n,V^n_t )\,\mathrm dt
\right\}.
\end{equation*}
Since the right-hand side equals $\big(\al W_p^{a,b}(\mu_0,\mu_1)\big)^2$ by minimality of the sequence, the left-hand side attains the infimum.
Hence $(\mu_t,V_t,h_t)_{t\in[0,1]}$ is a minimizer for \eqref{eq_Wpab_with_V}.

\subsection{Proof of Lemma \ref{lem_asymmetric}}
\label{sec_app_f}

First we have for a measurable $\Psi\in\mathbb{R}^{\al X\times \al X}$,
    \begin{equation*}
    \nabla\cdot \check{\Psi}(x)= \frac{1}{2}\sum_{y\in \al X} (\Psi(x,y)-\Psi(y,x))K(x,y)=\nabla\cdot \Psi(x).
    \end{equation*}
For any $\nu\in \al M$ and $x,y\in \al X$,
    \begin{enumerate}
    \item if $\theta(\nu(x),\nu(y))=0$ and $\Psi(x,y)\neq0$ or $\Psi(y,x)\neq0$, then
        \begin{equation*}
        \begin{aligned}
        & \alpha(\check\Psi(x,y),\nu(x),\nu(y)) K(x,y)\varpi(x)+\alpha(\check\Psi(y,x),\nu(y),\nu(x)) K(y,x)\varpi(y)
        \\
        &\qquad\le\alpha(\Psi(x,y),\nu(x),\nu(y)) K(x,y)\varpi(x)+\alpha(\Psi(y,x),\nu(y),\nu(x)) K(y,x)\varpi(y)= \infty;
        \end{aligned}
        \end{equation*}
    \item if $\theta(\nu(x),\nu(y))=0$ and $\Psi(x,y)=\Psi(y,x)=0$, then
        \begin{equation*}
        \begin{aligned}
        & \alpha(\check\Psi(x,y),\nu(x),\nu(y)) K(x,y)\varpi(x)+\alpha(\check\Psi(y,x),\nu(y),\nu(x)) K(y,x)\varpi(y)
        \\
        &\qquad=\alpha(\Psi(x,y),\nu(x),\nu(y)) K(x,y)\varpi(x)+\alpha(\Psi(y,x),\nu(y),\nu(x)) K(y,x)\varpi(y)= 0;
        \end{aligned}
        \end{equation*}
    \item if $\theta(\nu(x),\nu(y))\neq0$, then
        \begin{equation*}
        \begin{aligned}
        & \alpha(\check\Psi(x,y),\nu(x),\nu(y)) K(x,y)\varpi(x)+\alpha(\check\Psi(y,x),\nu(y),\nu(x)) K(y,x)\varpi(y)
        \\
        &\qquad= 
        \frac{\check\Psi(x,y)^2}{\hat{\nu}(x,y)}K(x,y)\varpi(x)+\frac{\check\Psi(y,x)^2}{\hat{\nu}(y,x)}K(y,x)\varpi(y)
        \\
        &\qquad=
        \frac{\check\Psi(x,y)^2+\check\Psi(y,x)^2}{\hat{\nu}(x,y)}K(x,y)\varpi(x)
        \\
        &\qquad\le
        \frac{\Psi(x,y)^2+\Psi(y,x)^2}{\hat{\nu}(x,y)}K(x,y)\varpi(x)
        \\
        &\qquad=\alpha(\Psi(x,y),\nu(x),\nu(y)) K(x,y)\varpi(x)+\alpha(\Psi(y,x),\nu(y),\nu(x)) K(y,x)\varpi(y).
        \end{aligned}
        \end{equation*}
    \end{enumerate}
Thus
    \begin{equation*}
    \begin{aligned}
    A'(\nu,\check\Psi) = \frac 1 2 \sum_{x,y\in \al X}\alpha(\check\Psi(x,y),\nu(x),\nu(y)) K(x,y)\varpi(x)
    \le A'(\nu,\Psi).
    \end{aligned}
    \end{equation*}

\subsection{Proof of Theorem \ref{thm:GE-weak}}
\label{sec_app_GE_weak}

We split our proof into two parts.
\vspace{0.2cm}

\noindent\textbf{$\ast$ Part A. Necessity: geodesic (i) $\Rightarrow$ weak geodesic system (ii).}
Let $\mu_t:[0,1]\to \al M_+$ be a constant–speed $\al W^{a,b}_p$–geodesic and, for a.e.\ $t$, let
    \begin{equation*}
    (\nabla\psi_t,h_t):=D_{\mu_t}(\dot\mu_t),
    \end{equation*}
    where the operator $D_{\mu_t}$ is given in Lemma \ref{lem_ce_solver_combined}.
Define the corresponding flux as
    \begin{equation*}
    V_t(x,y)=\theta\big(\mu_t(x),\mu_t(y)\big)\,\nabla\psi_t(x,y),
    \qquad\text{for a.e.\ }t.
    \end{equation*}
Then the action reads
    \begin{equation*}
    \begin{aligned}
    E^{a,b}_{\rm Quad}((\mu_t,\psi_t,h_t)_{t\in[0,1]})
    =&
    a^2\!\int_0^1 h_t^2\di t
    +
    \frac{b^2}{2}\!
    \int_0^1\sum_{x,y\in \al X}\!
    \theta\big(\mu_t(x),\mu_t(y)\big)\,
    \nabla\psi_t(x,y)^2\,K(x,y)\,\varpi(x)\di t
    \\
    =&
    a^2\!\int_0^1 h_t^2\di t
    +
    \frac{b^2}{2}\!
    \int_0^1\sum_{x,y\in \al X}\!
    \frac{V_t(x,y)^2}{\theta\big(\mu_t(x),\mu_t(y)\big)}\,
    K(x,y)\,\varpi(x)\di t
    \\
    = & \
    \mathfrak{E}^{a,b}_{\rm Quad}((\mu_t,V_t,h_t)_{t\in[0,1]}).
    \end{aligned}
    \end{equation*}
{{Next, we verify weak geodesic systems \eqref{eq:GE-h-weak} and \eqref{eq:GE-psi-weak} step by step. }}

\medskip
\noindent$\bullet$ {\textbf{Step A.1. Source variation $\Rightarrow$ \eqref{eq:GE-h-weak}.}}
Fix $\zeta\in \mathrm C_{\mathrm c}^1((0,1);\mathbb{R})$ and set
    \begin{equation*}
    f_t:=-\dot\zeta(t),\qquad F_t:=\int_0^t f_s\di s=-\zeta(t),
    \end{equation*}
    so that $\zeta(0)=\zeta(1)=0$.
For $\varepsilon\in\mathbb{R}$ small, define the source–only perturbation
    \begin{equation*}
    \mu_t^\varepsilon:=\mu_t+\varepsilon F_t\,p,\qquad h_t^\varepsilon:=h_t+\varepsilon f_t.
    \end{equation*}
Then $(\mu^\varepsilon_t,V_t,h^\varepsilon_t)_{t\in[0,1]}$ is 
    an admissible triplet in $\mathcal{CE}_p(\mu_0,\mu_1;[0,1])$ 
    for $\ep$ small enough to keep $\mu^\ep$ in $\al M_+$.
By optimality of $(\mu_t,V_t,h_t)_{t\in[0,1]}$,
    \begin{equation*}
    \begin{aligned}
    0=&
    \left. \frac{\di}{\di\varepsilon} \right|_{\varepsilon=0}
    \mathfrak{E}^{a,b}_{\rm Quad} ((\mu^\varepsilon_t,V_t,h^\varepsilon_t)_{t\in[0,1]})
    \\
    =&
    \left.\frac{\di}{\di\ep}\right|_{\ep=0}
    \left(
    a^2 \int_0^1  (h_t+\ep f_t)^2 \di t
    +
    \frac{b^2}{2} \int_0^1 \sum_{x,y}
    \frac{V_t(x,y)^2 K(x,y)\varpi(x)}
    {\theta\big(\mu_t(x)+\ep F_t p(x),\mu_t(y)+\ep F_t p(y)\big)}
    \di t
    \right)
    \\
    =&
    2a^2 \int_0^1 h_t f_t \di t
    \\
    &-\frac{b^2}{2} \int_0^1 \sum_{x,y}
    \frac{V_t(x,y)^2 K(x,y)\varpi(x)}
    {\theta\big(\mu_t(x),\mu_t(y)\big)^2}
    \big( \partial_1 \theta(\mu_t(x),\mu_t(y)) p(x) 
    + \partial_2 \theta(\mu_t(x),\mu_t(y)) p(y) \big)
     F_t \di t.
    \end{aligned}
    \end{equation*}
Using reversibility $K(x,y)\varpi(x)=K(y,x)\varpi(y)$, 
    the symmetry $V_t(x,y)^2=V_t(y,x)^2$, 
    and $\partial_1\theta(s,t)=\partial_2\theta(t,s)$, 
    pairing $(x,y)$ with $(y,x)$ in the weighted edge sum $\sum_{x,y}(\cdot)\,K(x,y)\varpi(x)$ shows that 
    the combined contribution $\partial_1\theta(\mu_t(x),\mu_t(y))\,p(x)+\partial_2\theta(\mu_t(x),\mu_t(y))\,p(y)$ equals $2\,\partial_1\theta(\mu_t(x),\mu_t(y))\,p(x)$.
Thus
    \begin{equation*}
    \begin{aligned}
    0
    =&
    2a^2 \int_0^1 h_t f_t \di t
    -b^2 \int_0^1 \sum_{x,y}
    \frac{V_t(x,y)^2 K(x,y)\varpi(x)}
    {\theta\big(\mu_t(x),\mu_t(y)\big)^2}
    \partial_1 \theta(\mu_t(x),\mu_t(y)) p(x) 
    F_t \di t
    \\
    =&
    2a^2 \int_0^1 h_t f_t \di t
    -b^2 \int_0^1 \sum_{x,y}
    \nabla\psi_t(x,y)^2
    \partial_1\theta\big(\mu_t(x),\mu_t(y)\big)
    K(x,y) \varpi(x) p(x)
    F_t \di t.    
    \end{aligned}
    \end{equation*}
Using $f=-\dot\zeta$ and $F=-\zeta$, we obtain
    \begin{equation*}
    \int_0^1 h_t\,\dot\zeta(t)\di t
    =
    \frac{b^2}{2a^2}\int_0^1\!
    \Bigg(
        \sum_{x,y}
        \nabla\psi_t(x,y)^2\,
        \partial_1\theta\big(\mu_t(x),\mu_t(y)\big)
        \,K(x,y)\,\varpi(x)\,p(x)
    \Bigg)\,\zeta(t)\di t,
    \end{equation*}
    which is exactly \eqref{eq:GE-h-weak}.

\medskip
\noindent$\bullet$ {\textbf{Step A.2. Transport variation $\Rightarrow$ \eqref{eq:GE-psi-weak}.}}
Fix an ordered pair $(x,y)$ with $K(x,y)>0$ and $\zeta\in \mathrm C_{\mathrm c}^1((0,1);\mathbb{R})$. 
Define the mean–zero node vector for $u\in \al X$
    \begin{equation*}
    \xi^{x,y}(u):=\frac{\mathbbm{1}_{\{u=y\}}}{\varpi(y)}-\frac{\mathbbm{1}_{\{u=x\}}}{\varpi(x)}.
    \end{equation*}
Here $\mathbbm{1}_S$ stands for the characteristic function of set $S$. Then
    \begin{equation*}
    \langle \xi^{x,y},\mathbf{1} \rangle_\varpi=0, \ \ 
    \langle \phi,\xi^{x,y} \rangle_\varpi=\phi(y)-\phi(x)=\nabla \phi(x,y).
    \end{equation*}
There exists a time-independent edge field 
    $U^{x,y}\in\mathbb{R}^{\al X\times \al X}$ 
    with $\nabla\!\cdot U^{x,y}=\xi^{x,y}$.
For example, one can fix any $\nu\in \al M_+$ and, by Lemma \ref{lem_ce_solver_combined}, 
    pick $\eta^{x,y}\in\mathbb{R}^{\al X}$ such that 
    $\nabla\!\cdot(\hat\nu\ast\nabla\!\eta^{x,y})
    =\nabla_{\!\nu}\!\cdot\nabla\!\eta^{x,y}=\xi^{x,y}$, 
    then define $U^{x,y}:=\hat\nu\ast\nabla\!\eta^{x,y}$.
Set
    \begin{equation*}
    \Omega_t:=\zeta(t)\,\xi^{x,y},\quad W_t:=\dot\zeta(t)\,U^{x,y},\quad\text{and we have }\quad  \nabla\!\cdot W_t=\dot\zeta(t)\,\xi^{x,y}=\dot\Omega_t,\quad \Omega_0=\Omega_1=0.
    \end{equation*}
For $\delta\in\mathbb{R}$ small, define the coupled transport perturbation
    \begin{equation*}
    \mu_t^\delta:=\mu_t-\delta\,\Omega_t,\qquad 
    V_t^\delta:=V_t+\delta\,W_t,
    \end{equation*}
    so that $\dot\mu_t^\delta=-\nabla\!\cdot V_t^\delta+h_t p$,
    $\mu_t^\delta\in \al M_+$
    and the endpoints are fixed; 
    hence $(\mu^\delta_t,V^\delta_t,h_t)_{t\in[0,1]}$ is admissible 
    in $\mathcal{CE}_p(\mu_0,\mu_1;[0,1])$.
Optimality of $(\mu_t,V_t,h_t)_{t\in[0,1]}$ yields
    \begin{align*}
    0=&
    \left.\frac{\di}{\di \delta}\right|_{\delta=0}
    \mathfrak{E}^{a,b}_{\rm Quad}((\mu^\delta_t,V^\delta_t,h_t)_{t\in[0,1]})
    \\
    =&
    \left.\frac{\di}{\di\delta}\right|_{\delta=0}
    \left(
    a^2 \int_0^1  h_t^2 \di t
    +
    \frac{b^2}{2} \int_0^1 \sum_{u,v}
    \frac{(V_t(u,v)+\delta W_t(u,v))^2 K(u,v)\varpi(u)}
    {\theta\big(\mu_t(u)-\delta \Omega_t(u),\mu_t(v)-\delta \Omega_t(v)\big)}
    \di t
    \right)
    \\
    =&
    b^2\!\int_0^1\!\sum_{u,v} 
    \frac{V_t(u,v)W_t(u,v)\,K(u,v)\varpi(u)}
    {\theta\big(\mu_t(u),\mu_t(v)\big)}\di t
    \\
    &+
    b^2\!\int_0^1\!
    \sum_{u,v} \frac{V_t(u,v)^2\,K(u,v)\varpi(u)}
    {\theta\big(\mu_t(u),\mu_t(v)\big)^2}
    \partial_1\theta(\mu_t(u),\mu_t(v))\,\Omega_t(u)
    \di t
    \\
    =&
    b^2\!\int_0^1\!\sum_{u,v} 
    \nabla\psi_t(u,v)\,W_t(u,v)\,K(u,v)\varpi(u)\di t
    \\
    &+
    b^2\!\int_0^1\!\sum_{u,v}\!
    \partial_1\theta(\mu_t(u),\mu_t(v))\,\Omega_t(u)\,
    \nabla\psi_t(u,v)^2 K(u,v)\varpi(u)\di t.
    \end{align*}
Apply the discrete integration by parts formula \eqref{eq_int_by_parts}
    and $\langle \phi,\xi^{x,y} \rangle_\varpi=\nabla \phi(x,y)$:
    \begin{equation*}
    \begin{aligned}
    \int_0^1\!\sum_{u,v}\nabla\psi_t(u,v)\,W_t(u,v)\,K(u,v)\varpi(u)\di t
    = &
    2\!\int_0^1 \langle \nabla\psi_t, W_t\rangle_\varpi\di t
    = 
    -2\!\int_0^1 \langle \psi_t, \nabla \!\cdot\! W_t\rangle_\varpi\di t
    \\
    = &
    -2\!\int_0^1 \langle \psi_t, \xi^{x,y}\rangle_\varpi \dot\zeta(t)\di t
    = 
    -2\!\int_0^1  \nabla \psi_t(x,y)\dot\zeta(t) \di t.
    \end{aligned}
    \end{equation*}
Therefore,
    \begin{align*}
    \int_0^1  \nabla \psi_t(x,y)\dot\zeta(t) \di t
    = &
    \,\frac{1}{2}
    \!\int_0^1\!\sum_{u,v}\!
        \partial_1\theta(\mu_t(u),\mu_t(v))\,\Omega_t(u)\,
        \nabla\psi_t(u,v)^2 K(u,v)\varpi(u)
    \di t
    \\
    = &
    \,\frac{1}{2}
    \!\int_0^1
        \zeta(t)
        \!\sum_{u,v}\!
        \partial_1\theta(\mu_t(u),\mu_t(v))
        \nabla\psi_t(u,v)^2 K(u,v)\xi^{x,y}(u)\varpi(u)
    \di t
    \\
    = &
    \,\frac{1}{2}
    \!\int_0^1
        \zeta(t)
        \left \langle
        \!\sum_{v}\!
        \partial_1\theta(\mu_t(\cdot),\mu_t(v))
        \nabla\psi_t(\cdot,v)^2 K(\cdot,v)
        ,
        \xi^{x,y}
        \right \rangle_{\varpi}
    \di t
    \\
    = &
    \frac{1}{2}
    \int_0^1
    \left(
    \sum_{z}
    \nabla\psi_t(z,y)^2
    K(y,z)
    \partial_1 \theta(\mu_t(y),\mu_t(z))
    \right.
    \\
    &\qquad \qquad-
    \left.
    \sum_{z}
    \nabla\psi_t(z,x)^2
    K(x,z)
    \partial_1 \theta(\mu_t(x),\mu_t(z))
    \right)
    \zeta(t)
    \di t,
    \end{align*}
    which is precisely \eqref{eq:GE-psi-weak} for the chosen $(x,y)$. 
Since $(x,y)$ and $\zeta$ are arbitrary, 
    \eqref{eq:GE-psi-weak} holds for all edges with $K(x,y)>0$ and all $\zeta\in \mathrm C_{\mathrm c}^1((0,1);\mathbb{R})$.

\medskip
\noindent
\noindent\textbf{$\ast$ Part B. Sufficiency: weak geodesic system (ii) $\Rightarrow$ geodesic (i).}
Assume $\mu_t:[0,1]\to \al M_+$ is absolutely continuous, 
    and $(\nabla\psi_t,h_t):=D_{\mu_t}(\dot\mu_t)$ satisfies the weak geodesic system \eqref{eq:GE-h-weak}–\eqref{eq:GE-psi-weak}. 
Define for a.e.\ $t$ the flux
    \begin{equation*}
    V_t(x,y):=\theta\big(\mu_t(x),\mu_t(y)\big)\,\nabla\psi_t(x,y).
    \end{equation*}
We claim that $(\mu_t,V_t,h_t)_{t\in[0,1]}$ is a global minimizer of the action $\mathfrak{E}^{a,b}_{\rm Quad}$ among all feasible triples,
    hence by Corollary \ref{cor_geodesic} $\mu$ is a constant–speed $\al W^{a,b}_p$–geodesic.
{{Next, we verify our claim in the following five steps. }}

\medskip
\noindent$\bullet$ {\textbf{Step B.1. Feasible perturbations form a linear space and split as source $\oplus$ transport.}}
Consider the ambient Banach space
    \begin{equation*}
    \mathbb{X}
    := \mathrm W^{1,1}\big([0,1];\mathbb{R}^{\al X}\big) \times \mathrm L^1\big([0,1];\mathbb{R}^{\al X\times \al X}\big) \times \mathrm L^1([0,1];\mathbb{R}),
    \end{equation*}
    with the product norm. 
$\mathrm W^{1,1}([0,1];\mathbb{R}^{\al X})$ is the Sobolev space of functions whose weak derivative belongs to $\mathrm L^1$. 
It coincides with $\mathrm{A\!C}([0,1];\mathbb{R}^{\al X})$, and the weak derivative agrees almost everywhere with the classical derivative.
Define the continuous linear operator
    \begin{equation*}
    \mathcal{L}: \mathbb{X}\to \mathrm L^1\big([0,1];\mathbb{R}^{\al X}\big),\qquad
    \mathcal{L}(\mu_t,V_t,h_t)_{t\in[0,1]}:=\dot\mu+\nabla\!\cdot V - h\,p,
    \end{equation*}
    and the endpoint evaluation maps $E_0(\mu):=\mu(0)$, $E_1(\mu):=\mu(1)$.
Let
    \begin{equation*}
    \mathcal{A}:=\Bigl\{(\mu_t,V_t,h_t)_{t\in[0,1]}\in\mathbb{X}:\ \mathcal{L}(\mu_t,V_t,h_t)_{t\in[0,1]}=0,\ \ E_0(\mu)=\mu(0),\ E_1(\mu)=\mu(1)\Bigr\}
    \end{equation*}
    be the affine subspace of the equality constraints, and let
    \begin{equation*}
    \mathcal{K}:=\Bigl\{(\mu_t,V_t,h_t)_{t\in[0,1]}\in\mathbb{X}:\ \mu_t(x)\ge 0\ \text{for a.e.\ }t\in(0,1),\ \forall~ x\in \al X\Bigr\}
    \end{equation*}
    be the closed convex cone encoding nonnegativity.
Then the feasible set can be characterized as
    \begin{equation*}
    \mathcal{CE}_p\big(\mu(0),\mu(1);[0,1]\big)\ =\ \mathcal{A}\ \cap\ \mathcal{K}.
    \end{equation*}
Since $\mathcal{A}$ and $\mathcal{K}$ are closed and convex in $\mathbb{X}$, 
    their intersection $\mathcal{CE}_p\big(\mu(0),\mu(1);[0,1]\big)$ is also a closed convex subset of $\mathbb{X}$.

Denote the \emph{relative interior} with respect to $\mathcal{A}$ of the feasible set as
    \begin{equation*}
    \operatorname{ri}_{\mathcal{A}}\big(\mathcal{A}\cap\mathcal{K}\big)
    :=
    \Bigl\{ 
        z\in \mathcal{A}\cap\mathcal{K} 
        \mid 
        \exists~ r>0,B_{\mathcal{A}}(z,r)\subset \mathcal{A}\cap\mathcal{K}
    \Bigr\},
    \end{equation*}
    where $B_{\mathcal{A}}(z,r)$ denotes the open ball of radius $r$ in the induced norm on the affine subspace $\mathcal{A}$.
And denote the tangent space of the affine constraints as
    \begin{equation*}
    \mathcal{T}:=
    \left\{
        (\delta\mu_t,\delta V_t,\delta h_t)_{t\in[0,1]}
        \left | \
        \begin{aligned}
            &\delta\mu_t\in \mathrm{W}^{1,1}_0([0,1];\mathbb{R}^{\al X}), \
            \delta V_t\in \mathrm{L^1([0,1];\mathbb{R}^{\al X\times \al X})}, \
            \delta h_t \in \mathrm{L^1([0,1];\mathbb{R})}
           \vspace{8pt} \\
            &\dot{\delta\mu}_t=-\nabla\!\cdot\delta V_t+\delta h_t p\ \text{ a.e.}, \
            \delta\mu(0)=\delta\mu(1)=\boldsymbol{0}\in \mathbb{R}^{\al X}
        \end{aligned}
        \right.
    \right\}.
    \end{equation*}
Here $\mathrm W^{1,1}_0([0,1];\mathbb{R}^{\al X})$ is the subspace of $\mathrm W^{1,1}([0,1];\mathbb{R}^{\al X})$ consisting of functions taking zero at $t=0$ and $t=1$.

Since $\mu_t\in \al M_+$ for all $t\in[0,1]$, 
    there exists $\alpha>0$ with $\inf_{t,x}\mu_t(x)\ge\alpha$, hence $(\mu_t,V_t,h_t)_{t\in[0,1]}\in\operatorname{ri}_{\mathcal A}(\mathcal{A}\cap\mathcal{K} )$.
Whence the feasible directions are exactly the linearized constraint directions $\mathcal T$.
Equivalently, $(\delta\mu_t,\delta V_t,\delta h_t)_{t\in[0,1]}\in\mathcal T$ iff
    \begin{equation*}
    (\mu_t,V_t,h_t)_{t\in[0,1]}+s(\delta\mu_t,\delta V_t,\delta h_t)_{t\in[0,1]}\in\mathcal{A}\cap\mathcal{K},
    \quad
    \text{for all sufficiently small } s>0,
    \end{equation*}
    and for every $(\nu_t,W_t,g_t)_{t\in[0,1]}\in\mathcal{A}\cap\mathcal{K} $ one has $(\nu_t,W_t,g_t)_{t\in[0,1]}-(\mu_t,V_t,h_t)_{t\in[0,1]}\in\mathcal T$.

All $(\delta\mu_t,\delta V_t,\delta h_t)_{t\in[0,1]}\in \mathcal{T}$ admits the following decomposition,
    \begin{equation*}
    (\delta\mu_t,\delta V_t,\delta h_t)_{t\in[0,1]}=
    \left(\int_0^t\delta h_t \di t \ p,0,\delta h\right)
    +
    \left(\delta\mu- \int_0^t\delta h_t \di t \ p,\delta V,0\right),
    \end{equation*}
    whence $\mathcal{T}$ is generated by two basic families:
    \begin{itemize}
        \item[(S)] (\emph{source directions}) 
            given $f\in \mathrm L^1([0,1];\mathbb{R})$ with $\int_0^1 f_t \di t=0$, set $F(t):=\int_0^t f_s \di s$ and
                \begin{equation*}
                \delta\mu_t=F(t)\,p,\qquad \delta h_t=f_t,\qquad \delta V_t=0.
                \end{equation*}
        \item[(T)] (\emph{transport directions}) 
            given $\Omega_t\in \mathrm{W}^{1,1}_0([0,1];\mathbb{R}^{\al X})$ 
                with $\langle\Omega_t,\mathbf{1}\rangle_\varpi=0$ and $\Omega_0=\Omega_1=0$, 
                pick $W_t$ such that
                $\nabla\!\cdot W_t=\dot\Omega_t$ and set
                    \begin{equation*}
                    \delta\mu_t=-\,\Omega_t,\qquad \delta V_t=W_t,\qquad \delta h_t=0.
                    \end{equation*}
    \end{itemize}

\medskip
\noindent$\bullet$ {\textbf{Step B.2. First variation vanishes along (S) by \eqref{eq:GE-h-weak}.}}
Consider a source variation $(\mu^\varepsilon_t,V^\varepsilon_t,h^\varepsilon_t)_{t\in[0,1]}$ with
    \begin{equation*}
    \mu^\varepsilon_t=\mu_t+\varepsilon F(t) p,\qquad V^\varepsilon_t=V_t,\qquad h^\varepsilon_t=h_t+\varepsilon f_t,
    \end{equation*}
    where $f=-\dot\zeta$ and $F=-\zeta$ for some $\zeta\in \mathrm C_{\mathrm c}^1((0,1);\mathbb{R})$.
Differentiating at $\varepsilon=0$ and using $V_t=\hat\mu_t\ast\nabla\psi_t$ gives
    \begin{align*}
    &\left.\frac{\di}{\di\varepsilon}\right|_{\ep=0}\mathfrak{E}^{a,b}_{\rm Quad}((\mu^\varepsilon_t,V^\varepsilon_t,h^\varepsilon_t)_{t\in[0,1]})
    \\
    &\qquad\qquad=2a^2\!\int_0^1 h_t f_t\di t
    +b^2\!\int_0^1\!\sum_{x,y}\partial_1\theta\big(\mu_t(x),\mu_t(y)\big)\,F_t p(x)\,\nabla\psi_t(x,y)^2\,K(x,y)\,\varpi(x)\di t.
    \end{align*}
With $f=-\dot\zeta$ and $F=\zeta$, 
    the weak equation \eqref{eq:GE-h-weak} gives that the right-hand side is $0$.
Since $\mathrm C_{\mathrm c}^1((0,1);\mathbb{R})$ is dense in $\mathrm W^{1,1}_0([0,1];\mathbb{R})$, 
    the first variation vanishes along every (S)-direction.

\medskip
\noindent$\bullet$ {\textbf{Step B.3. First variation vanishes along (T) by \eqref{eq:GE-psi-weak}.}}
Consider a transport variation $(\mu^\delta_t,V^\delta_t,h^\delta_t)_{t\in[0,1]}$ with
    \begin{equation*}
    \mu^\delta_t=\mu_t-\delta\,\Omega_t,\qquad V^\delta_t=V_t+\delta W_t,\qquad h_t^\delta=h_t,
    \end{equation*}
    where $\Omega_t$ and $W_t$ are as in (T). 
Differentiating at $\delta=0$ yields
    \begin{equation}
    \label{eq_thm_geodesic_equation_1}
    \begin{aligned}
    \frac{1}{b^2}
    \left.\frac{\di}{\di\delta}\right|_{\delta=0}
    \mathfrak{E}^{a,b}_{\rm Quad}((\mu^\delta_t,V^\delta_t,h^\delta_t)_{t\in[0,1]})
    =&
    \!\int_0^1\!\sum_{x,y}
    \nabla\psi_t(x,y)\,W_t(x,y)\,K(x,y)\,\varpi(x)
    \di t
    \\
    &+
    \!\int_0^1\!\sum_{x,y}
    \nabla\psi_t(x,y)^2\,
    \partial_1\theta\big(\mu_t(x),\mu_t(y)\big)\,
    K(x,y)\,
    \Omega_t(x)\,\varpi(x)\di t.
    \end{aligned}
    \end{equation}
By the discrete integration by parts in space 
    and $\nabla\!\cdot W_t=\dot\Omega_t$,
    \begin{align*}
    \int_0^1\!\sum_{x,y}\nabla\psi_t(x,y)\,W_t(x,y)\,K(x,y)\,\varpi(x)\di t
    &=2\!\int_0^1 \langle \nabla \psi_t,W_t\rangle_\varpi\di t\\
    &=-2\!\int_0^1 \langle \psi_t,\nabla\!\cdot W_t\rangle_\varpi\di t
    = - 2\!\int_0^1 \langle \psi_t,\dot\Omega_t\rangle_\varpi\di t.
    \end{align*}
Since $\langle\dot\Omega_t,\mathbf{1}\rangle_\varpi=\langle \Omega_t,\mathbf{1}\rangle_\varpi=0$, take $u\in \al X$
    \begin{equation*}
    - 2\!\int_0^1 \langle \psi_t,\dot\Omega_t\rangle_\varpi\di t
    =-2
    \sum_x
    \int_0^1 (\psi(x)-\psi(u))\dot\Omega_t(x) \varpi (x)\di t
    =2
    \sum_x
    \int_0^1 \nabla\psi(x,u)\dot\Omega_t(x) \varpi (x)\di t.
    \end{equation*}
Apply \eqref{eq:GE-psi-weak} with a sequence $\zeta_n\in \mathrm C_{\mathrm c}^1((0,1);\mathbb{R})$ such that $\zeta_n$ converges to $\big(t\mapsto \Omega_t(x)\varpi(x)\big)$ in $\mathrm{W}^{1,1}_0([0,1];\mathbb{R})$. 
By density, passing to the limit shows that the following identity holds for every admissible $\Omega$:
    \begin{align*}
    2\sum_x
    \int_0^1 \nabla\psi(x,u)\dot\Omega_t(x) \varpi (x)\di t
    =&
    \sum_x\int_0^1
    \left(
     \sum_{z}
    \nabla\psi(z,u)^2
    K(u,z)
    \partial_1 \theta(\mu_t(u),\mu_t(z))
    \right.
    \\
    & \left. -
    \sum_{z}
    \nabla\psi(z,x)^2
    K(x,z)
    \partial_1 \theta(\mu_t(x),\mu_t(z))\right)
    \Omega_t(x) \varpi (x)
    \di t
    \\
    =&
    \int_0^1
    \sum_{z}
    \nabla\psi(z,u)^2
    K(u,z)
    \partial_1 \theta(\mu_t(u),\mu_t(z))
    \sum_x \Omega_t(x) \varpi (x) \di t
    \\
    &-
    \int_0^1
    \sum_{x,z}
    \nabla\psi(z,x)^2
    K(x,z)
    \partial_1 \theta(\mu_t(x),\mu_t(z))
    \Omega_t(x) \varpi (x)
    \di t
    \\
    =&-
    \int_0^1
    \sum_{x,z}
    \nabla\psi(z,x)^2
    K(x,z)
    \partial_1 \theta(\mu_t(x),\mu_t(z))
    \Omega_t(x) \varpi (x)
    \di t.
    \end{align*}
Hence the two terms in the first variation in \eqref{eq_thm_geodesic_equation_1} cancel and
$\left.\frac{\di}{\di\delta}\right|_{\delta=0}\mathfrak{E}^{a,b}_{\rm Quad}((\mu^\delta_t,V^\delta_t,h^\delta_t)_{t\in[0,1]})=0$.
Since the first variation vanishes along all directions in $\mathcal{T}$, 
    $(\mu_t,V_t,h_t)_{t\in[0,1]}$ is a stationary point of $\mathfrak{E}^{a,b}_{\rm Quad}$ on $\mathcal{CE}_p(\mu_0,\mu_1;[0,1])$.

\medskip
\noindent$\bullet$ {\textbf{Step B.4. Convexity $\Rightarrow$ stationarity implies minimality.}}
The admissible set $\mathcal{CE}_p(\mu_0,\mu_1;[0,1])$ is convex and contained in an affine set $\al A$.
The integrand $a^2 h^2$ is strictly convex in $h$, 
    and by Lemma \ref{lem_alpha_convex}, the transport integrand $\alpha$
    is jointly convex on $\mathbb{R}\times(0,\infty)^2$.
Therefore $\mathfrak{E}^{a,b}_{\rm Quad}$ is 
    convex on $\mathcal{CE}_p(\mu_0,\mu_1;[0,1])$.
A stationary point of a convex functional on an affine set is a global minimizer; 
    thus $(\mu_t,V_t,h_t)_{t\in[0,1]}$ minimizes $\mathfrak{E}^{a,b}_{\rm Quad}$ among all feasible triples.

\medskip
\noindent$\bullet$ {\textbf{Step B.5. From minimality to constant speed geodesic.}}
By Corollary \ref{cor_geodesic}, action minimizers induce constant speed with respect to the metric $\al W_p^{a,b}$.
Hence $\mu$ is a constant–speed $\al W^{a,b}_p$–geodesic.

\medskip
Combining the steps proves the sufficiency.

 \section{Proofs of Step 3.3 in Theorem \ref{thm_duality} }\label{App-B-3.3}
Assume that $c=0$. 
We claim
    \begin{equation}
    \label{claim_step3.3}
    g_b^\ast(0,\sigma,V)=\sup_{(\phi,\Phi)\in \mathcal D^{b}}
    \left\{
    \int_0^1
    \big(
    \langle \dot\phi_t,\sigma_t\rangle_\varpi
    +
    \langle \Phi_t,V_t\rangle_\varpi
    \big)\di t
    \right\}\ge \frac{b^2}{2}\int_0^1 A'(\sigma_t,V_t)\di t=:\text{RHS}-\eqref{eq_Gb_Legendre_transform}. 
    \end{equation}
When $\int_0^1 A'(\sigma_t,V_t)\di t=\infty$, the claim becomes
    $ g_b^\ast(0,\sigma,V)=+\infty $.
We split the proof into four parts and deal with the case $\int_0^1 A'(\sigma_t,V_t)\di t<\infty$ in the first three steps.
\newline

 \noindent$\circ$ \textbf{Part 1. Construction of $(\phi^{\delta,\ep},\Phi^{\delta,\ep})\in \al D^{b}$.}
We now construct near-optimal test pairs in $\mathcal D^b$ to obtain the matching lower bound.
For $0<\ep<\tfrac12$, define
    \begin{equation*}
    (\tilde\sigma_t^\ep,\tilde V_t^\ep):=
    \begin{cases}
    (0,0), & t\in[-\ep,0),\\
    \big(\sigma_t,V_t\big), & t\in[0,1),\\
    (0,0), & t\in[1,1+\ep].
    \end{cases}
    \end{equation*}
Then we set
    \begin{equation}
    \label{ineq_HJ_duality_convolution}
    (\sigma_t^\ep,V_t^\ep):=\int_{-\ep}^{\ep} \zeta^\ep(s)\,(\tilde\sigma_{t+s}^\ep,\tilde V_{t+s}^\ep) \di s,
    \end{equation}
    where $\zeta^\ep:\mathbb{R}\to\mathbb{R}_+$ is smooth, supported in $[-\ep,\ep]$, strictly positive on $(-\ep,\ep)$, and satisfies $\int \zeta^\ep(s)\,\di s=1$.
The convolution yields the convergences
    \begin{equation*}
    \sigma^\ep\to \sigma \quad \text{uniformly on }[0,1],
    \qquad
    V^\ep\to V \quad \text{in }\mathrm L^2([0,1];\mathscr G_{\mathbf 1}),
    \end{equation*}
    as $\ep\to 0$.
In particular, after passing to a subsequence, $V_t^\ep\to V_t$ holds for a.e.\ $t\in(0,1)$.
Fix $\delta>0$ and set 
    \begin{equation*}
    \sigma_t^{\delta,\ep}:=\sigma_t^\ep+\delta\mathbf 1.
    \end{equation*}
The role of the time convolution is to meet the regularity requirements of $\bb E$,
and the role of adding $\delta\mathbf 1$ is to ensure strict positivity so that $\theta$ is differentiable and $\hat\sigma$ is bounded away from zero.
    
By convexity of the action and the monotonicity of the mean $\theta$, one has
    \begin{equation}
    \label{ineq_HJ_duality_2}
    \int_0^1 A'(\sigma_t^{\delta,\ep},V_t^\ep)\di t
    \le
    \int_0^1 A'(\sigma_t^\ep,V_t^\ep)\di t
    \le
    \int_0^1 A'(\sigma_t,V_t)\di t.
    \end{equation}
Since $A'$ is convex and lower semicontinuous by Lemma \ref{lem_alpha_convex}, 
    the same holds for the time integral:
    \begin{equation*}
    \int_0^1 A'(\sigma_t,V_t)\di t \le \varliminf_{\delta\to 0}\varliminf_{\ep\to 0}\int_0^1 A'(\sigma_t^{\delta,\ep},V_t^\ep)\di t.
    \end{equation*}
See \cite[Theorem 3.4.3]{buttazzo1989semicontinuity} for a general lower semicontinuity theorem for convex integral functionals.
Combining with \eqref{ineq_HJ_duality_2},
    \begin{equation*}
    \int_0^1 A'(\sigma_t,V_t)\di t
    =
    \lim_{\delta\to 0}\lim_{\ep\to 0}\int_0^1 A'(\sigma_t^{\delta,\ep},V_t^\ep)\di t.
    \end{equation*}

For each $(\delta,\ep)$ we define the near-optimal pair $(\phi^{\delta,\ep},\Phi^{\delta,\ep})$ by
    \begin{equation*}
    \Phi_t^{\delta,\ep}(x,y)
    :=
    b^2\frac{V_t^\ep(x,y)}{\hat\sigma_t^{\delta,\ep}(x,y)},
    \qquad
    \dot\phi_t^{\delta,\ep}(x)
    =
    -\frac{1}{2b^2}\sum_{y\in \al X}
    \partial_1\theta(\sigma_t^{\delta,\ep}(x),\sigma_t^{\delta,\ep}(y))
    \Phi_t^{\delta,\ep}(x,y)^2
    K(x,y),
    \end{equation*}
    and choosing $\phi_0^{\delta,\ep}=0$.
Since $\sigma_t^{\delta,\ep}(x)\ge \delta>0$,
    we have $\hat\sigma_t^{\delta,\ep}(x,y)>0$
    and $\theta$ differentiable at $(\sigma_t^{\delta,\ep}(x),\sigma_t^{\delta,\ep}(y))$,
    hence $\Phi_t^{\delta,\ep}$ and $\dot\phi_t^{\delta,\ep}(x)$ is well-defined.
Moreover, for fixed $\delta>0$ we have $\sigma_t^{\delta,\ep}\to \sigma_t^\delta:=\sigma_t+\delta\mathbf 1$ and $V_t^\ep\to V_t$
    for a.e.\ $t$ along a subsequence, 
    and hence we define
    \begin{equation}
    \label{eq_Phi_delta_eps_for_infty}
    \dot\phi_t^{\delta}(x)
    =
    -\frac{1}{2b^2}\sum_{y\in \al X}
    \partial_1\theta(\sigma_t^\delta(x),\sigma_t^\delta(y))
    \Phi_t^{\delta}(x,y)^2
    K(x,y),
    \qquad
    \Phi_t^{\delta}(x,y)=b^2\frac{V_t(x,y)}{\hat\sigma_t^\delta(x,y)}.
    \end{equation}
We shall use the inequality
    \begin{equation}
    \label{ineq_HJ_Euler}
    \partial_1\theta(s,t)u+\partial_2\theta(s,t)v\ge \theta(u,v),
    \qquad
    u,v\ge 0,
    \quad
    s,t>0,
    \end{equation}
    which follows from the concavity of $\theta$ and its $1$-homogeneity.
Equality holds in \eqref{ineq_HJ_Euler} when $(u,v)=(s,t)$.

We claim that $(\phi^{\delta,\ep},\Phi^{\delta,\ep})\in \al D^{b}$.
For any $\mu\in \al M$ we obtain
    \begin{equation*}
    \label{ineq_HJ_duality_1}
    \begin{aligned}
    \langle \dot\phi_t^{\delta,\ep},\mu\rangle_\varpi
    &=
    -\frac{1}{2b^2}\sum_{x,y\in \al X}
    \partial_1\theta(\sigma_t^{\delta,\ep}(x),\sigma_t^{\delta,\ep}(y))
    \Phi_t^{\delta,\ep}(x,y)^2
    K(x,y)\mu(x)\varpi(x)
    \\
    &=
    -\frac{1}{4b^2}\sum_{x,y\in \al X}
    \Big(
    \partial_1\theta(\sigma_t^{\delta,\ep}(x),\sigma_t^{\delta,\ep}(y))\mu(x)
    +
    \partial_2\theta(\sigma_t^{\delta,\ep}(x),\sigma_t^{\delta,\ep}(y))\mu(y)
    \Big)
    \Phi_t^{\delta,\ep}(x,y)^2
    K(x,y)\varpi(x)
    \\
    &\le
    -\frac{1}{4b^2}\sum_{x,y\in \al X}
    \theta(\mu(x),\mu(y))
    \Phi_t^{\delta,\ep}(x,y)^2
    K(x,y)\varpi(x)
    =
    -\frac{1}{2b^2}\|\Phi_t^{\delta,\ep}\|_\mu^2,
    \end{aligned}
    \end{equation*}
    which yields the desired membership $(\phi^{\delta,\ep},\Phi^{\delta,\ep})\in \al D^{b}$.
    \vspace{0.2cm}

\noindent$\circ$ \textbf{Part 2. Contribution of $\langle \dot\phi,\sigma\rangle_\varpi$.}
Assuming $\int_0^1 A'(\sigma_t,V_t) \di t<\infty$,
    we next identify the contribution of the $\langle \dot\phi,\sigma\rangle_\varpi$ term.
The strategy is:
    for each fixed $\delta>0$ we first send $\ep\to0$,
    using that $\sigma^{\delta,\ep}$ stays uniformly away from zero and that $V^\ep\to V$ in $\mathrm L^2$;
    we then send $\delta\downarrow0$;
    the resulting limit reproduces the factor $-(b^2/2)\int_0^1 A'(\sigma_t,V_t)\di t$,
    up to a nonnegative remainder term that vanishes after integration in time.
The goal is to prove
    \begin{equation}
    \label{eq_dowble_swap_int}
    \lim_{\delta\to 0}\lim_{\ep\to 0}\int_0^1 \langle \dot\phi_t^{\delta,\ep},\sigma_t\rangle_\varpi\di t
    =
    -\frac{b^2}{2}\int_0^1 A'(\sigma_t,V_t) \di t
    .
    \end{equation}
    
Fix $\delta>0$.
We first prove that
    \begin{equation}
    \label{eq_swap_eps_int}
    \lim_{\ep\to 0}\int_0^1 \langle \dot\phi_t^{\delta,\ep},\sigma_t\rangle_\varpi \di t
    =
    \int_0^1 \lim_{\ep\to 0}\langle \dot\phi_t^{\delta,\ep},\sigma_t\rangle_\varpi \di t
    =
    \int_0^1 \langle \dot\phi_t^{\delta},\sigma_t\rangle_\varpi \di t.
    \end{equation}
For a.e.\ $t\in(0,1)$ we write
\begin{equation*}
\langle \dot\phi_t^{\delta,\ep},\sigma_t\rangle_\varpi
=
-\frac{b^2}{2}\sum_{x,y\in \al X}
c_{t,x,y}^{\delta,\ep}\,
V_t^\ep(x,y)^2\,
K(x,y)\varpi(x),
\end{equation*}
where
\begin{equation*}
c_{t,x,y}^{\delta,\ep}
:=
\frac{\partial_1\theta(\sigma_t^{\delta,\ep}(x),\sigma_t^{\delta,\ep}(y))\,\sigma_t(x)}
{\hat\sigma_t^{\delta,\ep}(x,y)^2}.
\end{equation*}
Let $\max\sigma:=\max_{t\in[0,1],x\in\al X}\sigma_t(x)$.
Since $\sigma_t^{\delta,\ep}(x)\in[\delta,\max\sigma+\delta]$ and $\hat\sigma_t^{\delta,\ep}(x,y)\ge \delta$,
and since $(s,t)\mapsto \partial_1\theta(s,t)$ is continuous on $(0,\infty)^2$,
there exists a constant $C_\delta>0$ such that
\begin{equation*}
|c_{t,x,y}^{\delta,\ep}|\le C_\delta
\qquad
\text{for all }x,y\in \al X,\ \text{for all }\ep\in(0,\tfrac12),\ \text{for a.e.\ }t\in(0,1).
\end{equation*}
Moreover, by the convolution construction we have $\sigma^\ep\to\sigma$ uniformly on $[0,1]$.
Hence $\sigma^{\delta,\ep}\to\sigma^\delta$ uniformly on $[0,1]$.
Therefore, for every fixed pair $(x,y)$,
\begin{equation*}
c_{t,x,y}^{\delta,\ep}\to c_{t,x,y}^{\delta}
\qquad
\text{for a.e.\ }t\in(0,1),
\end{equation*}
where
\begin{equation*}
c_{t,x,y}^{\delta}
:=
\frac{\partial_1\theta(\sigma_t^{\delta}(x),\sigma_t^{\delta}(y))\,\sigma_t(x)}
{\hat\sigma_t^{\delta}(x,y)^2}.
\end{equation*}
Next, since $V^\ep\to V$ in $\mathrm L^2([0,1];\mathscr G_{\mathbf 1})$,
the finite-dimensionality of $\mathscr G_{\mathbf 1}$ implies that for each $(x,y)$ with $K(x,y)>0$,
\begin{equation*}
V^\ep(\cdot,x,y)\to V(\cdot,x,y)
\quad
\text{in }\mathrm L^2,
\end{equation*}
and hence
\begin{equation*}
V^\ep(\cdot,x,y)^2\to V(\cdot,x,y)^2
\quad
\text{in }\mathrm L^1.
\end{equation*}
We claim that for each fixed $(x,y)$,
\begin{equation*}
c_{t,x,y}^{\delta,\ep}\,V_t^\ep(x,y)^2
\to
c_{t,x,y}^{\delta}\,V_t(x,y)^2
\quad
\text{in }\mathrm L^1.
\end{equation*}
Indeed, we decompose
\begin{equation*}
c_{t,x,y}^{\delta,\ep}\,V_t^\ep(x,y)^2
-
c_{t,x,y}^{\delta}\,V_t(x,y)^2
=
c_{t,x,y}^{\delta,\ep}\bigl(V_t^\ep(x,y)^2-V_t(x,y)^2\bigr)
+
\bigl(c_{t,x,y}^{\delta,\ep}-c_{t,x,y}^{\delta}\bigr)V_t(x,y)^2.
\end{equation*}
The first term converges to zero in $\mathrm L^1$ since
$|c_{t,x,y}^{\delta,\ep}|\le C_\delta$ and
$V^\ep(\cdot,x,y)^2\to V(\cdot,x,y)^2$ in $\mathrm L^1$.
For the second term, we use $c_{t,x,y}^{\delta,\ep}\to c_{t,x,y}^{\delta}$ a.e.\ and the bound
\begin{equation*}
\bigl| \bigl(c_{t,x,y}^{\delta,\ep}-c_{t,x,y}^{\delta}\bigr)V_t(x,y)^2 \bigr|
\le
2C_\delta V_t(x,y)^2,
\end{equation*}
where $V(\cdot,x,y)^2\in \mathrm L^1$ since $V(\cdot,x,y)\in \mathrm L^2$.
Thus dominated convergence yields $\mathrm L^1$ convergence of the second term as well.
Since $\al X$ is finite, summing over $(x,y)$ preserves $\mathrm L^1$ convergence.
Consequently,
\begin{equation*}
\langle \dot\phi^{\delta,\ep},\sigma\rangle_\varpi
\to
\langle \dot\phi^{\delta},\sigma\rangle_\varpi
\quad
\text{in }\mathrm L^1,
\end{equation*}
and therefore
\begin{equation*}
\lim_{\ep\to 0}\int_0^1 \langle \dot\phi_t^{\delta,\ep},\sigma_t\rangle_\varpi \di t
=
\int_0^1 \langle \dot\phi_t^{\delta},\sigma_t\rangle_\varpi \di t.
\end{equation*}
This proves \eqref{eq_swap_eps_int}.

We now prove the identity
    \begin{equation}
    \label{eq_target_identity_delta}
    \lim_{\delta\to 0}\int_0^1 \langle \dot\phi_t^{\delta},\sigma_t\rangle_\varpi \di t
    =
    -\frac{b^2}{2}\int_0^1 A'(\sigma_t,V_t)\di t.
    \end{equation}
We begin with a symmetrized identity.
For any $\mu\in \al M$ and a.e.\ $t$, reversibility of $\varpi$ yields
    \begin{align}
    \label{eq_symmetrized_pairing}
    \langle \dot\phi_t^{\delta},\mu\rangle_\varpi
    =
    -\frac{1}{4b^2}\sum_{x,y\in \al X}
    \Big(
    \partial_1\theta(\sigma_t^\delta(x),\sigma_t^\delta(y))\mu(x)
    +
    \partial_2\theta(\sigma_t^\delta(x),\sigma_t^\delta(y))\mu(y)
    \Big)
    \Phi_t^{\delta}(x,y)^2
    K(x,y)\varpi(x).
    \end{align}
We apply \eqref{eq_symmetrized_pairing} with $\mu=\sigma_t$.
Since $\sigma_t=\sigma_t^\delta-\delta\mathbf 1$,
    we use \eqref{ineq_HJ_Euler} in the identity case
    to obtain, for each pair $(x,y)$,
    \begin{align*}
    &\partial_1\theta(\sigma_t^\delta(x),\sigma_t^\delta(y))\sigma_t(x)
    +
    \partial_2\theta(\sigma_t^\delta(x),\sigma_t^\delta(y))\sigma_t(y)
    \\
    &\qquad
    =
    \hat\sigma_t^\delta(x,y)
    -
    \delta\Big(
    \partial_1\theta(\sigma_t^\delta(x),\sigma_t^\delta(y))
    +
    \partial_2\theta(\sigma_t^\delta(x),\sigma_t^\delta(y))
    \Big).
    \end{align*}
Substituting this into \eqref{eq_symmetrized_pairing} gives the decomposition
\begin{equation}
\label{eq_decomposition_pairing}
\langle \dot\phi_t^{\delta},\sigma_t\rangle_\varpi
=
\langle \dot\phi_t^{\delta},\sigma_t^\delta\rangle_\varpi
+
R_t^\delta,
\end{equation}
where the remainder term is
\begin{equation}
\label{eq_remainder_R}
R_t^\delta
=
\frac{\delta}{4b^2}\sum_{x,y\in \al X}
\Big(
\partial_1\theta(\sigma_t^\delta(x),\sigma_t^\delta(y))
+
\partial_2\theta(\sigma_t^\delta(x),\sigma_t^\delta(y))
\Big)
\Phi_t^{\delta}(x,y)^2
K(x,y)\varpi(x).
\end{equation}
Choosing $\mu=\sigma_t^\delta$ in \eqref{eq_symmetrized_pairing} yields the identity
\begin{equation}
\label{eq_identity_sigma_delta}
\langle \dot\phi_t^{\delta},\sigma_t^\delta\rangle_\varpi
=
-\frac{1}{4b^2}\sum_{x,y\in \al X}\hat\sigma_t^\delta(x,y)\Phi_t^\delta(x,y)^2K(x,y)\varpi(x)
=
-\frac{b^2}{2}A'(\sigma_t^\delta,V_t).
\end{equation}

We now justify that the remainder term in \eqref{eq_remainder_R} vanishes as $\delta\to 0$.
We work at times $t$ such that $A'(\sigma_t,V_t)<\infty$.
This holds for almost every $t\in (0,1)$ since $A'(\sigma_t,V_t)$ is integrable in time.
Fix such a time $t$.
For each ordered pair $(x,y)$ with $K(x,y)>0$, the finiteness of $A'(\sigma_t,V_t)$ implies that
    $V_t(x,y)=0$ whenever $\hat\sigma_t(x,y)=\theta(\sigma_t(x),\sigma_t(y))=0$.
Since $\theta(u,v)=0$ if and only if $u=0$ or $v=0$, this means that
    if $V_t(x,y)\ne 0$ then necessarily $\sigma_t(x)>0$ and $\sigma_t(y)>0$.
We rewrite the summand in \eqref{eq_remainder_R} as
    \begin{equation*}
    \frac{b^2}{4}
    \frac{\delta\bigl(\partial_1\theta(\sigma_t^\delta(x),\sigma_t^\delta(y))+\partial_2\theta(\sigma_t^\delta(x),\sigma_t^\delta(y))\bigr)}
    {\hat\sigma_t^\delta(x,y)^2}
    V_t(x,y)^2
    K(x,y)\varpi(x).
    \end{equation*}
If $V_t(x,y)=0$, then this term is identically zero for all $\delta>0$.
Assume now that $V_t(x,y)\ne 0$.
Then $\sigma_t(x)>0$ and $\sigma_t(y)>0,$ and therefore
    $\hat\sigma_t^\delta(x,y)\to \hat\sigma_t(x,y)>0$ as $\delta\to 0$.
Moreover, the map $(s,t)\mapsto \partial_1\theta(s,t)+\partial_2\theta(s,t)$ is continuous on $(0,\infty)^2$,
    so the quantity $\partial_1\theta(\sigma_t^\delta(x),\sigma_t^\delta(y))+\partial_2\theta(\sigma_t^\delta(x),\sigma_t^\delta(y))$
    remains bounded as $\delta\to 0$.
Consequently,
    \begin{equation*}
    \frac{\delta\bigl(\partial_1\theta(\sigma_t^\delta(x),\sigma_t^\delta(y))
        +\partial_2\theta(\sigma_t^\delta(x),\sigma_t^\delta(y))\bigr)}
    {\hat\sigma_t^\delta(x,y)^2}
    \to 0
    \qquad
    \text{as }\delta\to 0.
    \end{equation*}
Therefore the summand converges to zero as $\delta\to 0$ for every pair $(x,y)$.
Since $\al X$ is finite, we conclude that
\begin{equation}
\label{eq_R_pointwise_zero}
R_t^\delta\to 0
\qquad
\text{for almost every }t\in (0,1).
\end{equation}

We keep the domination estimate.
Using the Euler identity
$\partial_1\theta(s,t)s+\partial_2\theta(s,t)t=\theta(s,t)$ on $(0,\infty)^2$
and the bounds $\delta\le \sigma_t^\delta(x)$ and $\delta\le \sigma_t^\delta(y)$, we obtain
\begin{equation*}
\delta\bigl(\partial_1\theta(\sigma_t^\delta(x),\sigma_t^\delta(y))+\partial_2\theta(\sigma_t^\delta(x),\sigma_t^\delta(y))\bigr)
\le
\hat\sigma_t^\delta(x,y).
\end{equation*}
Substituting this into \eqref{eq_remainder_R} yields
\begin{equation*}
0\le R_t^\delta
\le
\frac{b^2}{4}\sum_{x,y\in \al X}\frac{V_t(x,y)^2}{\hat\sigma_t^\delta(x,y)}K(x,y)\varpi(x)
=
\frac{b^2}{2}A'(\sigma_t^\delta,V_t).
\end{equation*}
By the monotonicity of $\theta$, we have $\hat\sigma_t^\delta(x,y)\ge \hat\sigma_t(x,y)$, and hence
$A'(\sigma_t^\delta,V_t)\le A'(\sigma_t,V_t)$.
Therefore $R_t^\delta$ is dominated by the integrable function $(b^2/2)A'(\sigma_t,V_t)$.
Combining this domination with \eqref{eq_R_pointwise_zero}, the dominated convergence theorem gives
\begin{equation}
\label{eq_R_to_zero}
\lim_{\delta\to 0}\int_0^1 R_t^\delta \di t=0.
\end{equation}

Finally, combining \eqref{eq_decomposition_pairing}, \eqref{eq_identity_sigma_delta}, and \eqref{eq_R_to_zero} yields
\begin{align*}
\lim_{\delta\to 0}\int_0^1 \langle \dot\phi_t^{\delta},\sigma_t\rangle_\varpi \di t
&=
\lim_{\delta\to 0}\int_0^1 \langle \dot\phi_t^{\delta},\sigma_t^\delta\rangle_\varpi \di t
+
\lim_{\delta\to 0}\int_0^1 R_t^\delta\di t
\\
&=
-\frac{b^2}{2}\lim_{\delta\to 0}\int_0^1 A'(\sigma_t^\delta,V_t)\di t.
\end{align*}
Since $A'(\sigma_t^\delta,V_t)\uparrow A'(\sigma_t,V_t)$ as $\delta\to 0$ and
$A'(\sigma_t^\delta,V_t)\le A'(\sigma_t,V_t)$, we conclude that
\begin{equation*}
\lim_{\delta\to 0}\int_0^1 A'(\sigma_t^\delta,V_t)\di t
=
\int_0^1 A'(\sigma_t,V_t)\di t,
\end{equation*}
and \eqref{eq_target_identity_delta} follows.
Together with \eqref{eq_swap_eps_int}, this proves \eqref{eq_dowble_swap_int}.
\vspace{0.2cm}

\noindent$\circ$ \textbf{Part 3. Contribution of $\langle \Phi,V\rangle_\varpi$.}
Assuming $\int_0^1 A'(\sigma_t,V_t) \di t<\infty$,
    we finally identify the contribution of the $\langle \Phi,V\rangle_\varpi$ term.
The strategy is:
    for fixed $\delta>0$ we pass to the limit $\ep\to0$ in the pairing,
    using the strong convergence $\Phi^{\delta,\ep}\to\Phi^\delta$ in $\mathrm L^2([0,1];\mathscr G_{\mathbf 1})$;
    the resulting expression is explicit and equals $b^2A'(\sigma_t^\delta,V_t)$ pointwise in time;
    sending $\delta\downarrow0$ and applying monotone convergence then yields \eqref{eq_V_Phi_limit}.
We claim
    \begin{equation}
    \label{eq_V_Phi_limit}
    \lim_{\delta\to 0}\lim_{\ep\to 0}\int_0^1 \langle V_t,\Phi_t^{\delta,\ep}\rangle_\varpi \di t
    =
    b^2\int_0^1 A'(\sigma_t,V_t)\di t.
    \end{equation}
We first fix $\delta>0$ and pass to the limit $\ep\to 0$.
Since $\sigma_t^{\delta,\ep}\ge \delta\mathbf 1$,
    we have $\hat\sigma_t^{\delta,\ep}(x,y)\ge \delta$,
    and therefore $\hat\sigma_t^{\delta,\ep}(x,y)^{-1}\le \delta^{-1}$.
Moreover, the time convolution yields
    $\sigma_t^{\delta,\ep}\to \sigma_t^\delta$ uniformly in $t$,
    and $V^\ep\to V$ in $\mathrm L^2([0,1];\mathscr G_{\mathbf 1})$.
Because $\al X$ is finite and $\theta$ is continuous on $(0,\infty)^2$,
    we also have $\hat\sigma^{\delta,\ep}\to \hat\sigma^\delta$ uniformly on $[0,1]\times \al X\times \al X$.
Consequently,
    \begin{equation*}
    \Phi^{\delta,\ep}\to \Phi^\delta
    \qquad
    \text{in }\mathrm L^2([0,1];\mathscr G_{\mathbf 1}).
    \end{equation*}
Using Cauchy--Schwarz in time and in the $\langle\cdot,\cdot\rangle_\varpi$ inner product, we obtain
    \begin{equation*}
    \left|
    \int_0^1 \langle V_t,\Phi_t^{\delta,\ep}-\Phi_t^\delta\rangle_\varpi \di t
    \right|
    \le
    \left(\int_0^1 \|V_t\|_\varpi^2\di t\right)^{1/2}
    \left(\int_0^1 \|\Phi_t^{\delta,\ep}-\Phi_t^\delta\|_\varpi^2\di t\right)^{1/2},
    \end{equation*}
    and the right-hand side converges to $0$ as $\ep\to 0$.
Therefore,
    \begin{equation}
    \label{eq_eps_to_0_VPhi}
    \lim_{\ep\to 0}\int_0^1 \langle V_t,\Phi_t^{\delta,\ep}\rangle_\varpi \di t
    =
    \int_0^1 \langle V_t,\Phi_t^\delta\rangle_\varpi \di t.
    \end{equation}

We now compute $\langle V_t,\Phi_t^\delta\rangle_\varpi$.
By definition of the pairing,
    \begin{align*}
    \langle V_t,\Phi_t^\delta\rangle_\varpi
    &=
    \frac12\sum_{x,y\in \al X} V_t(x,y)\Phi_t^\delta(x,y)K(x,y)\varpi(x)
    \\
    &=
    \frac{b^2}{2}\sum_{x,y\in \al X}\frac{V_t(x,y)^2}{\hat\sigma_t^\delta(x,y)}K(x,y)\varpi(x)
    =
    b^2 A'(\sigma_t^\delta,V_t).
    \end{align*}
Combining this with \eqref{eq_eps_to_0_VPhi} yields
    \begin{equation}
    \label{eq_fixed_delta_identity}
    \lim_{\ep\to 0}\int_0^1 \langle V_t,\Phi_t^{\delta,\ep}\rangle_\varpi \di t
    =
    b^2\int_0^1 A'(\sigma_t^\delta,V_t)\di t.
    \end{equation}

It remains to pass to the limit $\delta\to 0$.
For a.e. $t$, the monotonicity of $\theta$ implies $\hat\sigma_t^\delta(x,y)\ge \hat\sigma_t(x,y)$,
    hence $A'(\sigma_t^\delta,V_t)\le A'(\sigma_t,V_t)$.
Moreover, for each fixed $(x,y)$,
    the quantity $1/\hat\sigma_t^\delta(x,y)$ increases as $\delta\downarrow 0$ and converges to $1/\hat\sigma_t(x,y)$.
On pairs with $\hat\sigma_t(x,y)=0$, finiteness of $A'(\sigma_t,V_t)$ implies $V_t(x,y)=0$,
    so the corresponding summand is identically $0$ for all $\delta>0$.
Therefore, for a.e. $t$ we have
    \begin{equation*}
    A'(\sigma_t^\delta,V_t)\uparrow A'(\sigma_t,V_t)
    \qquad
    \text{as }\delta\downarrow 0.
    \end{equation*}
Since $A'(\sigma_t^\delta,V_t)$ is nonnegative, the monotone convergence theorem gives
    \begin{equation}
    \label{eq_delta_MCT_Aprime}
    \lim_{\delta\to 0}\int_0^1 A'(\sigma_t^\delta,V_t)\di t
    =
    \int_0^1 A'(\sigma_t,V_t)\di t.
    \end{equation}
Finally, \eqref{eq_fixed_delta_identity} and \eqref{eq_delta_MCT_Aprime} imply \eqref{eq_V_Phi_limit}.
\vspace{0.2cm}

We combine $c=0$, \eqref{eq_dowble_swap_int} and \eqref{eq_V_Phi_limit} to derive \eqref{claim_step3.3} when $ \int_0^1 A'(\sigma_t, V_t) \di t <+\infty $.

\vspace{0.2cm}
\noindent$\circ$ \textbf{Part 4. If $\int_0^1 A'(\sigma_t, V_t) \di t =+\infty$, then $g^\ast_b(0,\sigma,V)=+\infty$.}
We now treat the remaining case $\int_0^1 A'(\sigma_t,V_t)\di t=+\infty$.
Here we do not seek an exact identification of $g_b^\ast(0,\sigma,V)$,
    but only show that it is infinite.
The strategy is:
    evaluate the support function of $\mathcal D^b$ on the same family of test pairs     $(\phi^{\delta,\ep},\Phi^{\delta,\ep})$,
    first letting $\ep\to0$ for fixed $\delta>0$ and then letting $\delta\downarrow0$,
    which yields a lower bound by $(b^2/2)\int_0^1 A'(\sigma_t^\delta,V_t)\di t$;
    since this integral diverges to $+\infty$ as $\delta\downarrow0$ by monotone convergence,
    we conclude that $g_b^\ast(0,\sigma,V)=+\infty$.

Assume that
\begin{equation}
\label{eq_assumption_Aprime_infty}
\int_0^1 A'(\sigma_t,V_t)\di t=+\infty.
\end{equation}
We prove that $g_b^\ast(0,\sigma,V)=+\infty$.

By the definition of the Legendre--Fenchel transform,
    \begin{equation}
    \label{eq_gb_star_sup_test_pair}
    g_b^\ast(0,\sigma,V)
    =
    \sup_{(\phi,\Phi)\in\mathcal D^b}
    \int_0^1\bigl(\langle \dot\phi_t,\sigma_t\rangle_\varpi+\langle \Phi_t,V_t\rangle_\varpi\bigr)\di t
    \ge
    \int_0^1\bigl(\langle \dot\phi_t^{\delta,\ep},\sigma_t\rangle_\varpi+\langle \Phi_t^{\delta,\ep},V_t\rangle_\varpi\bigr)\di t.
    \end{equation}
We next let $\ep\to 0$ for fixed $\delta$.
As the earlier convergence results \eqref{eq_swap_eps_int} and \eqref{eq_eps_to_0_VPhi} do not rely on $\int_0^1 A'(\sigma_t,V_t)\di t<+\infty$, 
    the convolution approximation yields
    \begin{equation*}
    \label{eq_eps_to_0_pairings}
    \lim_{\ep\to 0}\int_0^1 \langle \Phi_t^{\delta,\ep},V_t\rangle_\varpi\di t
    =
    \int_0^1 \langle \Phi_t^\delta,V_t\rangle_\varpi\di t,
    \qquad
    \lim_{\ep\to 0}\int_0^1 \langle \dot\phi_t^{\delta,\ep},\sigma_t\rangle_\varpi\di t
    =
    \int_0^1 \langle \dot\phi_t^\delta,\sigma_t\rangle_\varpi\di t.
    \end{equation*}
Taking $\liminf_{\ep\to 0}$ in \eqref{eq_gb_star_sup_test_pair} and using the above formulae, we obtain
    \begin{equation}
    \label{eq_gb_star_ge_limit_eps}
    g_b^\ast(0,\sigma,V)
    \ge
    \int_0^1\bigl(\langle \dot\phi_t^\delta,\sigma_t\rangle_\varpi+\langle \Phi_t^\delta,V_t\rangle_\varpi\bigr)\di t.
    \end{equation}
We now evaluate the right-hand side in \eqref{eq_gb_star_ge_limit_eps}.
By the definition of $\Phi_t^\delta$,
    \begin{equation*}
    \label{eq_V_Phi_delta_identity}
    \langle \Phi_t^\delta,V_t\rangle_\varpi=b^2 A'(\sigma_t^\delta,V_t).
    \end{equation*}
Moreover, the decomposition \eqref{eq_decomposition_pairing} established earlier yields
    \begin{equation*}
    \label{eq_phi_dot_sigma_decomposition_delta}
    \langle \dot\phi_t^\delta,\sigma_t\rangle_\varpi
    =
    -\frac{b^2}{2}A'(\sigma_t^\delta,V_t)+R_t^\delta,
    \qquad
    R_t^\delta\ge 0.
    \end{equation*}
Combining these equations gives
    \begin{equation}
    \label{eq_gb_star_lower_bound_delta}
    g_b^\ast(0,\sigma,V)
    \ge
    \int_0^1\left(\frac{b^2}{2}A'(\sigma_t^\delta,V_t)+R_t^\delta\right)\di t
    \ge
    \frac{b^2}{2}\int_0^1 A'(\sigma_t^\delta,V_t)\di t.
    \end{equation}
Finally, we let $\delta\downarrow 0$.
For a.e. $t\in(0,1)$, the monotonicity of $\theta$ implies $\hat\sigma_t^\delta(x,y)\downarrow \hat\sigma_t(x,y)$ for each pair $(x,y)$,
and hence
\begin{equation*}
\label{eq_Aprime_monotone_delta}
A'(\sigma_t^\delta,V_t)\uparrow A'(\sigma_t,V_t)
\qquad
\text{as }\delta\downarrow 0.
\end{equation*}
Since $A'(\sigma_t^\delta,V_t)\ge 0$, the monotone convergence theorem and \eqref{eq_assumption_Aprime_infty} yield
    \begin{equation}
    \label{eq_Aprime_integral_to_infty}
    \lim_{\delta\downarrow 0}\int_0^1 A'(\sigma_t^\delta,V_t)\di t
    =
    \int_0^1 A'(\sigma_t,V_t)\di t
    =
    +\infty.
    \end{equation}
Combining \eqref{eq_gb_star_lower_bound_delta} with \eqref{eq_Aprime_integral_to_infty} shows that $g_b^\ast(0,\sigma,V)$ is unbounded above,
    and therefore $g_b^\ast(0,\sigma,V)=+\infty$.

\bibliographystyle{plain}
\bibliography{ref}

\end{document}